\ExplSyntaxOn
\str_clear:N \c_sys_backend_str
\ExplSyntaxOff
\documentclass[pdflatex,sn-mathphys-ay]{sn-jnl}

\usepackage{tikz}%
\usetikzlibrary{calc}%
\usepackage{graphicx}%
\usepackage{amsmath,amssymb,amsfonts}%
\usepackage{amsthm}%
\usepackage{mathrsfs}%
\usepackage[title]{appendix}%
\usepackage{textcomp}%
\usepackage{manyfoot}%
\usepackage{booktabs}%
\usepackage{listings}%
\usepackage{geometry}%
\usepackage{ulem}%
\usepackage{here}%
\usepackage{stmaryrd}%
\usepackage{arydshln}%
\usepackage{mathtools}%
\usepackage{bm}%
\usepackage{lscape}%
\usepackage{enumitem}%
\usepackage{natbib}%
\usepackage{contour}%
\contourlength{0.05em}

\theoremstyle{thmstyleone}
\newtheorem{thm}{Theorem}
\newtheorem{proposition}{Proposition}
\newtheorem{lema}{Lemma}
\newtheorem{corollary}{Corollary}
\newtheorem{conj}{Conjecture}

\theoremstyle{thmstyletwo}
\newtheorem{ex}{Example}
\newtheorem{ques}{Question}
\newtheorem{rem}{Remark}
\newtheorem*{ntt}{Notation}

\theoremstyle{thmstylethree}
\newtheorem{definition}{Definition}
\newtheorem{defntt}{Definition-Notation}
\newtheorem{conv}{Convention}

\DeclareMathOperator*{\mex}{mex}

\newcommand{\exw}{1.2}
\newcommand{\egw}{0.52}
\newcommand{\epw}{1.6}

\newcommand{\repw}{0.7}

\newcommand{\haa}{6.8}
\newcommand{\hcc}{19/8*1.5+\repw+4/8*\haa}

\newcommand{\fgx}{11.6}

\newcommand{\aaa}{4.6}
\newcommand{\bbb}{5.98}
\newcommand{\bbc}{6.44}
\newcommand{\bbd}{7.36}
\newcommand{\mmm}{\aaa+\bbb}
\newcommand{\mmc}{\aaa+\bbc}
\newcommand{\mmd}{\aaa+\bbd}

\newcommand{\ext}{2.8}
\newcommand{\exa}{\ext*sqrt(11)}
\newcommand{\exb}{\ext*sqrt(23)}

\newcommand{\cod}[2]{\draw[fill=black] ({#1},2) circle[radius=.04]
node [above] {$#2$};}

\newcommand{\codd}[2]{\draw[fill=black] ({#1},0) circle[radius=.04]
node [above] {$#2$};}
\newcommand{\coddd}[2]{\draw[fill=black] ({#1},-2) circle[radius=.04]
node [above] {$#2$};}

\newcommand{\gbox}[4]
{\draw ({#2},{2-.5*#1}) -- ({#2},{2.5-.5*#1}) [rounded corners=6pt] -- ({#3},{2.5-.5*#1}) -- ({#3},{2-.5*#1}) -- ({#2},{2-.5*#1});
\node at ({#2},{2.25-.5*#1}) [right] {$#4$};}

\newcommand{\gboxs}[4]
{\draw ({#2},{2-.5*#1}) -- ({#2},{2.5-.5*#1}) [rounded corners=6pt] -- ({#3},{2.5-.5*#1}) -- ({#3},{2-.5*#1}) -- ({#2},{2-.5*#1});
\node at ({#2-.08},{2.25-.5*#1}) [right] {$#4$};}
\newcommand{\gboxx}[4]
{\draw ({#3},{2-.5*#1}) -- ({#3},{2.5-.5*#1}) [rounded corners=6pt] -- ({#2},{2.5-.5*#1}) -- ({#2},{2-.5*#1}) -- ({#3},{2-.5*#1});
\node at ({#2+.2},{2.25-.5*#1}) [right] {$#4$};}
\newcommand{\gboxxx}[4]
{\draw ({#2},{2-.5*#1}) -- ({#2},{2.5-.5*#1}) -- ({#3},{2.5-.5*#1}) -- ({#3},{2-.5*#1}) -- ({#2},{2-.5*#1});
\node at ({#2+.2},{2.25-.5*#1}) [right] {$#4$};}

\newcommand{\lper}[2]
{\draw[line width=.08cm] ({#2+.04},{2-.5*#1}) -- ({#2+.04},{2.5-.5*#1});}
\newcommand{\lperr}[2]
{\draw[line width=.08cm] ({#2+.16},{2.46-.5*#1}) [rounded corners=6pt] -- 
({#2+.02},{2.46-.5*#1}) -- ({#2+.02},{2.04-.5*#1}) -- ({#2+.16},{2.04-.5*#1});}

\newcommand{\rper}[2]
{\draw[line width=.08cm] ({#2-.16},{2.46-.5*#1}) [rounded corners=6pt] -- 
({#2-.02},{2.46-.5*#1}) -- ({#2-.02},{2.04-.5*#1}) -- ({#2-.16},{2.04-.5*#1});}
\newcommand{\rperr}[2]
{\draw[line width=.08cm] ({#2-.04},{2-.5*#1}) -- ({#2-.04},{2.5-.5*#1});}

\renewcommand{\theenumi}{\arabic{enumi}}

\makeatletter
\renewcommand{\p@enumii}{}
\makeatother

\newtagform{bbracket}{$\langle$}{$\rangle$}
\usetagform{bbracket}

\begin{document}

\title[Article Title]{Subtraction Nim with Continuous Parameters}

\author*[1]{\fnm{Yuto} \sur{Moriwaki}}\email{d252214@hiroshima-u.ac.jp}

\affil*[1]{\orgdiv{Department of Mathematics}, \orgname{Hiroshima University}, 
\orgaddress{
%\street{xx}, 
\city{Hiroshima}, 
%\postcode{xx}, \state{xx}, 
\country{Japan}}}

\abstract{When $S$ is a finite set of positive integers, we can consider classical Subtraction Nim with $S$ as the set of removable numbers. Even when $S$ consists of three elements, many questions remain unanswered. For example, we do not have a period formula of the Nim value.
In this paper, we generalize $S$ to be a finite set of positive real numbers. We found that in some regions, we can give concrete formulae for the period and the Nim value function. In particular when $S$ consists of three elements, we found sufficient conditions for the Nim value function to be purely periodic with the period which is equal to the sum of two of elements of $S$. To be more precise, let $S=\{a,b,c\}$ with $0<a<b<c$, then for example when $a\leq b\leq 2a$ with $a+b\geq c$, the Nim value function is purely periodic with a period $a+c$. There are much more regions with precise period formulae. We have also some generalizations for the cases $|S|\geq4$. Even when $S$ consists of integers, these results seem to be new.}

\keywords{Subtraction Nim, Nim value, period, preperiod, continuous parameters}

\maketitle
\bmhead{Acknowledgements}

This work is supported by JST SPRING, Grant Number JPMJSP2132.

\section{Introduction}

The classical Subtraction Nim is a heap game with a single heap, where a finite set $S$ of positive integers is given and the player takes $s\in S$ tokens from the heap. We consider the normal play, where two players play alternately and the one who cannot play loses. There is an algorithm to calculate the Nim value $g_S(n)$ with given $S$ and $n$, the size of the heap. It is known that the sequence of Nim values $g_S(0), g_S(1), g_S(2),\ldots$ is ultimately periodic, that is to say, there exists $n_0$ and $p>0$ such that for all $n\geq n_0$, the Nim values satisfy $g_S(n+p)=g_S(n)$. However, the formulae to calculate such $p$ and $n_0$ from $S$ directly are yet unknown, even with the condition $|S|=3$. In particular, the condition for $n_0=0$ (namely the condition for pure periodicity) seem to be hardly founded. See \citep{flammenkamp1997lange} for some results when $|S|\geq4$.

In this paper, we consider the Subtraction Nim with $S$ consisting of any positive real numbers, not necessarily integers. We especially examine the case $S=\{a,b,c\}$ (three elements) with the conditions $0<a<b<c$ and additionally $c=1$, as we can rescale. This generalization enable us to take Subtraction Nim not as a heap game, but as a ``string game,'' where a single string is given and the player trims length $s\in S$ off the string from an end. We do have a complicated example for general $S$ (Example~\ref{ex:1r2r6}), but on the other hand, we observed the phenomenon that for $(a,b)$ in some particular regions, the Nim value function turns to be purely periodic and its period is also accurately calculatable (Theorem~\ref{th:necktie}).

The idea for the proof is to list all the possible points where the Nim value may change, calling each of them a break, and get the Nim value between adjacent breaks in ascending order. It is just the same as the ordinary algorithm to calculate the Nim value, but we proceed the calculation with parameters, so each result is for all of $S$ corresponding to each region. \citep{ward2016conjecture} conjectured that for the Subtraction Nim with $S=\{a,b,c\}$ which satisfies $0<a<b<c$ and $c\neq a+b$ the Nim value function has a period equal to either of $a+b$, $b+c$ or $c+a$, and our result supports it for infinitely many cases.

\begin{ntt}
\ 
\begin{enumerate}
\item When $S$ is a subset of $\mathbb{R}_{>0}$, then we let $S_{\leq x}\coloneqq \{s\in S\mid s\leq x\}$.
\item When $x\in\mathbb{R}$, then $\lfloor x\rfloor $ is the largest integer $ \leq x$ and $\lceil x\rceil $ is the least integer $ \geq x$.
\item The notation $ [ x_n]_n$ means a sequence of real numbers $x_0,x_1,x_2,\ldots$ with the index $ n\in \mathbb{Z}_{\geq 0}$.
\item When $ b\in \mathbb{R}$ and $ a\in \mathbb{R} _{>0}$, then $b\bmod a$ is the remainder of $b$ divided by $a$, namely
\begin{equation*}
 b\bmod a=b-\left\lfloor\frac{b}{a}\right\rfloor a.
\end{equation*}

\end{enumerate}

\end{ntt}

\section{Preparation}

\begin{definition}

Let $S$ be a nonempty finite subset of $\mathbb{R}_{>0}$. We call the Subtraction Nim with the set of removable numbers $S$ as \underline{Continim} $G(S)$, a shorthand for ``continuous nim''. We consider the normal play in this paper. $g_S:\mathbb{R}_{\geq 0}\rightarrow \mathbb{Z}_{\geq 0}$, or simply $ g$, denotes the Nim value function of Continim $ G(S)$, and we call each value $g_S(x)$ as the Nim value of the position $x\in\mathbb{R}_{\geq 0}$. 

\end{definition}

\begin{proposition}\label{prop:fg}

Let $S$ be a nonempty finite subset of $\mathbb{R}_{>0}$. If a function $ f:\mathbb{R}_{\geq 0}\rightarrow \mathbb{Z}_{\geq 0}$ satisfies
\begin{equation*}
 f( x) =\mex\{f( x-s) \mid s\in S_{\leq x}\} \ ( x\in \mathbb{R}_{\geq 0}),
\end{equation*}
then $ f$ is identical to $ g_S$, the Nim value function of Continim $ G(S)$.

\end{proposition}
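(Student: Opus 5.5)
The plan is to show that any two functions satisfying the mex recursion agree everywhere. Since $g_S$ itself satisfies it by the Sprague--Grundy theory, this gives $f=g_S$. We first have to make sure $g_S$ is well defined. A position $x$ can only move to $x-s$ with $s\in S_{\leq x}$, and every move lowers the position by at least $m:=\min S>0$. So any play starting from $x$ ends after at most $\lfloor x/m\rfloor$ moves. Hence the game is short, and the Nim value $g_S(x)=\mex\{g_S(x-s)\mid s\in S_{\leq x}\}$ is well defined by the usual recursion on game positions. In particular $g_S$ is one function satisfying the hypothesis.

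The main step is uniqueness, proved by induction on $n=\lfloor x/m\rfloor$. Let $f,h$ both satisfy the recursion, and let $P(n)$ be the statement that $f(x)=h(x)$ for all $x\in[0,(n+1)m)$.

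For the base case $n=0$, take $x<m$. Then $S_{\leq x}=\emptyset$, so $f(x)=\mex\emptyset=0=h(x)$.

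For the inductive step, assume $P(n-1)$ and let $x\in[nm,(n+1)m)$. For every $s\in S_{\leq x}$ we have $0\leq x-s\leq x-m<nm$. By $P(n-1)$, $f(x-s)=h(x-s)$ for each such $s$. The two mex sets are therefore equal, so $f(x)=h(x)$.

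Since the intervals $[0,(n+1)m)$ exhaust $\mathbb{R}_{\geq0}$, we get $f=h$. Taking $h=g_S$ finishes the proof.

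The only real obstacle is that the domain is a continuum, so we cannot induct on $x$ directly. The fix is to induct on the integer $\lfloor x/m\rfloor$, which works because every move drops the position by at least $m$. No other subtlety arises, because $S$ is finite, so each mex is taken over a finite set and is well defined.
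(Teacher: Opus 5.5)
Your proof is correct and matches the paper's: both induct on $\lfloor x/\min S\rfloor$, use $S_{\leq x}=\emptyset$ for the base case, and use the fact that each move lowers this index in the inductive step. The only difference is presentation. You prove uniqueness for any two solutions and then set $h=g_S$ (after checking $g_S$ is well defined), while the paper compares $f$ with $g_S$ directly.
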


\begin{proof}

Let $S=\{s_1,s_2,\ldots,s_r\}$ with $0<s_1<s_2<\cdots<s_r$ and we proceed by induction on $k\coloneqq\displaystyle\left\lfloor\frac{x}{s_1}\right\rfloor$. When $k=0$, then $x<s_1$ is a terminal position, and
\begin{equation*}
f(x)=\mex\emptyset=0=g_S(x).
\end{equation*}
When $k>0$, by inductive assumption, we may assume that $f(y)=g_S(y)$ holds for $y=x-s$ with $s\in S_{\leq x}$. Then
\begin{align*}
f(x)&=\mex\{f(x-s)\mid s\in S_{\leq x}\}\\
    &=\mex\{g_S(x-s)\mid s\in S_{\leq x}\}\\
    &=g_S(x).
\end{align*}

\end{proof}

\begin{proposition}\label{prop:ord}

Let $S$ be a finite subset of $\mathbb{Z}_{>0}$, $g:\mathbb{R}_{\geq 0}\rightarrow \mathbb{Z}_{\geq 0}$ be the Nim value function of Continim, and $g_0:\mathbb{Z}_{\geq 0}\rightarrow \mathbb{Z}_{\geq 0}$ be that of the classical Subtraction Nim with the same set of removable numbers $S$. Then we have
\begin{equation*}
g(x)=g_0(\lfloor x\rfloor)\ (x\in\mathbb{R}_{\geq 0}).
\end{equation*}

\end{proposition}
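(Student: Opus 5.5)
The plan is to apply Proposition~\ref{prop:fg} to the candidate function $f(x)\coloneqq g_0(\lfloor x\rfloor)$ for $x\in\mathbb{R}_{\geq 0}$. By that proposition, it is enough to check that $f$ satisfies
\begin{equation*}
f(x)=\mex\{f(x-s)\mid s\in S_{\leq x}\}\quad(x\in\mathbb{R}_{\geq 0}),
\end{equation*}
because then $f=g_S=g$ and the claim follows at once. No induction is needed beyond the one already built into Proposition~\ref{prop:fg}.

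The key step is the following elementary observation, which uses that every $s\in S$ is an integer. Fix $x\geq 0$ and write $n\coloneqq\lfloor x\rfloor$. For an integer $s$ we have $s\leq x$ if and only if $s\leq n$, so $S_{\leq x}=S_{\leq n}$. Moreover $\lfloor x-s\rfloor=\lfloor x\rfloor-s=n-s$, since subtracting an integer commutes with the floor function. Hence
\begin{equation*}
\{f(x-s)\mid s\in S_{\leq x}\}=\{g_0(n-s)\mid s\in S_{\leq n}\}.
\end{equation*}

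Taking $\mex$ of both sides, the right-hand side is exactly $g_0(n)$ by the defining recursion of the classical Nim value function. This recursion is just the integer version of the one in Proposition~\ref{prop:fg}, namely the mex over positions reachable by one move. Therefore $\mex\{f(x-s)\mid s\in S_{\leq x}\}=g_0(n)=f(x)$, which is the required identity. Note also that $S$ is nonempty, as Proposition~\ref{prop:fg} assumes. If $S=\emptyset$ were allowed, both functions would be identically $0$ and the statement would hold trivially.

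I do not expect a genuine obstacle. The only point needing care is the equality $S_{\leq x}=S_{\leq\lfloor x\rfloor}$ together with the identity $\lfloor x-s\rfloor=\lfloor x\rfloor-s$, and both rely on the integrality of the elements of $S$. This integrality is exactly the hypothesis that distinguishes the present statement from the general real case.
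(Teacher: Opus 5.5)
Your proposal is correct and is essentially the paper's own proof. The paper also takes $f(x)=g_0(\lfloor x\rfloor)$, uses the integrality of $S$ to rewrite $\mex\{g_0(\lfloor x\rfloor-s)\mid s\in S_{\leq\lfloor x\rfloor}\}$ as $\mex\{g_0(\lfloor x-s\rfloor)\mid s\in S_{\leq x}\}$, and concludes via Proposition~\ref{prop:fg}.
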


\begin{proof}

By the definition of the Nim value,
\begin{align*}
g_0(\lfloor x\rfloor)
&=\mex\{g_0(\lfloor x\rfloor -s)\mid s\in S_{\leq \lfloor x\rfloor}\}\\
&=\mex\{g_0(\lfloor x-s \rfloor)\mid s\in S_{\leq x}\}\ (x\geq 0).
\end{align*}
Thus the function $f(x)\coloneqq g_0(\lfloor x\rfloor)$ satisfies
\begin{equation*}
f(x)=\mex\{f(x-s)\mid s\in S_{\leq x}\}\ (x\geq 0).
\end{equation*}
By Proposition~\ref{prop:fg}, we get $f=g$.

\end{proof}

We get $ g_{0} =g|_{\mathbb{Z}_{\geq 0}}$ by Proposition~\ref{prop:ord}, so we will write just $G(S)$ to consider Continim.

\begin{definition}\label{df:period}

A function $ f:\mathbb{R}_{\geq 0}\rightarrow \mathbb{Z}_{\geq 0}$ is said to be \underline{periodic} if there exist $x_0\in \mathbb{R}_{\geq 0}$ and $p\in \mathbb{R}$ such that the equality $f(x+p)=f(x)\ (x\geq x_0)$ holds. We call $ p$ (resp. $ x_{0}$) a \underline{period} (resp. \underline{preperiod}) of $ f$, and each 
\begin{equation*}
f|_{[x_0,x_0+p)},f|_{[x_0+p,x_0+2p)},f|_{[x_0+2p,x_0+3p)},\ldots
\end{equation*}
a \underline{loop} of $f$. The minimality is not required for them, and if $p$ is minimal among the periods of $f$, then we call $p$ the \underline{minimum period} of $f$. The same for the preperiod, and if the minimum preperiod is $0$, then we call the function $f$ to be \underline{purely periodic} and a period $p$ to be a \underline{pure period} of $f$. 

\end{definition}

For the classical Subtraction Nim, the Nim values can be calculated by induction, and the same for the Nim value of each position in Continim. But it cannot be the way to find the Nim value function of Continim. Alternately, we introduce ``breaks''.

\begin{definition}

For given Continim $ G( S)$, we call each value $ \displaystyle\sum\limits _{s\in S} a_{s} s$ with some $ a_{s} \in \mathbb{Z}_{\geq 0}$ to be a \underline{break}. We index the breaks as $ 0=x_{0} < x_{1} < x_{2} < \cdots $ in the usual order of $ \mathbb{R}_{\geq 0}$.

Each break $ x_{n}$ can also be gained by induction, in the sense of calculating the Nim value, as follows:

$ x_{0} \coloneqq 0$. When $ x_{0} ,x_{1} ,\ldots ,x_{n-1}$ gained, let
\begin{equation*}
C( n) \coloneqq \{s+x_{l} \mid s\in S;\ l=0,1,\ldots ,n-1\},
\end{equation*}
then $ x_{n}$ is the least element of $ C( n)$ such that $>x_{n-1}$, i.e., $ x_{n} =\min C( n)_{ >x_{n-1}}$. (We call $ C( n)_{ >x_{n-1}}$ as $ n$th candidate-set.)

\end{definition}

\begin{thm} \label{th:const}

For $ j=1,2,3,\ldots $, the Nim value function $g$ of $G(S)$ is constant in $ [ x_{j-1} ,x_{j})$.

\end{thm}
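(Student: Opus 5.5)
We prove the statement by strong induction on $j$, using the recursion $g(x)=\mex\{g(x-s)\mid s\in S_{\leq x}\}$ that characterizes $g$ by Proposition~\ref{prop:fg}. First we record two facts about the set $B$ of breaks. It is closed under adding elements of $S$. It is also discrete: any $x$ has only finitely many representations $\sum_s a_s s\le x$, since $a_s\le x/\min S$, so the breaks really form an increasing sequence $x_0<x_1<\cdots$ tending to infinity. The key observation is the following. Suppose $x,y\in[x_{j-1},x_j)$ and $s\in S$. Then $s\le x$ if and only if $s\le y$. Indeed, $s$ itself is a break ($a_s=1$, the other coefficients $0$), so $s\le x_{j-1}$ or $s\ge x_j$.

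The induction is on $j$. For $j=1$ we have $x_1=\min S$, so every $x\in[0,x_1)$ is terminal and $g(x)=\mex\emptyset=0$. For the inductive step, fix $x,y\in[x_{j-1},x_j)$ with $x<y$. By the observation, the same set $S_{\leq x}=S_{\leq y}$ of moves is available from both positions. For $s$ in this set, $x-s$ and $y-s$ lie in $[0,x_j-s)$, and we claim that both lie in a single interval $[x_{i-1},x_i)$ with $i\le j-1$. Suppose instead that some break $x_m$ satisfies $x-s<x_m\le y-s$. Then $x_m+s$ is a break with $x<x_m+s\le y<x_j$, and $x_m+s>x\ge x_{j-1}$. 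This is a break strictly between $x_{j-1}$ and $x_j$, which is impossible. Also $y-s<x_j-s\le x_{j-1}$ when $s\le x_{j-1}$, because $x_{j-1}-s\ge0$ is not automatically a break, but $x_j-s\le x_{j-1}$ still needs care. The cleaner route is to note that $x-s$ and $y-s$ both lie in $[0,x)$, and $g$ on $[0,x)$ is already handled by the induction hypothesis restricted to intervals $[x_{i-1},x_i)$ with $x_{i-1}<x_{j-1}$. An interval containing points $<x_{j-1}$ must have $i\le j-1$, since otherwise $x_{i-1}\ge x_{j-1}$. Hence $g(x-s)=g(y-s)$ by the induction hypothesis.

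Therefore the two option sets coincide, $\{g(x-s)\}_{s\in S_{\leq x}}=\{g(y-s)\}_{s\in S_{\leq y}}$, and taking mex gives $g(x)=g(y)$.

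The main obstacle is the bookkeeping in the middle step. Showing that $x-s$ and $y-s$ fall in the same interval uses closure of the breaks under $+s$. Showing that this interval lies strictly before $[x_{j-1},x_j)$ follows from $x-s<x_{j-1}$ when $s\ge$ some break, that is, when $s\ge x_1=\min S$ is positive, together with the fact that $x-s\ge0$ lies in some $[x_{i-1},x_i)$. If $x-s\ge x_{j-1}$, then $x-s$ and $y-s$ would lie in the current interval itself. To avoid that case, we organize the induction instead as induction on the position $x$: we prove that $g$ is constant on $[x_{j-1},x)$ for all $x$, and the strong induction on $j$ then covers only $x-s<x_{j-1}$. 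This holds whenever $x<x_j\le x_{j-1}+s$, which is true because $x_{j-1}+s$ is a break exceeding $x_{j-1}$.
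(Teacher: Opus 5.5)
Your argument is correct and is essentially the paper's: induction on $j$, with closure of the break set under adding $s$ showing that each translate $[x_{j-1},x_j)-s$ lies inside one earlier interval $[x_{i-1},x_i)$ with $i\le j-1$. The paper gets this directly from the candidate $s+x_{j_i}\ge x_j$; you get it from ``no break in $(x-s,y-s]$'' together with $x_j\le x_{j-1}+s$, and you also make explicit that $S_{\le x}$ is constant on the interval, which the paper leaves implicit. The middle detour (``$g$ on $[0,x)$ is already handled'', ``induction on the position $x$'') is unnecessary and misleading, because your final observation $x_j\le x_{j-1}+s$ already places $x-s$ and $y-s$ below $x_{j-1}$. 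State that inequality up front and drop the rest.
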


\begin{proof}

Let $S=\{s_1,s_2,\ldots,s_r\}$ with $0<s_1<s_2<\cdots<s_r$ and we proceed by induction on $j$. For $j=0$, we have $x_0=0$ and $x_1=s_1$. The position $ x<x_1$ is terminal, so $g(x)=0$: constant in $[x_0,x_1)$. When $j>0$, by inductive assumption, we may assume that $g$ is constant in $[x_{k-1},x_k)$ for $k\leq j-1$. For any $ x\in [ x_{j-1} ,x_j)$ and $ s_i\leq x$, we have 
\begin{equation*}
x-s_i\in[x_{j-1}-s_i,x_j-s_i).
\end{equation*}
Let $j_i$ be the index such that
\begin{equation*}
x_{j_i-1}\leq x_{j-1}-s_i<x_{j_i},
\end{equation*}
then necessarily
\begin{equation*}
j_i\leq j-1,
\end{equation*}
and
\begin{equation*}
C(j)\ni s_i+x_{j_i}>x_{j-1}.
\end{equation*}
By the inducive definition of $x_j$, we get
\begin{align*}
    x_{j}&{}\leq s_i+x_{j_i},\\
x_{j}-s_i&{}\leq x_{j_i}.
\end{align*}
Hence
\begin{equation*}
[x_{j-1}-s_i,x_j-s_i)\subset[x_{j_i-1},x_{j_i}),
\end{equation*}
and then each $g(x-s_i)$ is constant for $x\in[x_{j-1},x_j)$.
 
\end{proof}

\begin{rem}

By Theorem~\ref{th:const}, the sequence of breaks $ [x_n]_n$ together with the data $g(x_n)\ (n=0,1,2,\ldots)$ is enough to determine the Nim value function. Theorem~\ref{th:const} also implies that the Nim value function $g$ is an upper semi-continuous step function, i.e., for any $x\in \mathbb{R}_{\geq 0}$ there exists $\varepsilon>0$ such that the equality $g(y)=g(x)\ (y\in [x,x+\varepsilon))$ holds,
but it does not imply the discontinuousness at $x=x_1,x_2,\ldots$.

\end{rem}

\begin{definition}

We call a break $ y$ to be a \underline{true break} if $ g$ is not continuous at $ y$. We index the true breaks as $ 0=y_{0} < y_{1} < y_{2} < \cdots $ in the usual order of $ \mathbb{R}_{\geq 0}$.

\end{definition}

\begin{rem}

$ S$ is nonempty for $ G( S)$ and every play changes the Nim value of the position, so there exist infinitely many true breaks.

\end{rem}

\begin{ex}\label{ex:1r2r6}

Let $ S\coloneqq \{1,\sqrt{2} ,\sqrt{6}\}$. The Nim value function of $ G( S)$ is shown in Fig.~\ref{fig:1r2r6} as a step graph.
\begin{figure}[H]
\centering
\begin{tikzpicture} % Used: Mathematica and TikZ
\node at (5.65,.735) {\includegraphics[width=.9\linewidth]{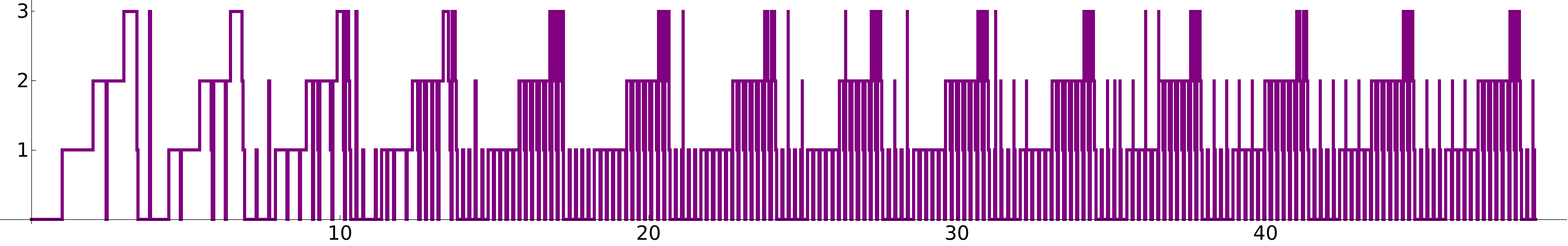}};
\draw [line width=0.6pt] (0,-.2) [->] --(0,1.8);
\draw [line width=0.6pt] (-.3,0) [->] --(11.6,0);
\node at (0,0) [below left] {$0$};
\node at (0,1.8) [above] {$g(x)$};
\node at (11.6,0) [right] {$x$};
\end{tikzpicture}
\caption{The Nim value function of $G(S)$ where $S=\{1,\sqrt{2},\sqrt{6}\}$.}\label{fig:1r2r6}
\end{figure}
Each vertical line indicates a true break. The minimum preperiod is equal to the 413th true break
\begin{equation*}
y_{413} =1+13\sqrt{2} +7\sqrt{6} \approx 36.531,
\end{equation*}
and the minimum period is $ 1+\sqrt{6}$.

\end{ex}

\begin{definition}

Let $ [ y_{n}]_{n}$ be the sequence of true breaks of $ G( S)$. For each $ n=0,1,2,\ldots $, take $ n'$ as the minimum such that $ y_{n'} -y_{n} \geq \max S$. Then we introduce an $ n$th \underline{period seed}
\begin{equation*}
\bm{Y}_{n} \coloneqq \begin{pmatrix}
y_{n+1} -y_{n} & y_{n+2} -y_{n+1} & \cdots  & y_{n'} -y_{n'-1}\\
g( y_{n}) & g( y_{n+1}) & \cdots  & g( y_{n'-1})
\end{pmatrix}.
\end{equation*}
For ease, let $ d_{i} \coloneqq y_{i+1} -y_{i}$. Then $\bm{Y}_n=\begin{pmatrix}
d_n & d_{n+1} & \cdots & d_{n'-1}\\
g(y_n) & g(y_{n+1}) & \cdots & g(y_{n'-1})
\end{pmatrix}$.

\end{definition}

\begin{proposition}

If there exist $ n_{1} ,n_{2} \in \mathbb{Z}_{\geq 0} \ ( n_{1} < n_{2})$ with $ \bm{Y}_{n_{1}} =\bm{Y}_{n_{2}}$, then the Nim value function of $ G(S)$ is periodic with a period $p=y_{n_2}-y_{n_1}$ and a preperiod $y_{n_1}$.

\end{proposition}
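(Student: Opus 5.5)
We need to show that $g(x+p)=g(x)$ for all $x\ge y_{n_1}$, where $p=y_{n_2}-y_{n_1}$. Throughout, $M\coloneqq\max S$.

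\textbf{Step 1: rephrase the hypothesis.} The equality $\bm{Y}_{n_1}=\bm{Y}_{n_2}$ means two things. The gaps $d_{n_1+k}=d_{n_2+k}$ agree for $0\le k<n_1'-n_1$, and the two matrices have the same number of columns, so $n_1'-n_1=n_2'-n_2$. The Nim values $g(y_{n_1+k})=g(y_{n_2+k})$ also agree. By Theorem~\ref{th:const}, $g$ is constant on each $[y_i,y_{i+1})$, because true breaks are in particular breaks and $g$ is right-continuous. Hence the equalities of gaps and values translate into
\begin{equation*}
g(x+p)=g(x)\quad (x\in[y_{n_1},y_{n_1'})).
\end{equation*}
By the definition of $n_1'$, the interval $[y_{n_1},y_{n_1'})$ has length at least $M$.

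\textbf{Step 2: extend by induction on $x$.} Suppose $g(x+p)=g(x)$ is known on $[y_{n_1},X)$ with $X\ge y_{n_1}+M$. We extend it to $[X,X+\varepsilon)$ for some $\varepsilon>0$. For $x\ge X$, every $s\in S$ satisfies $s\le M\le x$, and also $s\le x+p$, so all moves are legal from both $x$ and $x+p$. Moreover $x-s\ge X-M\ge y_{n_1}$, so $x-s$ lies in the region where periodicity is already established, as long as $x-s<X$. This holds if we take $\varepsilon\le\min S$, and then every $x\in[X,X+\varepsilon)$ has $x-s<X$ for all $s\in S$. So $g(x+p-s)=g(x-s)$ for all $s\in S$, and the mex recursion (the defining property used in Proposition~\ref{prop:fg}) gives
\begin{equation*}
g(x+p)=\mex\{g(x+p-s)\mid s\in S\}=\mex\{g(x-s)\mid s\in S\}=g(x).
\end{equation*}

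\textbf{Step 3: conclude without a transfinite issue.} Since each extension step has the fixed length $\min S$, repeating Step 2 covers $[y_{n_1},y_{n_1}+M+k\min S)$ for every $k$, hence all of $[y_{n_1},\infty)$. Equivalently, one can run an ordinary induction on $\lfloor (x-y_{n_1})/\min S\rfloor$, as in the proof of Proposition~\ref{prop:fg}. This yields periodicity with period $p$ and preperiod $y_{n_1}$.

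\textbf{Main obstacle.} The delicate part is Step 1: converting the matrix equality into equality of $g$ on a full interval of length at least $M$. This requires checking that the gaps line up so that $y_{n_1+k}+p=y_{n_2+k}$. That follows by summing the equal gaps, which telescopes. It also requires that $g$ is constant between consecutive true breaks, not merely between consecutive breaks. That holds because a non-true break is a point of continuity of a step function, so $g$ does not change there.
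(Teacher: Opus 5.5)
Your proof is correct and follows the same route as the paper. You telescope the equal gaps to get $y_{n_2+k}=y_{n_1+k}+p$, hence $g(x+p)=g(x)$ on $[y_{n_1},y_{n_1'})$, and then extend by the mex recursion in steps of $\min S$. You also start the extension only at $X\ge y_{n_1}+\max S$, so that every $x-s$ lies past $y_{n_1}$ and $S_{\le x}=S_{\le x+p}=S$; this makes explicit a point the paper's induction on $\lfloor (x-y_{n_1})/s_1\rfloor$ leaves implicit.
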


\begin{proof}

Let $S=\{s_1,s_2,\ldots,s_r\}$ with $0<s_1<s_2<\cdots<s_r$ and we prove
\begin{equation*}
g(x+p)=g(x)\ (x\geq y_{n_1})
\end{equation*}
by induction on $k=\left\lfloor \displaystyle\frac{x-y_{n_1}}{s_1}\right\rfloor $. Let $n'=(n_1)'$ for $n=n_1$. When $k=0$, we have 
\begin{align*}
y_{n_2+i}&=y_{n_2}+\sum_{j=n_2}^{n_2+i-1}d_j\\
         &=y_{n_1}+p+\sum_{j=n_1}^{n_1+i-1}d_j\\
         &=y_{n_1+i}+p\ (i=0,1,\ldots,(n_1)'-n_1-1),
\end{align*}
so if $y_{n_1+i}\leq x<y_{n_1+i+1}$, then by the assumption,
\begin{align*}
  y_{n_1+i}&\leq x<y_{n_1+i+1}\\
y_{n_1+i}+p&\leq x+p<y_{n_1+i+1}+p\\
  y_{n_2+i}&\leq x+p<y_{n_2+i+1}
\end{align*}
so
\begin{equation*}
 g(x)=g(y_{i+n_1})=g(y_{i+n_2})=g(x+p).
\end{equation*}
When $k>0$, by inductive assumption, we may assume that $g(y+p)=g(y)$ holds for $y=x-s$, $s\in S_{\leq x}$. Then
\begin{align*}
g(x+p)&=\mex\{g(x+p-s)\mid s\in S_{\leq x}\}\\
      &=\mex\{g(x-s)\mid s\in S_{\leq x}\}\\
      &=g_S(x).
\end{align*}

\end{proof}

\begin{ques}
Is the Nim value function of $ G(S)$ with a finite set $S$ always periodic?
\end{ques}

We have a conjecture for this question at the bottom of the next section. (Conjecture~\ref{conj:genper}).

\begin{rem}

Let $ n_{1}$ be the minimum such that $ \bm{Y}_{n_{1}} =\bm{Y}_{n_{2}}$ for some $ n_{2}  >n_{1}$ (assume it exists). This minimality does not imply that the minimum preperiod is $ y_{n_{1}}$ (unless $ n_{1} =0$).\\
Take $ n_{2}$ to be the minimum among $ \bm{Y}_{n_{2}} =\bm{Y}_{n_{1}}$ with $ n_{2}  >n_{1}  >0$. The minimality of $ n_{1} ,n_{2}$ implies $ \bm{Y}_{n_{1} -1} \neq \bm{Y}_{n_{2} -1}$, and then $ \bigl( d_{n_{1} -1} ,g( y_{n_{1} -1})\bigr) \neq \bigl( d_{n_{2} -1} ,g( y_{n_{2} -1})\bigr)$.\\
Consider the case $ g( y_{n_{1} -1}) =g( y_{n_{2} -1})$ and $ d_{n_{1} -1}  >d_{n_{2} -1}$.\\
It implies
\begin{align*}
g( y_{n_{1}} -d_{n_{2} -1}) &=g( y_{n_{2} -1})\text{ and}\\
g( y_{n_{1}} -d_{n_{2} -1} -\varepsilon ) &\neq g( y_{n_{2} -1} -\varepsilon )
\end{align*}
for sufficiently small $ \varepsilon  >0$.
Then the minimum preperiod is $ y_{n_{1}} -d_{n_{2} -1} =y_{n_{1}} -( y_{n_{2}} -y_{n_{2} -1})$ and it is a linear combination of removable numbers, but some coefficients may be negative. If so, then $ y_{n_{1}} -d_{n_{2} -1}$ is not a break.

\end{rem}

\begin{ques}

Is there any example of the preperiod not being equal to a break?

\end{ques}

Here is a final preparation before the results.

\begin{defntt}\label{dntt:diag}
\ 
\begin{enumerate}
\item \label{enum:per}To denote a periodic upper semi-continuous step function $f$. When we use the diagram
\begin{equation*}
\begin{tikzpicture}
\node at (-.2,1.72) [left] {$f(x)$};
\node at (-.2,2.22) [left] {$x\ $};

\cod{0}{y_0}
\cod{\exw}{y_1}
\cod{2*\exw}{y_2}
\cod{3*\exw}{y_3}

\cod{3.5*\exw}{y_{s-1}}
\cod{4.5*\exw}{y_s}
\cod{5.5*\exw}{x_0}
\cod{6.5*\exw}{y_{s+1}}
\cod{7.5*\exw}{y_{s+2}}
\cod{8*\exw}{y_t}
\cod{9*\exw}{x_1}

\gbox{1}{0}{\exw}{z_0}
\gbox{1}{\exw}{2*\exw}{z_1}
\gbox{1}{2*\exw}{3*\exw}{z_2}
\gbox{1}{3*\exw}{3.5*\exw}{\cdots}
\gbox{1}{3.5*\exw}{4.5*\exw}{z_{s-1}}
\gbox{1}{4.5*\exw}{5.5*\exw}{z_s}
\gbox{1}{5.5*\exw}{6.5*\exw}{z_s} \lper{1}{5.5*\exw}
\gbox{1}{6.5*\exw}{7.5*\exw}{z_{s+1}}
\gbox{1}{7.5*\exw}{8*\exw}{\cdots}
\gbox{1}{8*\exw}{9*\exw}{z_t} \rper{1}{9*\exw}
\end{tikzpicture}
\end{equation*}
with some $ y_{0} \leq y_{1} \leq y_{2} \leq \cdots \leq y_{s-1} \leq y_{s} \leq x_0 < y_{s+1} \leq \cdots \leq y_{t} \leq x_{1}$ :real numbers and $ z_{0} ,z_{1} ,\ldots ,z_{t} \in \mathbb{Z}_{\geq 0}$, it expresses that the function $ f:\mathbb{R}_{\geq y_{0}}\rightarrow \mathbb{Z}_{\geq 0}$ satisfies
\begin{equation*}
f( x) =\begin{cases}
z_{i} & ( y_{i} \leq x< y_{i+1} ;\ i=0,1,\ldots ,t-1)\\
z_{t} & ( y_{t} \leq x< x_{1})\\
f\bigl( x-( x_{1} -x_{0})\bigr) & ( x\geq x_{1})
\end{cases}.
\end{equation*}

We leave the possibility $y_i=y_{i+1}$ for generalization. If it holds, the corresponding box can be ignored.
The thick straight (resp. curved) line indicates the begining (resp. end) of the loop.
The corners of the box
\begin{equation*}
\begin{tikzpicture}
\cod{0}{y_i}
\cod{\exw}{y_{i+1}}
\gbox{1}{0}{\exw}{z_i}
\end{tikzpicture}
\end{equation*}
indicate that the interval $[y_i,y_{i+1})$ is left-closed (sharp corners) and right-open (rounded corners).\\

Here is an example of the diagram (recall Proposition~\ref{prop:ord}). We will explain the usage of the notation with it.
\begin{equation*}
\begin{tikzpicture}
\node at (-.2,0.22) [left] {$g_{\{2,4,7\}}(x-7)$};
\node at (-.2,0.72) [left] {$g_{\{2,4,7\}}(x-4)$};
\node at (-.2,1.22) [left] {$g_{\{2,4,7\}}(x-2)$};
\node at (-.2,1.72) [left] {$g_{\{2,4,7\}}(x)$};
\node at (-.2,2.22) [left] {$x\ $};

\cod{0}{0}
\cod{2*\egw}{2}
\cod{4*\egw}{4}
\cod{6*\egw}{6}
\cod{7*\egw}{7}
\cod{8*\egw}{8}
\cod{9*\egw}{9}
\cod{10*\egw}{10}
\cod{11*\egw}{11}
\cod{12*\egw}{12}
\cod{13*\egw}{13}
\cod{14*\egw}{14}
\cod{15*\egw}{15}
\cod{16*\egw}{16}
\cod{17*\egw}{17}
\cod{18*\egw}{18}

\gbox{1}{0}{2*\egw}{0}
\gbox{1}{2*\egw}{4*\egw}{1}
\gbox{1}{4*\egw}{6*\egw}{2}
\gbox{1}{6*\egw}{7*\egw}{0}
\gbox{1}{7*\egw}{8*\egw}{3}
\gbox{1}{8*\egw}{9*\egw}{1} \lper{1}{8*\egw}
\gbox{1}{9*\egw}{10*\egw}{0}
\gbox{1}{10*\egw}{11*\egw}{2} \rper{1}{11*\egw}
\gbox{1}{11*\egw}{12*\egw}{1} \lper{1}{11*\egw}
\gbox{1}{12*\egw}{13*\egw}{0}
\gbox{1}{13*\egw}{14*\egw}{2} \rper{1}{14*\egw}
\gbox{1}{14*\egw}{15*\egw}{1} \lper{1}{14*\egw}
\gbox{1}{15*\egw}{16*\egw}{0}
\gbox{1}{16*\egw}{17*\egw}{2} \rper{1}{17*\egw}
\gbox{1}{17*\egw}{18*\egw}{1} \lper{1}{17*\egw}
\gbox{2}{2*\egw}{4*\egw}{0}
\gbox{2}{4*\egw}{6*\egw}{1}
\gbox{2}{6*\egw}{8*\egw}{2}
\gbox{2}{8*\egw}{9*\egw}{0}
\gbox{2}{9*\egw}{10*\egw}{3}
\gbox{2}{10*\egw}{11*\egw}{1} \lper{2}{10*\egw}
\gbox{2}{11*\egw}{12*\egw}{0}
\gbox{2}{12*\egw}{13*\egw}{2} \rper{2}{13*\egw}
\gbox{2}{13*\egw}{14*\egw}{1} \lper{2}{13*\egw}
\gbox{2}{14*\egw}{15*\egw}{0}
\gbox{2}{15*\egw}{16*\egw}{2} \rper{2}{16*\egw}
\gbox{2}{16*\egw}{17*\egw}{1} \lper{2}{16*\egw}
\gbox{2}{17*\egw}{18*\egw}{0}
\gbox{3}{4*\egw}{6*\egw}{0}
\gbox{3}{6*\egw}{8*\egw}{1}
\gbox{3}{8*\egw}{10*\egw}{2}
\gbox{3}{10*\egw}{11*\egw}{0}
\gbox{3}{11*\egw}{12*\egw}{3}
\gbox{3}{12*\egw}{13*\egw}{1} \lper{3}{12*\egw}
\gbox{3}{13*\egw}{14*\egw}{0}
\gbox{3}{14*\egw}{15*\egw}{2} \rper{3}{15*\egw}
\gbox{3}{15*\egw}{16*\egw}{1} \lper{3}{15*\egw}
\gbox{3}{16*\egw}{17*\egw}{0}
\gbox{3}{17*\egw}{18*\egw}{2} \rper{3}{18*\egw}
\gbox{4}{7*\egw}{9*\egw}{0}
\gbox{4}{9*\egw}{11*\egw}{1}
\gbox{4}{11*\egw}{13*\egw}{2}
\gbox{4}{13*\egw}{14*\egw}{0}
\gbox{4}{14*\egw}{15*\egw}{3}
\gbox{4}{15*\egw}{16*\egw}{1} \lper{4}{15*\egw}
\gbox{4}{16*\egw}{17*\egw}{0}
\gbox{4}{17*\egw}{18*\egw}{2} \rper{4}{18*\egw}
\end{tikzpicture}
\end{equation*}
\begin{enumerate}
\item We may stack multiple rows of boxes with a single coordinate axis, whose domains may be different (so you can check that each value in the uppermost row is the $\mex$ of the values below).
\item \label{enum:num}We may split the diagram into several ones (in that case we may put the diagram numbers in angle brackets $\langle\ \rangle$).
\item We may continue the diagram after the thick curved line (the end of the loop).
\end{enumerate}
\item To denote a finite reputation appearing in a diagram as (1). We have
\begin{equation*}
\begin{tikzpicture}[baseline=1.65cm]
\node at (-.2,1.72) [left] {$g_{\{2,4,13\}}(x)$};
\node at (-.2,2.22) [left] {$x\ $};

\cod{0}{0}
\cod{2*\egw}{2}
\cod{4*\egw}{4}
\cod{6*\egw}{6}
\cod{8*\egw}{8}
\cod{10*\egw}{10}
\cod{12*\egw}{12}
\cod{13*\egw}{13}
\cod{14*\egw}{14}
\cod{15*\egw}{15}
\cod{16*\egw}{16}
\cod{17*\egw}{17}

\gbox{1}{0}{2*\egw}{0}
\gbox{1}{2*\egw}{4*\egw}{1}
\gbox{1}{4*\egw}{6*\egw}{2}
\gbox{1}{6*\egw}{8*\egw}{0}
\gbox{1}{8*\egw}{10*\egw}{1}
\gbox{1}{10*\egw}{12*\egw}{2}
\gbox{1}{12*\egw}{13*\egw}{0}
\gbox{1}{13*\egw}{14*\egw}{3}
\gbox{1}{14*\egw}{15*\egw}{1} \lper{1}{14*\egw}
\gbox{1}{15*\egw}{16*\egw}{0}
\gbox{1}{16*\egw}{17*\egw}{2} \rper{1}{17*\egw}

\end{tikzpicture},
\end{equation*}
and you can find that
\begin{equation*}
\begin{tikzpicture}
\gbox{1}{0}{2*\egw}{0}
\gbox{1}{2*\egw}{4*\egw}{1}
\gbox{1}{4*\egw}{6*\egw}{2}
\end{tikzpicture}
\end{equation*}
appears twice in a row. Then we write
\begin{equation*}
\begin{tikzpicture}[baseline=1.65cm]
\node at (-.2,1.72) [left] {$g_{\{2,4,13\}}(x)$};
\node at (-.2,2.22) [left] {$x\ $};

\cod{0}{0}
\cod{2*\egw}{2}
\cod{4*\egw}{4}
\cod{6*\egw}{6}

\cod{6*\egw+\repw}{12}
\cod{7*\egw+\repw}{13}
\cod{8*\egw+\repw}{14}
\cod{9*\egw+\repw}{15}
\cod{10*\egw+\repw}{16}
\cod{11*\egw+\repw}{17}

\gbox{1}{0}{2*\egw}{0}
\gbox{1}{2*\egw}{4*\egw}{1}
\gbox{1}{4*\egw}{6*\egw}{2}

\node at (6*\egw-0.1,1.6) [right] {$\times2$};

\gbox{1}{6*\egw+\repw}{7*\egw+\repw}{0}
\gbox{1}{7*\egw+\repw}{8*\egw+\repw}{3}
\gbox{1}{8*\egw+\repw}{9*\egw+\repw}{1} \lper{1}{8*\egw+\repw}
\gbox{1}{9*\egw+\repw}{10*\egw+\repw}{0}
\gbox{1}{10*\egw+\repw}{11*\egw+\repw}{2} \rper{1}{11*\egw+\repw}
\end{tikzpicture}.
\end{equation*}
In general, when we write
\begin{equation*}
\begin{tikzpicture}

\cod{0}{y_u}
\cod{2}{y_{u+1}}
\cod{4}{y_{u+2}}
\cod{5}{y_{u+p'-1}}
\cod{7}{y_{u+p'}}

\gbox{1}{0}{2}{z_u}
\gbox{1}{2}{4}{z_{u+1}}
\gbox{1}{4}{5}{\cdots}
\gbox{1}{5}{7}{z_{u+p'-1}}

\node at (6.9,1.6) [right] {$\times n$};
\end{tikzpicture}
\end{equation*}
with some $ u\in \mathbb{Z}_{\geq 0}$, $ p'\in \mathbb{Z}_{ >0}$, $ z_{u} ,z_{u+1} ,\ldots ,z_{u+p'} \in \mathbb{Z}_{\geq 0}$ and $ n\in \mathbb{Z}_{\geq 2}$ in the notation (1), it stands for
\begin{equation*}
\begin{tikzpicture}[baseline=1.65cm]
\cod{0}{y_u}
\cod{5.25}{y_{u+p'}\quad}
\cod{6.05}{\qquad\quad y_{u+(n-1)p'}}
\cod{11.3}{y_{u+np'}}
\gbox{1}{0}{1.5}{z_u}
\gbox{1}{1.5}{3}{z_{u+1}}
\gbox{1}{3}{3.75}{\cdots}
\gbox{1}{3.75}{5.25}{z_{u+p'-1}}
\node at (5.65,1.75) {$\cdots$};
\gbox{1}{6.05}{7.55}{z_u}
\gbox{1}{7.55}{9.05}{z_{u+1}}
\gbox{1}{9.05}{9.8}{\cdots}
\gbox{1}{9.8}{11.3}{z_{u+p'-1}}
\node at (5.8,1.5) [below] {$\underbrace{\hspace{11.6cm}}_{n\text{ times}}$};
\end{tikzpicture}.
\end{equation*}
\end{enumerate}

\end{defntt}

\section{Results}

Before the results, we note the multiple game so that the parameter can be bounded.

\begin{definition}

For $ G(S)$ and $ t\in \mathbb{R}_{ >0}$, let $tS\coloneqq \{ts\mid s\in S\}$, then the $ t$-multiple game of $ G(S)$ is the Continim $ G(tS)$.

\end{definition}

\begin{lema}\label{lem:mult}

Let $T\coloneqq tS$ with $t\in\mathbb{R}_{>0}$, then we have $g_S(x)=g_T(tx)\ (x\geq 0) .$

\end{lema}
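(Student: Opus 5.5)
The plan is to apply the uniqueness criterion of Proposition~\ref{prop:fg} to the Continim $G(T)$. Define $f:\mathbb{R}_{\geq 0}\rightarrow\mathbb{Z}_{\geq 0}$ by
\begin{equation*}
f(y)\coloneqq g_S\!\left(\frac{y}{t}\right)\ (y\geq 0).
\end{equation*}
It suffices to show that $f$ satisfies the $\mex$-recursion for $T$. Proposition~\ref{prop:fg} then gives $f=g_T$, and substituting $y=tx$ yields $g_T(tx)=g_S(x)$.

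The key step is a bijection between the two index sets. The map $s\mapsto ts$ is a bijection $S\rightarrow T$. Since $t>0$, we have $s\leq y/t$ if and only if $ts\leq y$, so this map restricts to a bijection $S_{\leq y/t}\rightarrow T_{\leq y}$. Moreover $f(y-ts)=g_S\bigl(y/t-s\bigr)$.

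Using the defining recursion of $g_S$ (its $\mex$ characterization, as in Proposition~\ref{prop:fg}), we compute
\begin{align*}
f(y)&=g_S\!\left(\frac{y}{t}\right)=\mex\left\{g_S\!\left(\frac{y}{t}-s\right)\,\middle|\, s\in S_{\leq y/t}\right\}\\
&=\mex\{f(y-ts)\mid s\in S_{\leq y/t}\}=\mex\{f(y-u)\mid u\in T_{\leq y}\}.
\end{align*}
This is exactly the hypothesis of Proposition~\ref{prop:fg} with $T$ in place of $S$. Note that $T$ is a nonempty finite subset of $\mathbb{R}_{>0}$ because $t>0$.

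There is no real obstacle here. The only points to check are that $t>0$ preserves both the inequality $s\leq x$ and positivity, and that the $\mex$ recursion genuinely holds for $g_S$ at every real $x$. The latter is the definition of the Nim value, used the same way in the proof of Proposition~\ref{prop:fg}.
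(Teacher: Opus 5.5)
Your argument is correct and is essentially the paper's proof. The paper sets $f(x)\coloneqq g_T(tx)$, checks the $\mex$ recursion for $S$ via the same bijection $s\mapsto ts$ between $S_{\leq x}$ and $T_{\leq tx}$, and applies Proposition~\ref{prop:fg} to $S$, whereas you run it in the mirror direction (recursion for $g_S$, uniqueness for $T$), which is the same argument with $(S,T,t)$ replaced by $(T,S,1/t)$.
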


\begin{proof}

By the definition of the Nim value,
\begin{align*}
g_T(tx)&=\mex\{g_T(x-\sigma)\mid \sigma\in T_{\leq tx}\}\\
&=\mex\{g_T(tx-ts)\mid s\in S_{\leq x}\}\\
&=\mex\{g_T\bigl(t(x-s)\bigr)\mid s\in S_{\leq x}\}\ (x\geq 0).
\end{align*}
Thus the function $f(x)\coloneqq g_T(tx)$ satisfies
\begin{equation*}
f(x)=\mex\{f(x-s)\mid s\in S_{\leq x}\}\ (x\geq 0).
\end{equation*}
By Proposition~\ref{prop:fg}, we get $f=g_S$.

\end{proof}

\begin{conv}

For $G(S)$ with $S=\{s_1,s_2,\ldots,s_{r-1},s_r\}$, $0<s_1<s_2<\cdots<s_{r-1}<s_r$, let
\begin{equation*}
S'=\frac{S}{s_r}=\left\{\frac{s_1}{s_r},\frac{s_2}{s_r},\ldots,\frac{s_{r-1}}{s_r},1\right\},
\end{equation*}
then $ \max S'=1$ and $ S=s_rS'$, so the Nim value function of $ G(S)$ can be reconstructed by that of $ G( S')$ using Lemma~\ref{lem:mult}. In the following, unless otherwise stated, we assume that $\max S=1$ (bounded).

\end{conv}

\begin{rem}

Our results below apply for the classical Subtraction Nim with the set of removable numbers $\{a,b,c\}$ where $a<b<c$, by replacing $S$ with $\{\frac{a}{c},\frac{b}{c},1\}$.

\end{rem}

\begin{thm}\label{th:r001}

For $0<s_1<s_2<\cdots<s_{r-1}<s_r=1$, assume that $s_{j+1}\leq s_j+s_1$ holds for all $j$ with $1\leq j< r$. Then the Nim value function $g(x)$ of $G(\{s_1,s_2,\ldots,s_r\})$ agrees with
\begin{equation*}
f(x) \coloneqq \left\lfloor \displaystyle\frac{x\bmod( 1+s_{1})}{s_{1}}\right\rfloor.
\end{equation*}
In particular $g$ is purely periodic with the minimum period $1+s_1$.

\end{thm}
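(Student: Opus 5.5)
By Proposition~\ref{prop:fg}, it suffices to verify that $f$ satisfies
\begin{equation*}
f(x)=\mex\{f(x-s)\mid s\in S_{\leq x}\}\quad(x\geq 0).
\end{equation*}
Put $a\coloneqq s_1$ and $p\coloneqq 1+a$, and write $x\bmod p=u$ with $0\leq u<p$. Then $f(x)=k\coloneqq\lfloor u/a\rfloor$. Since $p=1+a<2a+\ldots$ need not hold, $k$ can be large; its range is $0\leq k\leq\lfloor (1+a)/a\rfloor$, and the last value corresponds to $u\in[\lceil\,\cdot\,\rceil]$ only partially. The hypothesis $s_{j+1}\leq s_j+a$ gives $s_j\leq 1-(r-j)a$ and, more importantly, says that the set $S\subset[a,1]$ has no gap longer than $a$. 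This gap condition is the key structural input.

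The check has two parts.
\begin{enumerate}
\item \emph{Every value $m<k$ is attained.} Take a target $x-s$ with $(x-s)\bmod p\in[ma,(m+1)a)$. Without wrapping around, this means choosing $s\in(u-(m+1)a,\,u-ma]$, a half-open interval of length exactly $a$ with $u-ma\geq a$. If this interval meets $[a,1]$, the no-gap condition yields some $s_j$ in it, because consecutive elements differ by at most $a$ and $s_1=a$ lies at or below its right end. If instead the interval lies entirely above $1$, we use $s=1$ together with wrapping. Then $x-1\equiv u+a\pmod p$, so we must compare $u+a$ against the arrangement, and I expect a cleaner route is needed here (see below).
\item \emph{The value $k$ is not attained.} We need that no $s\in S_{\leq x}$ gives $(u-s)\bmod p\in[ka,(k+1)a)$. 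For $s\in[a,1]$ we have $u-s\in(u-1,u-a]$. The part $[ka,(k+1)a)$ is excluded because $u-a<ka$. After wrapping, we get $u-s+p\in(u+a,\,u+1]$, and we need this to avoid $[ka,(k+1)a)\cap[0,p)$. Since $u+a\geq (k+1)a$, it misses unless $(k+1)a\geq$ something exceeding $p$. Here we only need that the wrapped range stays within $[(k+1)a,\,p+u)$, which is disjoint from the slot $k$ modulo $p$ because $u<(k+1)a$. One subtlety is that the top slot, $k=\lfloor p/a\rfloor$, is truncated. It has length $p\bmod a$, and this must be handled so that the representatives agree.
\end{enumerate}
When $x<1$, some $s$ are unavailable. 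However, $S_{\leq x}$ is then an initial segment of $S$, and the same no-gap argument applies with $u=x$ and no wrapping.

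The main obstacle is the careful modular bookkeeping in step 1 when the needed subtraction wraps around (small $u$, large $m$). Concretely, for $u<a$ we need to reach slots $m<k$, but here $k=0$, so nothing is required. More generally, I will show that the needed $s$ always lies in $(u-(m+1)a,\,u-ma]\cap[a,1]$. This holds because $u-ma\geq a$ and $u-(m+1)a<u-a<p-a=1$, so the interval of length $a$ meets $[a,1]$ and wrapping is never needed. This resolves the issue by direct interval arithmetic.

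Finally, minimality of the period $1+a$ follows because within one period $f$ takes the value $0$ exactly on $[0,a)$. Any smaller positive period would have to map this single zero block of length $a$ inside $[0,p)$ onto another zero block, which is impossible.
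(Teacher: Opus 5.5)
Your proof is correct and follows the paper's route. Both verify the recursion of Proposition~\ref{prop:fg} by splitting the moves from $x$ into two kinds: those staying in the current block of length $1+s_1$, which realize every value below $f(x)$ because $S$ has no gap exceeding $s_1$, and those wrapping into the previous block, which give values at least $f(x)+1$. For the first kind, you show that each length-$s_1$ window of $s$-values contains an element of $S$, while the paper notes that $f(x-s_1),\dots,f(x-s_\rho)$ descends from $f(x)-1$ to $0$ in steps of $0$ or $1$. Your zero-block argument also justifies the minimality of the period, which the paper leaves implicit.

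In the write-up, drop the exploratory detours and fix two slips. First, the hypothesis gives $s_j\ge 1-(r-j)s_1$ rather than $\le$; this bound is not needed anyway. Second, in part 2, what keeps every move out of slot $k$ is that $u-s+p$ ranges over $[u+a,\,u+1]$ with $u+1=p+(u-a)<p+ka$. The larger interval $[(k+1)a,p+u)$ you name does meet slot $k$ modulo $p$ whenever $u>ka$.
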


\begin{proof}

We prove that $f(x)$ agrees with $g(x)$ by using Proposition~\ref{prop:fg}. For $x<s_1$, it is a terminal position and we have 
\begin{equation*}
f(x)=0=\mex \emptyset= \{f(x-s)\mid x\in S_{\leq x}\}.
\end{equation*}
For $x\in[s_1,1+s_1)$, we have $f(x)=\displaystyle\left\lfloor\frac{x}{s_1}\right\rfloor$. Let $\rho\coloneqq\#\{i\leq r\mid s_i\leq x\}$, then the sequence
\begin{equation*}
f(x-s_1),f(x-s_2),\ldots,f(x-s_\rho)
\end{equation*}
satisfies following:
\begin{enumerate}
\item[$(1)_0$]
$f(x-s_1)=f(x)-1$.
\item[$(2)_0$]
We show that $f(x-s_{j+1})-f(x-s_j)=0\text{ or }-1\ (j=1,2,\ldots,\rho-1)$. We have
\begin{align*}
f(x-s_{j+1})&=\left\lfloor\frac{x-s_{j+1}}{s_1}\right\rfloor\\
&=\left\lfloor\frac{(x-s_j)-(s_{j+1}-s_j)}{s_1}\right\rfloor\\
&=\left\lfloor\frac{x-s_j}{s_1}-\frac{s_{j+1}-s_j}{s_i}\right\rfloor\\
&=f(x-s_j)\text{ or }f(x-s_j)-1,
\end{align*}
because $0<\displaystyle\frac{s_{j+1}-s_j}{s_i}\leq 1$ holds.
\item[$(3)_0$]
We show that $f(x-s_\rho)=0$. In fact, if $s_1\leq x<1$, then we get $s_\rho\leq x<s_{\rho+1}$ and 
\begin{equation*}
x-s_\rho<s_{\rho+1}-s_\rho\leq s_1.
\end{equation*}
If $1\leq x<1+s_1$, then we get $\rho=r$ and 
\begin{equation*}
x-s_\rho=x-1<s_1.
\end{equation*}
\end{enumerate}
Thus they form the set $\{0,1,\ldots,f(x)-1\}$ and $f(x)$ is the $\mex$ of them. We have shown that 
\begin{equation*}
f(x)=\mex\{f(x-s_i)\mid s_i\leq x\}\ (x\in[s_1,1+s_1)).
\end{equation*}
Likewise, for $x\in\bigl[k(1+s_1),(k+1)(1+s_1)\bigr)\ (k=1,2,\ldots)$, we have $f(x)=\displaystyle\left\lfloor\frac{x-k(1+s_1)}{s_1}\right\rfloor$. Let $\rho\coloneqq\#\{i\leq r\mid s_i\leq x-k(1+s_1)\}$, then we have
\begin{equation*}
x-s_i\in
\begin{cases}
\bigl[k(1+s_1),(k+1)(1+s_1)\bigr) &(i=1,2,\ldots,\rho)\\
\bigl[(k-1)(1+s_1),k(1+s_1)\bigr) &(i=\rho+1,\rho+2,\ldots,r)
\end{cases}.
\end{equation*}
If $x\geq k(1+s_1)+s_1$, then $\rho\geq 1$ and the sequence
\begin{equation*}
f(x-s_1),f(x-s_2),\ldots,f(x-s_\rho)
\end{equation*}
satisfies following:
\begin{enumerate}
\item[$(1)_k$]
$f(x-s_1)=f(x)-1$.\label{enum:f-1}
\item[$(2)_k$]
$f(x-s_{j+1})-f(x-s_j)=0\text{ or }-1\ (j=1,2,\ldots,\rho-1)$, because\label{enum:0-1}
\begin{align*}
 f(x-s_{j+1})
&=\left\lfloor\frac{x-s_{j+1}-k(1+s_1)}{s_1}\right\rfloor\\
&=\left\lfloor\frac{\bigl(x-s_j-k(1+s_1)\bigr)-(s_{j+1}-s_j)}{s_1}\right\rfloor\\
&=\left\lfloor\frac{x-s_j-k(1+s_1)}{s_1}-\frac{s_{j+1}-s_j}{s_1}\right\rfloor\\
&=f(x-s_j)\text{ or }f(x-s_j)-1.
\end{align*}
\item[$(3)_k$] We have $f(x-s_\rho)=0$, because if $k(1+s_1)\leq x<k(1+s_1)+1$, then we get $s_\rho\leq x-k(1+s_1)<s_{\rho+1}$ and 
\begin{equation*}
x-k(1+s_1)-s_\rho<s_{\rho+1}-s_\rho\leq s_1.
\end{equation*}
If $k(1+s_1)+1\leq x<(k+1)(1+s_1)$, then we get $\rho=r$ and 
\begin{equation*}
x-k(1+s_1)-s_\rho=x-k(1+s_1)-1<s_1.
\end{equation*}
\end{enumerate}
If $x< k(1+s_1)+1$, then $\rho<r$ and the sequence
\begin{equation*}
f(x-s_{\rho+1}),f(x-s_{\rho+2}),\ldots,f(x-s_r)
\end{equation*}
satisfies following:
\begin{enumerate}
\item[$(2)'_k$]
$f(x-s_{j+1})-f(x-s_j)=0\text{ or }-1\ (j=\rho,\rho+1,\ldots,r-1)$, because
\begin{align*}
f(x-s_{j+1})&=\left\lfloor\frac{x-s_{j+1}-(k-1)(1+s_1)}{s_1}\right\rfloor\\
&=\left\lfloor\frac{\bigl(x-s_j-(k-1)(1+s_1)\bigr)-(s_{j+1}-s_j)}{s_1}\right\rfloor\\
&=\left\lfloor\frac{x-s_j-(k-1)(1+s_1)}{s_1}-\frac{s_{j+1}-s_j}{s_1}\right\rfloor\\
&=f(x-s_j)\text{ or }f(x-s_j)-1.
\end{align*}
\item[$(4)_k$]
We have $f(x-s_r)=f(x-1)=f(x)+1$, because
\begin{align*}
f(x-1)&=\left\lfloor\frac{x-1-(k-1)(1+s_1)}{s_1}\right\rfloor\\
&=\left\lfloor\frac{x-k(1+s_1)+s_1}{s_1}\right\rfloor\\
&=\left\lfloor\frac{x-k(1+s_1)}{s_1}+1\right\rfloor\\
&=f(x)+1.
\end{align*}
\end{enumerate}
Thus the terms of the first sequence form the set $\{0,1,\ldots,f(x)-1\}$ and those of the second are greater than $f(x)$. Hence $f(x)$ is the $\mex$ of all of them. We have shown that 
\begin{equation*}
f(x)=\mex\{f(x-s_i)\mid s_i\leq x\}\ (x\in\bigl[k(1+s_1),(k+1)(1+s_1)\bigr),\ k=1,2,\ldots).
\end{equation*}

\end{proof}

\begin{thm}\label{th:necktie}

For $ 0< a,b< 1$ with $ b\leq 2a$ and $a\leq 2b$, the Nim value function of $ G(S)$ for $ S=\{a,b,1\}$ is purely periodic if $(a,b)$ is in any of the colored region (to be precisely defined later) on Fig.~\ref{fig:necktie}. By exchanging $a$ and $b$ if necessary, we assume $a\leq b$ in the following. Then for each colored region the minimum pure period is
\begin{equation*}
\begin{cases}
a+b & (\text{if }(a,b)\text{ is in a green region})\\
a+1 & (\text{if }(a,b)\text{ is in a red region})\\
b+1 & (\text{if }(a,b)\text{ is in a blue region})
\end{cases} ,
\end{equation*}
and if $(a,b)$ is in a white region, it is not purely periodic with the period $a+b$, $1+a$ nor $1+b$.

\begin{figure}[hbtp]  % Used: TikZ
\centering
\begin{tikzpicture}
\draw [ultra thin, fill=black!5, line join=bevel] (0,0) --(\fgx,0) --(\fgx,\fgx/2) --(0,0) --(0,\fgx) --(\fgx/2,\fgx) --(0,0);
\draw [very thin] (0,\fgx/2) --(\fgx/2,\fgx/2) --(\fgx/2,0)
foreach \j in {1,...,3}  {(0,\fgx) --(\fgx/\j,0)} 
foreach \j in {1,...,4}  {(0,\fgx/2) --(\fgx/\j,0)}
foreach \j in {1,...,5}  {(0,\fgx/3) --(\fgx/\j,0)}
foreach \j in {2,...,6}  {(0,\fgx/4) --(\fgx/\j,0)}
foreach \j in {3,...,6}  {(0,\fgx/5) --(\fgx/\j,0)}
foreach \j in {4,...,6}  {(0,\fgx/6) --(\fgx/\j,0)};
\filldraw [red] foreach \i in {0,...,5}
 {({divide(\fgx,3*\i+2)},{divide(2*\fgx,3*\i+2)})
  --({divide(\fgx,3*\i+3)},{divide(2*\fgx,3*\i+3)})
  --({divide(2*\fgx,3*\i+3)},{divide(\fgx,3*\i+3)})
  --({divide(2*\fgx,3*\i+2)},{divide(\fgx,3*\i+2)})
  --({divide(\fgx,2*\i+1)},{divide(\fgx,2*\i+1)}) --cycle};
\filldraw [green] foreach \i in {1,...,5}
 {({divide(\fgx,3*\i+1)},{divide(2*\fgx,3*\i+1)})
  --({divide(\fgx,3*\i+2)},{divide(2*\fgx,3*\i+2)})
  --({divide(2*\fgx,3*\i+1)},{divide(\fgx,3*\i+1)})
  --({divide(2*\fgx,3*\i+2)},{divide(\fgx,3*\i+2)}) --cycle};
\filldraw [blue, line join=bevel] foreach \i in {1,...,5}
 {({divide(\fgx,3*\i+1)},{divide(2*\fgx,3*\i+1)})
  --({divide(\fgx,2*\i+1)},{divide(\fgx,2*\i+1)})
  --({divide(2*\fgx,3*\i+1)},{divide(\fgx,3*\i+1)})
  --({divide(\fgx,2*\i)},{divide(\fgx,2*\i)}) --cycle};
\draw [thick] (0,0) rectangle(\fgx,\fgx);
\draw (0,0) [->] --(0,\fgx+.2);
\draw (0,0) [->] --(\fgx+.2,0);
\node at (\fgx+.2,0) [right] {$a$};
\node at (0,\fgx+.2) [above] {$b$};
\draw [thick, line join=bevel] (\fgx/2,\fgx) --(0,0) --(\fgx,\fgx/2)
(\fgx/3,2*\fgx/3) --(2*\fgx/3,\fgx/3) foreach \i in {1,...,5}
 {  ({divide(\fgx,3*\i+2)},{divide(2*\fgx,3*\i+2)})
  --({divide(2*\fgx,3*\i+1)},{divide(\fgx,3*\i+1)})
  --({divide(\fgx,2*\i)},{divide(\fgx,2*\i)})
  --({divide(\fgx,3*\i+1)},{divide(2*\fgx,3*\i+1)})
  --({divide(2*\fgx,3*\i+2)},{divide(\fgx,3*\i+2)})
    ({divide(\fgx,3*\i+3)},{divide(2*\fgx,3*\i+3)})
  --({divide(2*\fgx,3*\i+3)},{divide(\fgx,3*\i+3)})};
\draw [thick, dashed] (\fgx/15,\fgx/15) --(\fgx,\fgx);
\path node at (\fgx,0) [below] {$1$} 
      node at (0,\fgx) [left] {$1$}
foreach \i in {2,...,6} {node at (\fgx/\i,0) [below] {$\frac{1}{\i}$}}
foreach \i in {2,...,5} {node at (0,\fgx/\i) [left] {$\frac{1}{\i}$}} 
      node at (-.2,\fgx/6) [left] {$\frac{1}{6}$}
      node at (\fgx/12,0) [below] {$\cdots$}
      node at (0,\fgx/12) [left] {$\vdots$}
      node at (0,0) [below left] {$0$};
\path (0,-.02) -- node[sloped]{$\cdots$} (\fgx/12,\fgx/12-.02);
\end{tikzpicture}
\caption{The periods of the Nim value function of $G(S)$ where $S=\{a,b,1\}$ with $0<a,b<1$, $b\leq 2a$ and $a\leq 2b$.}\label{fig:necktie}
\end{figure}

Let $n$ be a nonnegative integer and assume that $na+nb<1\leq(n+1)a+(n+1)b$. Then each region is defined by
\begin{equation*}
\left\{
\begin{alignedat}{5}
(n+1)&a+&    n&b\leq1\leq&    n&a+&(n+1)&b &     &(n\text{th green region }G_n\ (n\geq 1))\\
    n&a+&(n+1)&b   <1\leq&(n+1)&a+&(n+1)&b &     &(n\text{th red region }  R_n\ (n\geq 0))\\
    n&a+&    n&b   <1   <&(n-1)&a+&(n+1)&b &     &(n\text{th white region }W_n\ (n\geq 1))\\
(n-1)&a+&(n+1)&b\leq1   <&(n+1)&a+&    n&b &\quad&(n\text{th blue region } B_n\ (n\geq 1))
\end{alignedat}
\right. .
\end{equation*}
In order to describe the Nim value functions precisely (and partially for $W_n$), we split each $R_n$ into two parts, each $W_n$ into four, and each $B_n$ into three, as shown in Fig.~\ref{fig:split}:
\begin{equation*}
\left\{
\begin{alignedat}{4}
    n&a+&(n+1)&b<1\leq(n+2)a+&    n&b &\quad& (R_n^1)\\
(n+2)&a+&    n&b<1\leq(n+1)a+&(n+1)&b &     & (R_n^2)
\end{alignedat}
\right. ,
\end{equation*}
\begin{equation*}
\left\{
\begin{alignedat}{4}
    n&a+&    n&b<1\leq&\min(&(n-1)a+(n+1)b,(n+3)a+(n-2)b) &\quad& (W_n^1)\\
(n+3)&a+&(n-2)&b<1   <&     &(n-1)a+(n+1)b                &     & (W_n^2)\\
    n&a+&    n&b<1\leq&     &(n+2)a+(n-1)b<(n-1)a+(n+1)b  &     & (W_n^3)\\
(n+2)&a+&(n-1)&b<1   <&     &(n-1)a+(n+1)b                &     & (W_n^4)
\end{alignedat}
\right. ,
\end{equation*}
\begin{equation*}
\left\{
\begin{alignedat}{4}
                             (n-1)a+(n+1)b& &&{}\leq1<(n+3)a+&(n-2)&b &\quad& (B_n^1)\\
\max\bigl((n-1)a+(n+1)b,(n+3)a+(n-2)b&\bigr)&&{}\leq1<(n+2)a+&(n-1)&b &     & (B_n^2)\\
\max\bigl((n-1)a+(n+1)b,(n+2)a+(n-1)b&\bigr)&&{}\leq1<(n+1)a+&    n&b &     & (B_n^3)
\end{alignedat}
\right. .
\end{equation*}

\begin{figure}[hbtp] % Used: TikZ
\centering
\begin{tikzpicture}
\draw [ultra thin, fill=black!5, line join=bevel] (\fgx,\fgx/2) --(0,0) --(\fgx,0);
\draw [ultra thin, fill=black!5, line join=bevel] (\fgx/2,\fgx) --(0,0) --(0,\fgx);
\draw [very thin]
foreach \j in {10,...,12}  {(0,\fgx) --(4*\fgx/\j,8*\fgx/\j)} 
foreach \j in {6,...,8}  {(\fgx,0) --(4*\fgx/\j,4*\fgx/\j)}
(4*\fgx/7,4*\fgx/7) --(8*\fgx/10,4*\fgx/10) (4*\fgx/6,4*\fgx/6) --(8*\fgx/9,4*\fgx/9);
\filldraw [red] (4*\fgx/7,4*\fgx/7) --(4*\fgx/11,8*\fgx/11) --(4*\fgx/12,8*\fgx/12) --(4*\fgx/8,4*\fgx/8);
\filldraw [green, line join=bevel] (4*\fgx/10,8*\fgx/10) --(4*\fgx/11,8*\fgx/11) --(4*\fgx/7,4*\fgx/7) --cycle;
\filldraw [blue, line join=bevel] (4*\fgx/6,4*\fgx/6) --(4*\fgx/10,8*\fgx/10) --(4*\fgx/7,4*\fgx/7);
\draw (0,0) [->] --(0,\fgx+.2);
\draw (0,0) [->] --(\fgx+.2,0);
\node at (\fgx+.2,0) [right] {$a$};
\node at (0,\fgx+.2) [above] {$b$};
\draw [white, thick, line join=bevel] (4*\fgx/8,4*\fgx/8) --(4*\fgx/11,8*\fgx/11)
(8*\fgx/16,12*\fgx/16) --(4*\fgx/7,4*\fgx/7) --(12*\fgx/22,16*\fgx/22);
\draw [thick, line join=bevel] (\fgx/2,\fgx) --(0,0) --(\fgx,\fgx/2)
(12*\fgx/22,16*\fgx/22) --(8*\fgx/15,12*\fgx/15) --(8*\fgx/16,12*\fgx/16)
-- (4*\fgx/9,8*\fgx/9) --(4*\fgx/6,4*\fgx/6) --(4*\fgx/10,8*\fgx/10) --(4*\fgx/7,4*\fgx/7) --(4*\fgx/11,8*\fgx/11) --(4*\fgx/12,8*\fgx/12) --(4*\fgx/8,4*\fgx/8) --(4*\fgx/6,4*\fgx/6);
\draw [thick, dashed] (0,0) --(\fgx,\fgx);
\path node at (\fgx,0) [below] {$\frac{1}{n+1}$} 
      node at (0,\fgx) [left] {$\frac{1}{n+1}$}
      node at (0,0) [below left] {$0$};
\draw (4*\fgx/6.9,4*\fgx/5.5) node {\contour{white}{$W_n^1$}};
\draw (4*\fgx/7.53,4*\fgx/5.28) node {\contour{white}{$W_n^2$}};
\draw (4*\fgx/8,4*\fgx/5) node {\contour{white}{$W_n^3$}};
\draw (4*\fgx/9.05,4*\fgx/4.95) node {$W_n^4$};
\draw [white] (4*\fgx/6.7,4*\fgx/6.1) node {$B_n^1$};
\draw [white] (4*\fgx/7.4,4*\fgx/5.8) node {\contour{blue}{$B_n^2$}};
\draw [white] (4*\fgx/8.35,4*\fgx/5.45) node {$B_n^3$};
\draw (4*\fgx/9.8,4*\fgx/5.39) node {$G_n$};
\draw [white] (4*\fgx/8,4*\fgx/7) node {$R_n^1$};
\draw [white] (4*\fgx/10.8,4*\fgx/6) node {$R_n^2$};
\end{tikzpicture}
\caption{Splitting $B_n$ into $B_n^1$, $B_n^2$ and $B_n^3$, $R_n$ into $R_n^1$ and $R_n^2$, and $W_n$ into $W_n^1$, $W_n^2$, $W_n^3$ and $W_n^4$, numbered from right. $G_n$ contains the borders shared with $R_n$ and $B_n$. Each of $R_n$ and $B_n$ contains the borders shared with $W_n$.}\label{fig:split}
\end{figure}

Let $m=a+b$, and let $\lambda$ be the ``remainder'' when you divide $1-a$ by $m(=a+b)$. In other words,
\begin{equation*}
\lambda=1-a-\left\lfloor\frac{1-a}{m}\right\rfloor m.
\end{equation*}
Then this $\lambda$ determines the ``color'' of $(a,b)$ as follows:
\begin{equation*}
\left\{
\begin{alignedat}{2}
               0 &{}\leq \lambda \leq     b-a       &     & (G_n  )\\
             b-a &{}   < \lambda \leq       a       &     & (R_n^1)\\
               a &{}   < \lambda \leq       b       &     & (R_n^2)\\
               b &{}   < \lambda <  \min(2b-a,3a-b) &     & (W_n^1)\\
            3a-b &{}\leq \lambda <       2b-a       &     & (W_n^2)\\
               b &{}   < \lambda <         2a<2b-a  &     & (W_n^3)\\
              2a &{}\leq \lambda <       2b-a       &     & (W_n^4)\\
            2b-a &{}\leq \lambda <       3a-b       &\quad& (B_n^1)\\
\max( 2b-a,3a-b) &{}\leq \lambda <         2a       &     & (B_n^2)\\
  \max( 2b-a,2a) &{}\leq \lambda <        a+b=m     &     & (B_n^3)
\end{alignedat}
\right. .
\end{equation*}

\end{thm}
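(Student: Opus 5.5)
The plan is to verify, region by region, an explicit candidate function $f$ and conclude $f=g$ by Proposition~\ref{prop:fg}, as in the proof of Theorem~\ref{th:r001}. Reparametrize by $m=a+b$, $\delta=b-a\ge 0$ and the remainder $\lambda\in[0,m)$, so that $1=a+km+\lambda$ with $k=\lfloor (1-a)/m\rfloor$. The assumption $na+nb<1\le(n+1)(a+b)$ together with $a\le b\le 2a$ pins down $k\in\{n-1,n\}$. All breaks of $G(\{a,b,1\})$ in one candidate loop are then of the form $i a+j b$ or $1+ia+jb$ with small $i,j$. Their relative order is fixed once we know where $\lambda$ lies relative to $0,\ b-a,\ a,\ b,\ 2a,\ 2b-a,\ 3a-b,\ m$. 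This is exactly why the table at the end of the statement lists precisely these thresholds; the colour of each subregion is a statement about a fixed ordering of breaks.

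\textbf{Step 1 (green $G_n$).} I would guess $f$ from the two-move game $\{a,b\}$. On $[0,m)$ set $f=0$ on $[0,a)$, $f=1$ on $[a,b)$ (empty if $a=b$), and on $[b,m)$ the value $2$ or $1$ according to whether $x\ge 2a$. Then extend with period $m$. One checks that the option $x-1$ always lands on a value that neither creates a smaller mex nor duplicates; this holds because $0\le\lambda\le b-a$ means $x-1$ shifted by $k$ periods falls into the same-value block.

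\textbf{Step 2 (red $R_n^1,R_n^2$ and blue $B_n^1,B_n^2,B_n^3$).} Here I would write the candidate loop of length $1+a$ (respectively $1+b$) as a diagram in the notation of Definition-Notation~\ref{dntt:diag}. The loop is $k$ repetitions of the $\{a,b\}$-block of Step 1, ``$\times k$'', followed by a tail of length about $\lambda+a$ (respectively $\lambda+b$) in which values $3$ (and possibly $0$ placed early) appear. I would stack the rows $f(x-a)$, $f(x-b)$, $f(x-1)$ under $f(x)$. Since $x-1$ lies exactly one loop minus $a$ (respectively $b$) back, the row $f(x-1)$ is the row $f(x-a)$ (respectively $f(x-b)$) shifted by one period. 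So only finitely many boxes of a single loop need checking, namely the boxes between consecutive breaks. Each subregion fixes the box boundaries in order, and in each box we verify the mex directly, treating separately the first loop, where $x-1<0$ for $x<1$ and the option is absent. Pure periodicity from $0$ then follows because the first loop's check uses the same inequalities. Minimality of the period follows from the pattern of values within a loop, e.g.\ a unique occurrence of $3$, or a unique $0$-block of a given length.

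\textbf{Step 3 (white $W_n$).} For each white subregion I would compute $g$ on an initial segment, on the order of two loops, using the same diagrams. I would then exhibit a point $x$ where $g(x)\ne g(x+p)$ for each $p\in\{m,1+a,1+b\}$, which kills pure periodicity with those periods.

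The main obstacle is Step 2: keeping track of the ordering of the shifted breaks $1-a,\ 1-b,\ 1+ia+jb$ against $ia+jb$ inside each loop. This bookkeeping is exactly what splits $B_n$ into three cases. I would first establish a lemma giving $g$ on $[0,1)$ uniformly as the $\{a,b\}$-game, periodic with period $m$ and values $0,1,2$. After that, only the window $[1,1+\max(a,b)+m)$ needs case analysis, and everything beyond reduces to it by the shift argument.
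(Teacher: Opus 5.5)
Your strategy is the paper's: an explicit loop for each subregion, the rows $f(x-a)$, $f(x-b)$, $f(x-1)$ stacked under $f(x)$, then Proposition~\ref{prop:fg}. However, the reduction in your last paragraph is false.

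For every blue region, and every red region with $n\ge1$, the period $p$ ($=1+a$, resp.\ $1+b$) exceeds $b+m$. In $R_n$ one has $1>na+(n+1)b\ge a+2b$, and in $B_n$ one has $1>nm\ge a+b$. So no shift by $p$ carries the window $[1,1+b+m)$ onto $[1,\infty)$. A shift by $m$ is not available either, because $f$ is not $m$-periodic across the tail.

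What you never examine is $x\in[1+nm,1+p)$, where the option $x-1$ runs through the irregular tail $[nm,p)$ of the first loop. In your window the row $f(x-1)$ only shows the local pattern $0,1,2$ with block lengths $a,a,b-a$. So configurations such as $f(x-1)=3$ (in $R_n^2$ and every $B_n^i$), or $f(x-1)=2$ on a block of length $\lambda$ (in $R_n^1$), are never tested there. They need their own check; in the paper this is diagram $\langle 2n+1\rangle$ for red and diagram $\langle 2n\rangle$ for blue.

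The stretch between the two windows is not automatic either. Periodicity only gives $f(x-1)=f(x+a)$ for red, resp.\ $f(x-1)=f(x+b)$ for blue. Combining this with the local period $m$ turns it into $f(x-1)=f(x-b)$ for red and $f(x-1)=f(x-a)$ for blue. This holds only while $x-1$ and $x-1-m$ both lie in the part of the loop that follows the local pattern. That pairing is the reverse of the one in your Step~2, whose identity is not correct as written.

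The fix is to verify the whole window $[1,1+p)$, as your remark about checking a single loop suggests, in three pieces:
\begin{itemize}
\item the initial window $[1,1+m)$;
\item the intermediate stretch, via the identity above;
\item the final window $[1+nm,1+p)$.
\end{itemize}
For $x\ge1+p$ all three options are present, and the mex condition is invariant under $x\mapsto x+p$.

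Finally, the loops must be pinned down exactly before any of this can be checked. Your description of the tails does not fit every case: $R_n^1$ contains no $3$, and its tail is $0,1,2$ with the last block stretched to length $\lambda$. The paper supplies these loops explicitly.
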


\begin{rem}

We conjecture that the Nim value function for $W_n$ is not purely periodic with any period, and that each $W_n$ is split into infinitely many parts.

\end{rem}

\begin{rem}

$B_n^1$ and $R_{n-1}^2$ share the point $(a,b)= \displaystyle\left(\frac{1}{2n} ,\frac{1}{2n}\right)$ (there is no other overlap). We can check their Nim value function (as below) are the same. See Appendix~\ref{appb}. Note that the point satisfies $a=b<1$ and $|\{a,b,1\}|=2$. Our result includes that case. See Corollary~\ref{cor:a1} for details. \label{nt:share}

\end{rem}

\begin{landscape}
\begin{proof}
\ 
Throughout this proof, and in Remarks~\ref{rem:border} and \ref{rem:overlap}, we use the notation $m\coloneqq a+b$. Define $ f:\mathbb{R}_{\geq 0}\rightarrow \mathbb{Z}_{\geq 0}$ for $B_n$, $G_n$ and $R_n$ as
\begin{equation*}
\left\{
\begin{alignedat}{2}
&\begin{tikzpicture}[baseline=1.65cm]
\cod{0}{0} \cod{1.5}{a} \cod{3}{2a} \cod{19/8*1.5}{m}
\cod{19/8*1.5+\repw}{nm} \cod{\hcc}{1}
\cod{\hcc+1/8*\haa}{\qquad\quad(n-1)m+3a} \cod{\hcc+4/8*\haa}{nm+a}
\cod{\hcc+\haa}{1+a}
\cod{\hcc+11/8*\haa}{1+b}
\gbox{1}{0}{1.5}{0} \lper{1}{0}
\gbox{1}{1.5}{3}{1} \gbox{1}{3}{19/8*1.5}{2}
\node at (19/8*1.5-0.1,1.6) [right] {$\times n$};

\gbox{1}{19/8*1.5+\repw}{\hcc}{0}
\gbox{1}{\hcc}{\hcc+1/8*\haa}{2}
\gbox{1}{\hcc+1/8*\haa}{\hcc+4/8*\haa}{3}
\gbox{1}{\hcc+4/8*\haa}{\hcc+\haa}{1}
\gbox{1}{\hcc+\haa}{\hcc+11/8*\haa}{3} \rper{1}{\hcc+11/8*\haa}
\end{tikzpicture} &&                                          (B_n^1)
\\
&\begin{tikzpicture}[baseline=1.65cm]
\cod{0}{0} \cod{1.5}{a} \cod{3}{2a} \cod{19/8*1.5}{m}
\cod{19/8*1.5+\repw}{nm} \cod{\hcc}{1}
\cod{\hcc+1/8*\haa}{\qquad\quad(n-1)m+3a} \cod{\hcc+4/8*\haa}{nm+a}
\cod{\hcc+\haa}{1+a}
\cod{\hcc+9/8*\haa}{\begin{tabular}{c}$(n-1)m+4a$\\$\downarrow$\end{tabular}}
\cod{\hcc+11/8*\haa}{1+b}
\gbox{1}{0}{1.5}{0} \lper{1}{0}
\gbox{1}{1.5}{3}{1} \gbox{1}{3}{19/8*1.5}{2}
\node at (19/8*1.5-0.1,1.6) [right] {$\times n$};

\gbox{1}{19/8*1.5+\repw}{\hcc}{0}
\gbox{1}{\hcc}{\hcc+1/8*\haa}{2}
\gbox{1}{\hcc+1/8*\haa}{\hcc+4/8*\haa}{3}
\gbox{1}{\hcc+4/8*\haa}{\hcc+\haa}{1}
\gbox{1}{\hcc+\haa}{\hcc+9/8*\haa}{3}
\gbox{1}{\hcc+9/8*\haa}{\hcc+11/8*\haa}{2} \rper{1}{\hcc+11/8*\haa}
\end{tikzpicture} &&                                          (B_n^2)
\\
&\begin{tikzpicture}[baseline=1.65cm]
\cod{0}{0} \cod{1.5}{a} \cod{3}{2a} \cod{19/8*1.5}{m}
\cod{19/8*1.5+\repw}{nm} \cod{\hcc}{1}
\cod{\hcc+4/8*\haa}{nm+a}
\cod{\hcc+\haa}{1+a}
\cod{\hcc+11/8*\haa}{1+b}
\gbox{1}{0}{1.5}{0} \lper{1}{0}
\gbox{1}{1.5}{3}{1} \gbox{1}{3}{19/8*1.5}{2}
\node at (19/8*1.5-0.1,1.6) [right] {$\times n$};

\gbox{1}{19/8*1.5+\repw}{\hcc}{0}
\gbox{1}{\hcc}{\hcc+4/8*\haa}{3}
\gbox{1}{\hcc+4/8*\haa}{\hcc+\haa}{1}
\gbox{1}{\hcc+\haa}{\hcc+11/8*\haa}{2} \rper{1}{\hcc+11/8*\haa}
\end{tikzpicture} &&                                          (B_n^3)
\\
&\begin{tikzpicture}[baseline=1.65cm]
\cod{0}{0} \cod{1.5}{a} \cod{3}{2a} \cod{19/8*1.5}{m}
\gbox{1}{0}{1.5}{0} \lper{1}{0}
\gbox{1}{1.5}{3}{1} \gbox{1}{3}{19/8*1.5}{2} \rper{1}{19/8*1.5}
\end{tikzpicture} &&                                            (G_n)
\\
&\begin{tikzpicture}[baseline=1.65cm]
\cod{0}{0} \cod{1.5}{a} \cod{3}{2a} \cod{19/8*1.5}{m}
\cod{19/8*1.5+\repw}{nm}
\cod{\hcc}{nm+a}
\cod{\hcc+4/8*\haa}{nm+2a}
\cod{\hcc+10/8*\haa}{1+a}
\gbox{1}{0}{1.5}{0} \lper{1}{0}
\gbox{1}{1.5}{3}{1} \gbox{1}{3}{19/8*1.5}{2}
\node at (19/8*1.5-0.1,1.6) [right] {$\times n$};

\gbox{1}{19/8*1.5+\repw}{\hcc}{0}
\gbox{1}{\hcc}{\hcc+4/8*\haa}{1}
\gbox{1}{\hcc+4/8*\haa}{\hcc+10/8*\haa}{2} \rper{1}{\hcc+10/8*\haa}
\end{tikzpicture} &&                                          (R_n^1)
\\
&\begin{tikzpicture}[baseline=1.65cm]
\cod{0}{0} \cod{1.5}{a} \cod{3}{2a} \cod{19/8*1.5}{m}
\cod{19/8*1.5+\repw}{nm}
\cod{\hcc}{nm+a}
\cod{\hcc+4/8*\haa}{nm+2a}
\cod{\hcc+\haa}{nm+3a}
\cod{\hcc+10/8*\haa}{1+a}
\gbox{1}{0}{1.5}{0} \lper{1}{0}
\gbox{1}{1.5}{3}{1} \gbox{1}{3}{19/8*1.5}{2}
\node at (19/8*1.5-0.1,1.6) [right] {$\times n$};

\gbox{1}{19/8*1.5+\repw}{\hcc}{0}
\gbox{1}{\hcc}{\hcc+4/8*\haa}{1}
\gbox{1}{\hcc+4/8*\haa}{\hcc+\haa}{2}
\gbox{1}{\hcc+\haa}{\hcc+10/8*\haa}{3} \rper{1}{\hcc+10/8*\haa}
\end{tikzpicture} &&                                          (R_n^2)
\end{alignedat}
\right.
\end{equation*}
and $\varphi:[0,1+a+b)\rightarrow \mathbb{Z}_{\geq 0}$ for $W_n$ as
\begin{equation*}
\left\{
\begin{alignedat}{2}
&\begin{tikzpicture}[baseline=1.65cm]
\cod{-\repw-19/8*1.5}{0}
\cod{-\repw-19/8*1.5+1.5}{a}
\cod{-\repw-19/8*1.5+3}{2a}
\cod{-\repw}{m}
\cod{0}{nm}
\cod{\aaa-\bbc/2}{1}
\cod{2*\aaa-\bbc}{(n-1)m+3a}
\cod{\aaa}{nm+a}
\cod{2*\aaa-\bbc/2}{\begin{tabular}{c}$1+a$\\$\downarrow$\end{tabular}}
\cod{\bbc}{\quad\  nm+b}
\cod{\aaa+\bbc/2}{1+b}
\cod{3*\aaa-\bbc/2}{1+2a\qquad}
\cod{\mmc}{\begin{tabular}{c}$(n+1)m$\\$\downarrow$\end{tabular}}
\cod{2*\aaa+\bbc/2}{1+a+b}
\gbox{1}{-\repw-19/8*1.5}{-\repw-19/8*1.5+1.5}{0}
\gbox{1}{-\repw-19/8*1.5+1.5}{-\repw-19/8*1.5+3}{1}
\gbox{1}{-\repw-19/8*1.5+3}{-\repw}{2}
\node at (-\repw-0.1,1.6) [right] {$\times n$};
\gbox{1}{0}{\aaa-\bbc/2}{0}
\gbox{1}{\aaa-\bbc/2}{2*\aaa-\bbc}{2}
\gbox{1}{2*\aaa-\bbc}{\aaa}{3}
\gbox{1}{\aaa}{2*\aaa-\bbc/2}{1}
\gbox{1}{2*\aaa-\bbc/2}{\bbc}{0}
\gbox{1}{\bbc}{\aaa+\bbc/2}{3}
\gbox{1}{\aaa+\bbc/2}{3*\aaa-\bbc/2}{0}
\gbox{1}{3*\aaa-\bbc/2}{\mmc}{1}
\gbox{1}{\mmc}{2*\aaa+\bbc/2}{0}
\end{tikzpicture} &&                                          (W_n^1)\\
&\begin{tikzpicture}[baseline=1.65cm]
\cod{-\repw-19/8*1.5}{0}
\cod{-\repw-19/8*1.5+1.5}{a}
\cod{-\repw-19/8*1.5+3}{2a}
\cod{-\repw}{m}
\cod{0}{nm}
\cod{\aaa-\bbc/2}{1}
\cod{2*\aaa-\bbc}{(n-1)m+3a}
\cod{\aaa}{nm+a}
\cod{2*\aaa-\bbc/2}{\begin{tabular}{c}$1+a$\ \\$\downarrow$\end{tabular}}
\cod{\bbc}{\quad\  nm+b}
\cod{3*\aaa-\bbc}{\begin{tabular}{c}\qquad$(n-1)m+4a$\quad\\$\downarrow$\end{tabular}}
\cod{\aaa+\bbc/2}{\quad 1+b}
\cod{3*\aaa-\bbc/2}{1+2a\qquad}
\cod{\mmc}{\begin{tabular}{c}$(n+1)m$\\$\downarrow$\end{tabular}}
\cod{2*\aaa+\bbc/2}{1+a+b}
\gbox{1}{-\repw-19/8*1.5}{-\repw-19/8*1.5+1.5}{0}
\gbox{1}{-\repw-19/8*1.5+1.5}{-\repw-19/8*1.5+3}{1}
\gbox{1}{-\repw-19/8*1.5+3}{-\repw}{2}
\node at (-\repw-0.1,1.6) [right] {$\times n$};
\gbox{1}{0}{\aaa-\bbc/2}{0}
\gbox{1}{\aaa-\bbc/2}{2*\aaa-\bbc}{2}
\gbox{1}{2*\aaa-\bbc}{\aaa}{3}
\gbox{1}{\aaa}{2*\aaa-\bbc/2}{1}
\gbox{1}{2*\aaa-\bbc/2}{\bbc}{0}
\gbox{1}{\bbc}{3*\aaa-\bbc}{3}
\gbox{1}{3*\aaa-\bbc}{\aaa+\bbc/2}{2}
\gbox{1}{\aaa+\bbc/2}{3*\aaa-\bbc/2}{0}
\gbox{1}{3*\aaa-\bbc/2}{\mmc}{1}
\gbox{1}{\mmc}{2*\aaa+\bbc/2}{0}
\end{tikzpicture} &&                                          (W_n^2)\\
&\begin{tikzpicture}[baseline=1.65cm]
\cod{-\repw-19/8*1.5}{0}
\cod{-\repw-19/8*1.5+1.5}{a}
\cod{-\repw-19/8*1.5+3}{2a}
\cod{-\repw}{m}
\cod{0}{nm}
\cod{\aaa-\bbc/2}{1}
\cod{2*\aaa-\bbc}{(n-1)m+3a}
\cod{\aaa}{nm+a}
\cod{2*\aaa-\bbc/2}{\begin{tabular}{c}$1+a$\\$\downarrow$\end{tabular}}
\cod{\bbc}{\quad\  nm+b}
\cod{\aaa+\bbc/2}{1+b}
\cod{3*\aaa-\bbc/2}{1+2a\qquad}
\cod{\mmc}{\begin{tabular}{c}$(n+1)m$\\$\downarrow$\end{tabular}}
\cod{2*\aaa+\bbc/2}{1+a+b}
\gbox{1}{-\repw-19/8*1.5}{-\repw-19/8*1.5+1.5}{0}
\gbox{1}{-\repw-19/8*1.5+1.5}{-\repw-19/8*1.5+3}{1}
\gbox{1}{-\repw-19/8*1.5+3}{-\repw}{2}
\node at (-\repw-0.1,1.6) [right] {$\times n$};
\gbox{1}{0}{\aaa-\bbc/2}{0}
\gbox{1}{\aaa-\bbc/2}{2*\aaa-\bbc}{2}
\gbox{1}{2*\aaa-\bbc}{\aaa}{3}
\gbox{1}{\aaa}{2*\aaa-\bbc/2}{1}
\gbox{1}{2*\aaa-\bbc/2}{\bbc}{0}
\gbox{1}{\bbc}{\aaa+\bbc/2}{2}
\gbox{1}{\aaa+\bbc/2}{3*\aaa-\bbc/2}{0}
\gbox{1}{3*\aaa-\bbc/2}{\mmc}{1}
\gbox{1}{\mmc}{2*\aaa+\bbc/2}{0}
\end{tikzpicture} &&                                          (W_n^3)\\
&\begin{tikzpicture}[baseline=1.65cm]
\cod{-\repw-19/8*1.5}{0}
\cod{-\repw-19/8*1.5+1.5}{a}
\cod{-\repw-19/8*1.5+3}{2a}
\cod{-\repw}{m}
\cod{0}{nm}
\cod{\aaa-\bbc/2}{1}
\cod{\aaa}{nm+a}
\cod{2*\aaa-\bbc/2}{\begin{tabular}{c}$1+a$\\$\downarrow$\end{tabular}}
\cod{\bbc}{\quad\  nm+b}
\cod{\aaa+\bbc/2}{1+b}
\cod{3*\aaa-\bbc/2}{1+2a\qquad}
\cod{\mmc}{\begin{tabular}{c}$(n+1)m$\\$\downarrow$\end{tabular}}
\cod{2*\aaa+\bbc/2}{1+a+b}
\gbox{1}{-\repw-19/8*1.5}{-\repw-19/8*1.5+1.5}{0}
\gbox{1}{-\repw-19/8*1.5+1.5}{-\repw-19/8*1.5+3}{1}
\gbox{1}{-\repw-19/8*1.5+3}{-\repw}{2}
\node at (-\repw-0.1,1.6) [right] {$\times n$};
\gbox{1}{0}{\aaa-\bbc/2}{0}
\gbox{1}{\aaa-\bbc/2}{\aaa}{3}
\gbox{1}{\aaa}{2*\aaa-\bbc/2}{1}
\gbox{1}{2*\aaa-\bbc/2}{\bbc}{0}
\gbox{1}{\bbc}{\aaa+\bbc/2}{2}
\gbox{1}{\aaa+\bbc/2}{3*\aaa-\bbc/2}{0}
\gbox{1}{3*\aaa-\bbc/2}{\mmc}{1}
\gbox{1}{\mmc}{2*\aaa+\bbc/2}{0}
\end{tikzpicture} &&                                          (W_n^4)
\end{alignedat}
\right. .
\end{equation*}

First we prove that our functions satisfy the condition of Proposition~\ref{prop:fg} when $x<1$, namely
\begin{equation*}
(\dag)\ f(x) =\mex\{f( x-s) \mid s\in S_{\leq x}\}\text{ for }0\leq x< 1.
\end{equation*}
Note that we put the numbers to the following diagrams as we mentioned in Definition-Notation~\ref{dntt:diag} (\ref{enum:per})(\ref{enum:num}). 
The diagram $\langle 0 \rangle$ below is for $x\in [0,m+a)$ and each diagram $\langle i \rangle$ is for $x\in [im+a,(i+1)m+a)$. 
\begin{align}
\setcounter{equation}{-1}
&\begin{tikzpicture}
\node at (-.2,0.72) [left] {$f(x-b)$};
\node at (-.2,1.22) [left] {$f(x-a)$};
\node at (-.2,1.72) [left] {$f(x)$};
\node at (-.2,2.22) [left] {$x\ $};
\cod{0}{0}
\cod{\aaa}{a}
\cod{\bbb}{b}
\cod{2*\aaa}{2a}
\cod{\aaa+\bbb}{m}
\cod{3*\aaa}{3a}
\cod{2*\aaa+\bbb}{m+a}
\gbox{1}{0}{\aaa}{0} \lper{1}{0}
\gbox{2}{\aaa}{2*\aaa}{0} \lper{2}{\aaa}
\gbox{3}{\bbb}{\aaa+\bbb}{0} \lper{3}{\bbb}
\gbox{1}{\aaa}{2*\aaa}{1}
\gbox{2}{2*\aaa}{3*\aaa}{1}
\gbox{3}{\aaa+\bbb}{2*\aaa+\bbb}{1}
\gbox{1}{2*\aaa}{\aaa+\bbb}{2}
\gbox{2}{3*\aaa}{2*\aaa+\bbb}{2}
\gbox{1}{\aaa+\bbb}{2*\aaa+\bbb}{0}
\end{tikzpicture}\\
&\begin{tikzpicture}
\node at (-.4,0.72) [left] {$f(x-b)$};
\node at (-.4,1.22) [left] {$f(x-a)$};
\node at (-.4,1.72) [left] {$f(x)$};
\node at (-.4,2.22) [left] {$x\ $};
\cod{0}{m+a}
\cod{\bbb-\aaa}{m+b}
\cod{\aaa}{m+2a}
\cod{\bbb}{2m}
\cod{2*\aaa}{m+3a}
\cod{\mmm}{2m+a}
\gbox{3}{0}{\bbb-\aaa}{2}
\gbox{2}{0}{\aaa}{0}
\gbox{3}{\bbb-\aaa}{\bbb}{0}
\gbox{1}{0}{\aaa}{1}
\gbox{2}{\aaa}{2*\aaa}{1}
\gbox{3}{\bbb}{\aaa+\bbb}{1}
\gbox{1}{\aaa}{\bbb}{2}
\gbox{2}{2*\aaa}{\aaa+\bbb}{2}
\gbox{1}{\bbb}{\aaa+\bbb}{0}
\end{tikzpicture}\\
&\begin{tikzpicture}
\node at (-.5,0.72) [left] {$f(x-b)$};
\node at (-.5,1.22) [left] {$f(x-a)$};
\node at (-.5,1.72) [left] {$f(x)$};
\node at (-.5,2.22) [left] {$x\ $};
\cod{0}{2m+a}
\cod{\bbb-\aaa}{2m+b}
\cod{\aaa}{2m+2a}
\cod{\bbb}{3m}
\cod{2*\aaa}{2m+3a\quad}
\cod{\mmm}{3m+a}
\gbox{3}{0}{\bbb-\aaa}{2}
\gbox{2}{0}{\aaa}{0}
\gbox{3}{\bbb-\aaa}{\bbb}{0}
\gbox{1}{0}{\aaa}{1}
\gbox{2}{\aaa}{2*\aaa}{1}
\gbox{3}{\bbb}{\aaa+\bbb}{1}
\gbox{1}{\aaa}{\bbb}{2}
\gbox{2}{2*\aaa}{\aaa+\bbb}{2}
\gbox{1}{\bbb}{\aaa+\bbb}{0}
\end{tikzpicture}\\
&\cdots\notag\\
&\begin{tikzpicture}
\node at (-.5,0.72) [left] {$f(x-b)$};
\node at (-.5,1.22) [left] {$f(x-a)$};
\node at (-.5,1.72) [left] {$f(x)$};
\node at (-.5,2.22) [left] {$x\ $};
\cod{0}{km+a}
\cod{\bbb-\aaa}{km+b}
\cod{\aaa}{km+2a\quad}
\cod{\bbb}{\quad(k+1)m}
\cod{2*\aaa}{km+3a\qquad}
\cod{\mmm}{\qquad(k+1)m+a}
\gbox{3}{0}{\bbb-\aaa}{2}
\gbox{2}{0}{\aaa}{0}
\gbox{3}{\bbb-\aaa}{\bbb}{0}
\gbox{1}{0}{\aaa}{1}
\gbox{2}{\aaa}{2*\aaa}{1}
\gbox{3}{\bbb}{\aaa+\bbb}{1}
\gbox{1}{\aaa}{\bbb}{2}
\gbox{2}{2*\aaa}{\aaa+\bbb}{2}
\gbox{1}{\bbb}{\aaa+\bbb}{0}
\end{tikzpicture}\tag{$k$}\\
&\cdots.\notag
\end{align}
This pattern with the local period $m=a+b$ (call this pattern as the local pattern) continues for $ k=0,1,2,\ldots $ until $ x$ reaches $ 1$, which concludes the proof for ($\dag$). As $f|_{[0,1)}=\varphi|_{[0,1)}$, the same holds for $\varphi$.

For the rest of proof, we consider when $ x$ reaches 1, i.e., the new row $ f( x-1)$ occurs. Using the diagram numbers, $x=1$ is in the diagram $\langle n^{*}\rangle$ where $ n^{*} \coloneqq \displaystyle\left\lfloor \frac{1-a}{m}\right\rfloor =\begin{cases}
n & (G_n\text{ or }R_n)\\
n-1 & (B_n\text{ or }W_n)
\end{cases}$, with the length $ \lambda $ from $ x=n^* m+a$ (the left end of diagram $\langle n^*\rangle$ except for when $n^*=0$).
\begin{equation}
\begin{tikzpicture}
\node at (-.6,0.22) [left] {$f(x-1)$};
\node at (-.6,0.72) [left] {$f(x-b)$};
\node at (-.6,1.22) [left] {$f(x-a)$};
\node at (-.6,1.72) [left] {$f(x)$};
\node at (-.6,2.22) [left] {$x\ $};
\cod{0}{n^*m+a}
\cod{2}{1}
\node at (0,2.25-.5*1) [right] {$1$};
\node at (0,2.25-.5*2) [right] {$0$};
\node at (0,2.25-.5*3) [right] {$2$};
\draw (2,2) --(0,2) --(0,.5) --(2,.5) (0,1.5) --(2,1.5) (0,1) --(2,1);
\draw [dashed] (2,2)  [rounded corners=6pt] -- (9,2) --(9,1.5) --(2,1.5) (9-.16,1.5)--(9,1.5) --(9,1) --(2,1) (9-.16,1)--(9,1) --(9,.5) --(2+\aaa,.5) (9-.16,.5)--(9,.5) --(9,0) --(2+\aaa,0);
\gbox{4}{2}{2+\aaa}{0} \lper{4}{2}
\draw (0,.25) [|<->] --(2,.25);
\node at (1,.25) [below] {$\lambda$};
\end{tikzpicture}\tag{$n^*$}
\end{equation}
We will check, for $G_n$ and $R_n^1$ as a representative,
\begin{itemize}
\item how the local pattern collapses (though it does not for $G_n$),
\item how the local pattern reappears for $x\geq1+a$ for $R_n$ and $x\geq1+b$ for $B_n$, and 
\item that the new row $f(x-1)$ is compatible with the pure period for all $x\geq1$,
\end{itemize}
so that $f(x)=\mex\{f(x-s)\mid s\in S_{\leq x}\}$ holds for $x\geq0$; and we check for $W_n^1$ that $\varphi=g|_{[0,1+a+b)}$ so that the Nim value function does not have the pure period $a+b$, $1+a$ nor $1+b$. For other regions, our check is similar to them. See Appendix~\ref{appb}.

Fig.~\ref{fig:lambda} indicates the correspondence between $\lambda$ and the region, depending on whether $a\leq b<\displaystyle\frac{4}{3}a$, $\displaystyle\frac{4}{3}a\leq b<\frac{3}{2}a$ or $\displaystyle\frac{3}{2}a\leq b<2a$.
\begin{figure}[H]  % Used: TikZ
\begin{center}
\begin{tikzpicture}
\node at (-.2,3.22) [left] {$f(x-b)$};
\node at (-.2,3.72) [left] {$f(x-a)$};
\node at (-.2,4.22) [left] {$f(x)$};
\cod{0}{0}
\cod{\bbb-\aaa}{b-a}
\cod{\aaa}{a}
\cod{\bbb}{b}
\cod{2*\bbb-\aaa}{2b-a\qquad\ }
\cod{3*\aaa-\bbb}{\qquad\  3a-b}
\cod{2*\aaa}{2a}
\cod{\mmm}{m}
\codd{0}{0}
\codd{\bbb-\aaa}{b-a}
\codd{\aaa}{a}
\codd{\bbb}{b\ }
\codd{4/3*\bbb-\aaa/3}{\quad 3a-b}
\codd{2*\bbb-\aaa}{\qquad\ 2b-a}
\codd{2*\aaa}{2a}
\codd{\mmm}{m}
\coddd{0}{0}
\coddd{\bbb-\aaa}{b-a}
\coddd{\aaa}{a}
\coddd{\bbb}{b}
\coddd{5/3*\bbb-2/3*\aaa}{2a}
\coddd{2*\bbb-\aaa}{\qquad\ 2b-a}
\coddd{\mmm}{m}
\gbox{(-2)}{0}{\bbb-\aaa}{2}
\gbox{(-3)}{0}{\aaa}{0}
\gbox{(-2)}{\bbb-\aaa}{\bbb}{0}
\gbox{(-4)}{0}{\aaa}{1}
\gbox{(-3)}{\aaa}{2*\aaa}{1}
\gbox{(-2)}{\bbb}{\aaa+\bbb}{1}
\gbox{(-4)}{\aaa}{\bbb}{2}
\gbox{(-3)}{2*\aaa}{\aaa+\bbb}{2}
\gbox{(-4)}{\bbb}{\aaa+\bbb}{0}
\draw (0,2) [|->] --(\mmm+.2,2);
\node at (\mmm+.2,2) [right] {$\lambda\ 
(a\leq b<\displaystyle\frac{4}{3}a)$};
\draw (0,0) [|->] --(\mmm+.2,0);
\node at (\mmm+.2,0) [right] {$\lambda\ 
(\displaystyle\frac{4}{3}a\leq b<\frac{3}{2}a)$};
\draw (0,-2) [|->] --(\mmm+.2,-2);
\node at (\mmm+.2,-2) [right] {$\lambda\ 
(\displaystyle\frac{3}{2}a\leq b<2a)$};
\draw [thick, line join=bevel] (0,2) --(0,2-.25) --(\bbb-\aaa,2-.25) --(\bbb-\aaa,2)
--(\bbb-\aaa+.1,2-.25) --(\aaa,2-.25) --(\aaa,2)
--(\aaa+.1,2-.25) --(\bbb,2-.25) --(\bbb,2)
--(\bbb+.1,2-.25) --(2*\bbb-\aaa-.1,2-.25)
--(2*\bbb-\aaa,2) --(2*\bbb-\aaa,2-.25) --(3*\aaa-\bbb-.1,2-.25) --(3*\aaa-\bbb,2)
--(3*\aaa-\bbb,2-.25) --(2*\aaa-.1,2-.25) --(2*\aaa,2)
--(2*\aaa,2-.25) --(\mmm-.1,2-.25) --(\mmm,2);
\node at (\bbb/2-\aaa/2,2-.25) [below] {$G_n$};
\node at (\bbb/2,2-.25) [below] {$R_n^1$};
\node at (\aaa/2+\bbb/2,2-.25) [below] {$R_n^2$};
\node at (3/2*\bbb-\aaa/2,2-.25) [below] {$W_n^1$};
\node at (\aaa+\bbb/2,2-.25) [below] {$B_n^1$};
\node at (5/2*\aaa-\bbb/2,2-.25) [below] {$B_n^2$};
\node at (3/2*\aaa+\bbb/2,2-.25) [below] {$B_n^3$};
\draw [thick, line join=bevel] (0,0) --(0,-.25) --(\bbb-\aaa,-.25) --(\bbb-\aaa,0)
--(\bbb-\aaa+.1,-.25) --(\aaa,-.25) --(\aaa,0)
--(\aaa+.1,-.25) --(\bbb,-.25) --(\bbb,0)
--(\bbb+.1,-.25) 
--(4/3*\bbb-\aaa/3-.1,-.25) --(4/3*\bbb-\aaa/3,0) --(4/3*\bbb-\aaa/3,-.25)
--(2*\bbb-\aaa-.1,-.25)
--(2*\bbb-\aaa,0) --(2*\bbb-\aaa,-.25) --(2*\aaa-.1,-.25) --(2*\aaa,0)
--(2*\aaa,-.25) --(\mmm-.1,-.25) --(\mmm,0);
\node at (\bbb/2-\aaa/2,-.25) [below] {$G_n$};
\node at (\bbb/2,-.25) [below] {$R_n^1$};
\node at (\aaa/2+\bbb/2,-.25) [below] {$R_n^2$};
\node at (7/6*\bbb-\aaa/6,-.25) [below] {$W_n^1$};
\node at (5/3*\bbb-2*\aaa/3,-.25) [below] {$W_n^2$};
\node at (\bbb+\aaa/2,-.25) [below] {$B_n^2$};
\node at (3/2*\aaa+\bbb/2,-.25) [below] {$B_n^3$};
\draw [thick, line join=bevel] (0,-2) --(0,-2.25) --(\bbb-\aaa,-2.25) --(\bbb-\aaa,-2)
--(\bbb-\aaa+.1,-2.25) --(\aaa,-2.25) --(\aaa,-2)
--(\aaa+.1,-2.25) --(\bbb,-2.25) --(\bbb,-2)
--(\bbb+.1,-2.25) 
--(5/3*\bbb-2/3*\aaa-.1,-2.25) --(5/3*\bbb-2/3*\aaa,-2) --(5/3*\bbb-2/3*\aaa, -2.25)
--(2*\bbb-\aaa-.1,-2.25)
--(2*\bbb-\aaa,-2) --(2*\bbb-\aaa,-2.25) --(\mmm-.1,-2.25) --(\mmm,-2);
\node at (\bbb/2-\aaa/2,-2.25) [below] {$G_n$};
\node at (\bbb/2,-2.25) [below] {$R_n^1$};
\node at (\aaa/2+\bbb/2,-2.25) [below] {$R_n^2$};
\node at (4/3*\bbb-\aaa/3,-2.25) [below] {$W_n^3$};
\node at (11/6*\bbb-5/6*\aaa,-2.25) [below] {$W_n^4$};
\node at (3/2*\bbb,-2.25) [below] {$B_n^3$};
\end{tikzpicture}
\caption{Relation between $\lambda$ and the pure period.}\label{fig:lambda}
\end{center}
\end{figure}
Comparing the diagrams for each case, the meaning of the borders of the parameter $\lambda$ can be found.

\newpage
\begin{enumerate}
\item For $G_n$ where $ 0\leq \lambda=1-a-nm \leq b-a$,
\begin{align}
&\begin{tikzpicture}
\node at (-.2,0.22) [left] {$f(x-1)$};
\node at (-.2,0.72) [left] {$f(x-b)$};
\node at (-.2,1.22) [left] {$f(x-a)$};
\node at (-.2,1.72) [left] {$f(x)$};
\node at (-.2,2.22) [left] {$x\ $};
\cod{0}{\begin{tabular}{c}$nm+a$\\$\downarrow$\end{tabular}}
\cod{\bbb/3-\aaa/3}{1}
\cod{\bbb-\aaa}{\begin{tabular}{c}$nm+b$\\$\downarrow$\end{tabular}}
\cod{\aaa}{\begin{tabular}{c}$nm+2a\quad$\\$\downarrow$\end{tabular}}
\cod{\bbb/3+2/3*\aaa}{\quad 1+a}
\cod{\bbb}{\begin{tabular}{c}$\quad (n+1)m$\\$\downarrow$\end{tabular}}
\cod{2*\aaa}{\begin{tabular}{c}$nm+3a\qquad$\\$\downarrow$\end{tabular}}
\cod{\bbb/3+5/3*\aaa}{\quad 1+2a}
\cod{\mmm}{\begin{tabular}{c}$\qquad(n+1)m+a$\\$\downarrow$\end{tabular}}
\gbox{1}{0}{\aaa}{1}
\gbox{1}{\aaa}{\bbb}{2} \rper{1}{\bbb}
\gbox{1}{\bbb}{\mmm}{0} \lper{1}{\bbb}
\gbox{2}{0}{\aaa}{0} \lper{2}{0}
\gbox{2}{\aaa}{2*\aaa}{1}
\gbox{2}{2*\aaa}{\mmm}{2} \rper{2}{\mmm}
\gbox{3}{0}{\bbb-\aaa}{2} \rper{3}{\bbb-\aaa}
\gbox{3}{\bbb-\aaa}{\bbb}{0} \lper{3}{\bbb-\aaa}
\gbox{3}{\bbb}{\mmm}{1}
\gbox{4}{\bbb/3-\aaa/3}{\bbb/3+2/3*\aaa}{0} \lper{4}{\bbb/3-\aaa/3}
\gbox{4}{\bbb/3+2/3*\aaa}{\bbb/3+5/3*\aaa}{1}
\gbox{4}{\bbb/3+5/3*\aaa}{4/3*\bbb+2/3*\aaa}{2} \rper{4}{4/3*\bbb+2/3*\aaa}
\end{tikzpicture}\tag{$n$}\\
&\begin{tikzpicture}
\node at (-1.1,0.22) [left] {$f(x-1)$};
\node at (-1.1,0.72) [left] {$f(x-b)$};
\node at (-1.1,1.22) [left] {$f(x-a)$};
\node at (-1.1,1.72) [left] {$f(x)$};
\node at (-1.1,2.22) [left] {$x\ $};
\cod{0}{\begin{tabular}{c}$(n+1)m+a\qquad$\\$\downarrow$\end{tabular}}
\cod{\bbb/3-\aaa/3}{\quad 1+b}
\cod{\bbb-\aaa}{\begin{tabular}{c}$\qquad\quad(n+1)m+b$\\$\downarrow$\end{tabular}}
\cod{\aaa}{(n+1)m+2a\qquad\qquad\quad}
\cod{2/3*\aaa+\bbb/3}{\begin{tabular}{c}$1+m$\\$\downarrow$\end{tabular}}
\cod{\bbb}{\quad (n+2)m}
\cod{2*\aaa}{(n+1)m+3a\qquad\qquad\quad}
\cod{\bbb/3+5/3*\aaa}{\begin{tabular}{c}$1+m+a$\\$\downarrow$\end{tabular}}
\cod{\mmm}{\qquad\quad(n+2)m+a}
\gbox{1}{0}{\aaa}{1}
\gbox{1}{\aaa}{\bbb}{2} \rper{1}{\bbb}
\gbox{1}{\bbb}{\mmm}{0} \lper{1}{\bbb}
\gbox{2}{0}{\aaa}{0} \lper{2}{0}
\gbox{2}{\aaa}{2*\aaa}{1}
\gbox{2}{2*\aaa}{\mmm}{2} \rper{2}{\mmm}
\gbox{3}{0}{\bbb-\aaa}{2} \rper{3}{\bbb-\aaa}
\gbox{3}{\bbb-\aaa}{\bbb}{0} \lper{3}{\bbb-\aaa}
\gbox{3}{\bbb}{\mmm}{1}
\gbox{4}{-2/3*\bbb+2/3*\aaa}{\bbb/3-\aaa/3}{2} \rper{4}{\bbb/3-\aaa/3}
\gbox{4}{\bbb/3-\aaa/3}{\bbb/3+2/3*\aaa}{0} \lper{4}{\bbb/3-\aaa/3}
\gbox{4}{\bbb/3+2/3*\aaa}{\bbb/3+5/3*\aaa}{1}
\gbox{4}{\bbb/3+5/3*\aaa}{4/3*\bbb+2/3*\aaa}{2} \rper{4}{4/3*\bbb+2/3*\aaa}
\end{tikzpicture}\tag{$n+1$}
\end{align}
As every value in the row $ f( x-1)$ also appears at the same time in either of rows $ f( x-a)$ or $ f( x-b)$, we get
\begin{equation*}
\{f( x-s) \mid s\in S\} =\{f( x-a) ,f( x-b)\}
\end{equation*}
for $ x\geq 1$, implying
\begin{equation*}
f(x) =\mex\{f( x-s) \mid s\in S\} \ ( x\geq 1).
\end{equation*}

\newpage
\item For $R_n^1$ where $ b-a< \lambda=1-a-nm \leq a$,
\begin{align}
&\begin{tikzpicture}
\node at (-.5,0.22) [left] {$f(x-1)$};
\node at (-.5,0.72) [left] {$f(x-b)$};
\node at (-.5,1.22) [left] {$f(x-a)$};
\node at (-.5,1.72) [left] {$f(x)$};
\node at (-.5,2.22) [left] {$x\ $};
\cod{0}{nm+a}
\cod{\bbb-\aaa}{nm+b}
\cod{\aaa/2}{1}
\cod{\aaa}{nm+2a}
\cod{\bbb}{\begin{tabular}{c}$(n+1)m$\\$\downarrow$\end{tabular}}
\cod{3/2*\aaa}{1+a}
\cod{2*\aaa}{nm+3a}
\cod{\mmm}{\begin{tabular}{c}$(n+1)m+a$\\$\downarrow$\end{tabular}}
\cod{5/2*\aaa}{1+2a}
\gbox{1}{0}{\aaa}{1}
\gbox{1}{\aaa}{3/2*\aaa}{2} \rper{1}{3/2*\aaa}
\gbox{1}{3/2*\aaa}{5/2*\aaa}{0} \lper{1}{3/2*\aaa}
\gbox{2}{0}{\aaa}{0}
\gbox{2}{\aaa}{2*\aaa}{1}
\gbox{2}{2*\aaa}{5/2*\aaa}{2} \rper{2}{5/2*\aaa}
\gbox{3}{0}{\bbb-\aaa}{2}
\gbox{3}{\bbb-\aaa}{\bbb}{0}
\gbox{3}{\bbb}{\mmm}{1}
\gbox{3}{\mmm}{\bbb+3/2*\aaa}{2} \rper{3}{\bbb+3/2*\aaa}
\gbox{4}{\aaa/2}{3/2*\aaa}{0} \lper{4}{\aaa/2}
\gbox{4}{3/2*\aaa}{5/2*\aaa}{1}
\end{tikzpicture}\tag{$n$}
\end{align}
Notice that due to $f(x-1)=0$ for $(n+1) m\leq x< 1+a$, the box $f(x)=2$ is stretched from length $b-a$ to $1-nm-a$. It trims the next box $f(x)=0$, but the stretched box $f(x-a)=2$ stretches it and the length is $a$ after all.
\begin{align}
&\begin{tikzpicture}
\node at (\bbb-3/2*\aaa-.2,0.22) [left] {$f(x-1)$};
\node at (\bbb-3/2*\aaa-.2,0.72) [left] {$f(x-b)$};
\node at (\bbb-3/2*\aaa-.2,1.22) [left] {$f(x-a)$};
\node at (\bbb-3/2*\aaa-.2,1.72) [left] {$f(x)$};
\node at (\bbb-3/2*\aaa-.2,2.22) [left] {$x\ $};
\cod{0}{1+2a}
\cod{\bbb-\aaa}{\quad 1+a+b}
\cod{\aaa}{1+3a}
\cod{\bbb}{\quad 1+2a+b}
\cod{2*\aaa}{1+4a}
\cod{\mmm}{\quad 1+3a+b}
\gbox{1}{0}{\aaa}{1}
\gbox{1}{\aaa}{\bbb}{2}
\gbox{1}{\bbb}{\mmm}{0}
\gbox{2}{0}{\aaa}{0} \lper{2}{0}
\gbox{2}{\aaa}{2*\aaa}{1}
\gbox{2}{2*\aaa}{\mmm}{2}
\gbox{3}{\bbb-3/2*\aaa}{\bbb-\aaa}{2} \rper{3}{\bbb-\aaa}
\gbox{3}{\bbb-\aaa}{\bbb}{0} \lper{3}{\bbb-\aaa}
\gbox{3}{\bbb}{\mmm}{1}
\gbox{4}{0}{\bbb-\aaa}{2}
\gbox{4}{\bbb-\aaa}{\bbb}{0}
\gbox{4}{\bbb}{\mmm}{1}
\end{tikzpicture}\tag{$n+1$}
\end{align}
As you can see, for $1+a+b\leq x<1+nm+2a$ we have
\begin{align*}
f(x-1)&=f(x-(a+b)-1) &&(\text{the local period }a+b)\\
      &=f(x-(1+a)-b)\\
      &=f(x-b).      &&(\text{the period }1+a)
\end{align*}
Hence the diagrams $\langle n+1 \rangle$ to $\langle 2n \rangle$ have the local period $a+b$, essentially the same as the diagrams $\langle 0 \rangle$ to $\langle n-1 \rangle$.

\begin{align}
&\begin{tikzpicture}
\node at (-.2,0.22) [left] {$f(x-1)$};
\node at (-.2,0.72) [left] {$f(x-b)$};
\node at (-.2,1.22) [left] {$f(x-a)$};
\node at (-.2,1.72) [left] {$f(x)$};
\node at (-.2,2.22) [left] {$x\ $};
\cod{0}{\begin{tabular}{c}$1+nm+2a\qquad\quad$\\$\downarrow$\end{tabular}}
\cod{\bbb-\aaa}{\begin{tabular}{c}$\qquad\quad1+(n+1)m$\\$\downarrow$\end{tabular}}
\cod{\aaa/2}{2+a}
\cod{\aaa}{1+nm+3a}
\gbox{1}{0}{\aaa}{1}
\gbox{2}{0}{\aaa}{0}
\gbox{3}{0}{\bbb-\aaa}{2}
\gbox{3}{\bbb-\aaa}{\bbb}{0}
\gbox{4}{0}{\aaa/2}{2} \rper{4}{\aaa/2}
\gbox{4}{\aaa/2}{3/2*\aaa}{0} \lper{4}{\aaa/2}
\end{tikzpicture}\tag{$2n+1$}
\end{align}
Thus $ f(x) =\mex\{f( x-s) \mid s\in S\}$ for $ 1\leq x< 2+a$. Notice that, as $f$ is purely periodic with the period $1+a$ for $R_n^1$, for $x\geq2+a$, the diagram is exactly the same as the diagram $\langle n \rangle$, hence we have $ f(x) =\mex\{f( x-s) \mid s\in S\}$ for all $x\geq1$. We are done for $R_n^1$.\\

\item For $W_n^1$ where $b<\lambda=1-a-(n-1)m<\min(2b-a,3a-b)$,
\begin{align*}
&\begin{tikzpicture}
\node at (\aaa-\bbb-.2,0.22) [left] {$\varphi(x-1)$};
\node at (\aaa-\bbb-.2,0.72) [left] {$\varphi(x-b)$};
\node at (\aaa-\bbb-.2,1.22) [left] {$\varphi(x-a)$};
\node at (\aaa-\bbb-.2,1.72) [left] {$\varphi(x)$};
\node at (\aaa-\bbb-.2,2.22) [left] {$x\ $};
\cod{3/2*\aaa-\bbb}{1}
\cod{2*\aaa-\bbb}{(n-1)m+3a\quad}
\cod{\aaa}{\begin{tabular}{c}$nm+a$\\$\downarrow$\end{tabular}}
\cod{5/2*\aaa-\bbb}{1+a\quad}
\cod{\bbb}{\begin{tabular}{c}$nm+b$\\$\downarrow$\end{tabular}}
\cod{3/2*\aaa}{1+b}
\cod{3*\aaa-\bbb}{\begin{tabular}{c}\quad$(n-1)m+4a$\\$\downarrow$\end{tabular}}
\cod{2*\aaa}{nm+2a\qquad\quad}
\cod{7/2*\aaa-\bbb}{1+2a\quad}
\cod{\mmm}{\begin{tabular}{c}\quad$(n+1)m$\\$\downarrow$\end{tabular}}
\cod{5/2*\aaa}{\quad 1+a+b}
\gbox{1}{3/2*\aaa-\bbb}{2*\aaa-\bbb}{2}
\gbox{1}{2*\aaa-\bbb}{\aaa}{3}
\gbox{1}{\aaa}{5/2*\aaa-\bbb}{1}
\gbox{1}{5/2*\aaa-\bbb}{\bbb}{0}
\gbox{1}{\bbb}{3/2*\aaa}{3}
\gbox{1}{3/2*\aaa}{7/2*\aaa-\bbb}{0}
\gbox{1}{7/2*\aaa-\bbb}{\mmm}{1}
\gbox{1}{\mmm}{5/2*\aaa}{0}
\gbox{2}{\aaa-\bbb}{2*\aaa-\bbb}{1}
\gbox{2}{2*\aaa-\bbb}{\aaa}{2}
\gbox{2}{\aaa}{5/2*\aaa-\bbb}{0}
\gbox{2}{5/2*\aaa-\bbb}{3*\aaa-\bbb}{2}
\gbox{2}{3*\aaa-\bbb}{2*\aaa}{3}
\gbox{2}{2*\aaa}{7/2*\aaa-\bbb}{1}
\gbox{2}{7/2*\aaa-\bbb}{\mmm}{0}
\gbox{2}{\mmm}{5/2*\aaa}{3}
\gbox{3}{0}{\aaa}{1}
\gbox{3}{\aaa}{\bbb}{2}
\gbox{3}{\bbb}{3/2*\aaa}{0}
\gbox{3}{3/2*\aaa}{2*\aaa}{2}
\gbox{3}{2*\aaa}{\mmm}{3}
\gbox{3}{\mmm}{5/2*\aaa}{1}
\gbox{4}{3/2*\aaa-\bbb}{5/2*\aaa-\bbb}{0}
\gbox{4}{5/2*\aaa-\bbb}{7/2*\aaa-\bbb}{1}
\gbox{4}{7/2*\aaa-\bbb}{5/2*\aaa}{2}
\end{tikzpicture}
\end{align*}

\begin{itemize}
\item There is a box $\varphi(x)=3$, so $a+b$ cannot be a pure period.
\item $\varphi(x)$ is not constant $0$ for $1+a\leq x<1+2a$ or $1+b\leq x<1+a+b$, so $1+a$ and $1+b$ cannot be a pure period either.
\end{itemize}

\end{enumerate}

\end{proof}
\end{landscape}

\begin{corollary}\label{cor:a1}

For $ 0< a< 1$, let $n=\displaystyle\left\lceil\frac{1}{2a}\right\rceil-1$, then we have $0<\displaystyle\frac{1}{a}-2n\leq2$, and depending on its value, the Nim value function of $ G(S)$ for $ S=\{a,1\}$ is
\begin{equation*}
\begin{cases}
\begin{tikzpicture}[baseline=1.65cm]
\cod{0}{0} \cod{1.5}{a} \cod{3}{2a}
\cod{3+\repw}{2na} \cod{3+\repw+\aaa/3}{1}
\cod{3+\repw+\aaa}{(2n+1)a} \cod{3+\repw+4/3*\aaa}{1+a}
\node at (3-.1,1.6) [right] {$\times n$};
\gbox{1}{0}{1.5}{0} \lper{1}{0}
\gbox{1}{1.5}{3}{1} 
\gbox{1}{3+\repw}{3+\repw+\aaa/3}{0}
\gbox{1}{3+\repw+\aaa/3}{3+\repw+\aaa}{2}
\gbox{1}{3+\repw+\aaa}{3+\repw+4/3*\aaa}{1} \rper{1}{3+\repw+4/3*\aaa}
\end{tikzpicture}&(\frac{1}{a}-2n<1)\\
\begin{tikzpicture}[baseline=1.65cm]
\cod{0}{0} \cod{1.5}{a} \cod{3}{2a}
\gbox{1}{0}{1.5}{0} \lper{1}{0}
\gbox{1}{1.5}{3}{1} \rper{1}{3}
\end{tikzpicture}&(\frac{1}{a}-2n=1)\\
\begin{tikzpicture}[baseline=1.65cm]
\cod{0}{0} \cod{1.5}{a} \cod{3}{2a}
\cod{3+2*\repw}{(2n+2)a} \cod{3+2*\repw+\aaa/3}{1+a}
\node at (3-.1,1.6) [right] {$\times (n+1)$};
\gbox{1}{0}{1.5}{0} \lper{1}{0}
\gbox{1}{1.5}{3}{1} 
\gbox{1}{3+2*\repw}{3+2*\repw+\aaa/3}{2} \rper{1}{3+2*\repw+\aaa/3}
\end{tikzpicture}&(\frac{1}{a}-2n>1)
\end{cases}
\end{equation*}
with the minimum period
\begin{equation*}
\begin{cases}
2a & (\frac{1}{a}-2n=1)\\
1+a & (\text{otherwise})
\end{cases}.
\end{equation*}

\begin{proof}
Straightforward from Theorem~\ref{th:necktie} by making $b=a$. To be more precise,
\begin{enumerate}
\item if $0<\frac{1}{a}-2n<1$, then we have $2na<1<(2n+1)a$, so the point $(a,a)$ is in $B_n^1$;
\item if $\frac{1}{a}-2n=1$, then we have $(2n+1)a=1$, so the point $(a,a)$ is in $G_n$;
\item if $1<\frac{1}{a}-2n\leq 2$, then we have $(2n+1)a<1\leq(2n+2)a$, so the point $(a,a)$ is in $R_n^1$.
\end{enumerate}

\end{proof}

\end{corollary}

Fig.~\ref{fig:rgb} is a calculation result, which we built the claim of Theorem~\ref{th:necktie} originally from.

Each point has a coordinate $ ( a,b) =\left(\displaystyle\frac{k\sqrt{2}}{\lceil 40\sqrt{2}\rceil } ,\frac{l\sqrt{3}}{\lceil 40\sqrt{3}\rceil }\right) \ ( k,l=1,2,\ldots ,40)$, and the color indicates that the Nim value function of $ G( \{a,b,1\})$ is
\begin{equation*}
\begin{cases}
\text{purely periodic with the period } a+b & (\text{green})\\
\text{purely periodic with the period } 1+\min( a,b) & (\text{red})\\
\text{purely periodic with the period } 1+\max( a,b) & (\text{blue})\\
\text{other than above} & (\text{gray})
\end{cases}.
\end{equation*}

We found that there are several blue points in the region $ \max( a,b) >2\min( a,b)$, but no red or green points. Rikuki Okada (Tohoku University) conjectured the condition for them, on which he will write a paper in the future.

\begin{figure}[H] % Used: Mathematica and TikZ
\centering
\begin{tikzpicture}
\node at (5.634,5.605) {\includegraphics[width=0.93\linewidth]{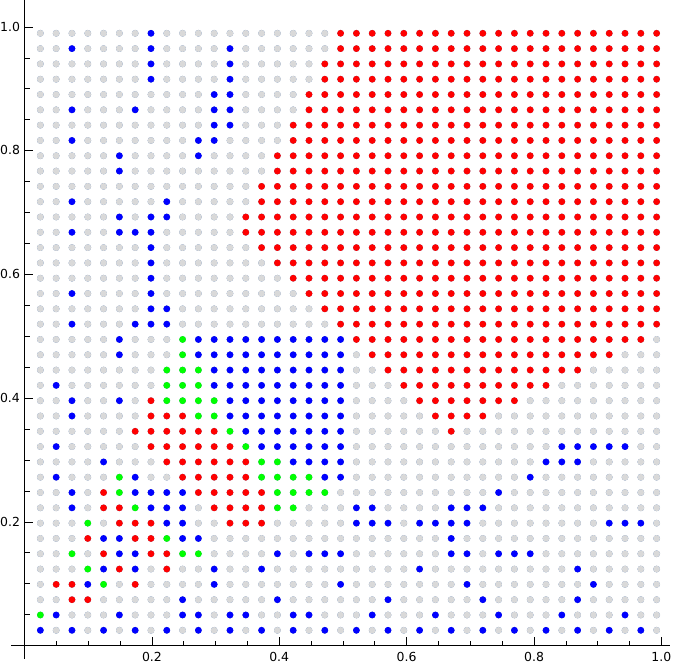}
};
\draw [line width=0.6pt] (0,-.3) [->] --(0,11.8);
\draw [line width=0.6pt] (-.3,0) [->] --(11.8,0);
\node at (0,0) [below left] {$0$};
\node at (0,11.8) [above] {$b$};
\node at (11.8,0) [right] {$a$};
\end{tikzpicture}
\caption{Experiment.}
\label{fig:rgb}
\end{figure}

Below is an example of such purely periodic Nim value function, located around $ ( 0.2,0.7)$ in Fig.~\ref{fig:rgb}.

\begin{ex}

Let $ S\coloneqq \{1,\sqrt{11} ,\sqrt{23}\} \ (\left(\frac{1}{\sqrt{23}},\frac{\sqrt{11}}{\sqrt{23}}\right) \approx ( 0.209,0.692))$. Then the Nim value function of $ G(S)$ is 
\begin{align*}
&\begin{tikzpicture}
\cod{0}{0} \cod{\ext}{1} \cod{2*\ext}{2} \cod{3*\ext}{3} \cod{4*\ext}{4}
\cod{\ext+\exa}{1+\sqrt{11}}
\gbox{1}{0}{\ext}{0} \lper{1}{0}
\gbox{1}{\ext}{2*\ext}{1}
\gbox{1}{2*\ext}{3*\ext}{0}
\gbox{1}{3*\ext}{4*\ext}{1}
\gbox{1}{4*\ext}{\ext+\exa}{2}
\end{tikzpicture}\\
&\begin{tikzpicture}[baseline=1.65cm]
\cod{\ext+\exa}{1+\sqrt{11}} \cod{\exb}{\sqrt{23}}
\cod{2*\ext+\exa}{2+\sqrt{11}}
\cod{\ext+\exb}{\begin{tabular}{c}$1+\sqrt{23}$\\$\downarrow$\end{tabular}}
\cod{3*\ext+\exa}{3+\sqrt{11}}
\cod{2*\ext+\exb}{\begin{tabular}{c}$2+\sqrt{23}$\\$\downarrow$\end{tabular}}
\cod{4*\ext+\exa}{4+\sqrt{11}}
\cod{3*\ext+\exb}{\begin{tabular}{c}$3+\sqrt{23}$\\$\downarrow$\end{tabular}} \cod{5*\ext}{5} \cod{6*\ext}{6} \cod{7*\ext}{7} \cod{8*\ext}{8\ }
\cod{\exa+\exb}{\qquad\qquad \sqrt{11}+\sqrt{23}}
\gbox{1}{\ext+\exa}{\exb}{0}
\gbox{1}{\exb}{5*\ext}{2}
\gbox{1}{5*\ext}{2*\ext+\exa}{3}
\gbox{1}{2*\ext+\exa}{\ext+\exb}{1}
\gbox{1}{\ext+\exb}{6*\ext}{3}
\gbox{1}{6*\ext}{3*\ext+\exa}{2}
\gbox{1}{3*\ext+\exa}{2*\ext+\exb}{0}
\gbox{1}{2*\ext+\exb}{7*\ext}{2}
\gbox{1}{7*\ext}{4*\ext+\exa}{3}
\gbox{1}{4*\ext+\exa}{3*\ext+\exb}{1}
\gbox{1}{3*\ext+\exb}{8*\ext}{3}
\gboxs{1}{8*\ext}{\exa+\exb}{2} \rper{1}{\exa+\exb}
\end{tikzpicture}.
\end{align*}

\end{ex}

\begin{conj}\label{conj:genper}

The Nim value function of $ G( S)$ for $ S=\{a,b,c\}$ with $0<a<b<c$ and $c\neq a+b$ (See Remark~\ref{rm:c=a+b} below for this condition) is periodic with a period $ a+b$, $ b+c$ or $ c+a$.

\end{conj}

\begin{rem}\label{rm:c=a+b}

For the case of $S=\{a,b,a+b\}$, we have shown that the period is $\lceil \dfrac{b}{a}\rceil a+b$ when $\lceil \dfrac{b}{a}\rceil$ is even, and $a(2b+r)$ when $\lceil \dfrac{b}{a}\rceil$ is odd. This result will be published in \citep{bhagat2026bracket}.

\end{rem}

\section{Mory sequence}

\begin{proposition}\label{prop:pnins}

If the Nim value function of $ G( S)$ is periodic with a period $ p$, then $ p\notin S$.

\end{proposition}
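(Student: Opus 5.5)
We argue by contradiction. Suppose $g=g_S$ is periodic with period $p\in S$ and preperiod $x_0$, so $g(x+p)=g(x)$ for all $x\geq x_0$. The point is that a move of size $p$ would then join two positions with equal Nim value, which the $\mex$ rule forbids.

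Choose any $x\geq x_0+p$. Then $p\leq x$, so $p\in S_{\leq x}$. By the definition of the Nim value (Proposition~\ref{prop:fg}),
\begin{equation*}
g(x)=\mex\{g(x-s)\mid s\in S_{\leq x}\}.
\end{equation*}
In particular $g(x)\neq g(x-s)$ for every $s\in S_{\leq x}$, and so $g(x)\neq g(x-p)$.

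On the other hand, $y\coloneqq x-p\geq x_0$, so periodicity gives $g(y+p)=g(y)$, that is, $g(x)=g(x-p)$. This contradicts the previous paragraph, hence $p\notin S$.

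There is no real obstacle. The only care needed is to take $x$ large enough that both $x-p\geq x_0$ (so periodicity applies) and $p\leq x$ (so the move is legal); the choice $x\geq x_0+p$ gives both, since $x_0\geq 0$. The argument needs $p>0$, which holds automatically because every element of $S$ is positive.
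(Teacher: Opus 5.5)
Your proof is correct and is essentially the paper's argument. Beyond the preperiod, periodicity gives $g(x+p)=g(x)$, while $p\in S$ would make the move from $x+p$ to $x$ legal, and the $\mex$ rule forces such a move to change the Nim value. Your explicit choice $x\geq x_0+p$ just spells out the legality and periodicity conditions that the paper's one-line proof leaves implicit.
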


\begin{proof}

For $ x\geq \text{preperiod}$ and $s\in S$, we have $g(x+p)=g(x)$ and $g(x+s)\neq g(x)$. 

\end{proof}

\begin{definition}

For given $ G( S)$, we define the \uline{Mory sequence} $ \bm{p}_S=[ p_{i}]_{i}$ of $ S$ as follows:

$ p_{i}$ is the minimum period of the Nim value function of $ G( S\cup \{p_{0} ,p_{1} ,\ldots ,p_{i-1}\})$ if periodic.

\end{definition}

By Proposition~\ref{prop:pnins} we get $ p_{i} \notin \{p_{0} ,p_{1} ,\ldots ,p_{i-1}\}$ if defined. There are multiple results of Mory sequence for the classical Subtraction Nim in \citep{moriwaki2025mory}.

\begin{thm}

For $0<s_1<s_2<\cdots<s_{r-1}<s_r$, if $s_{j+1}\leq s_j+s_1\ (1\leq j<r)$, then the Mory sequence of $S=\{s_1,s_2,\ldots,s_r\}$ is
\begin{equation*}
\bm{p}_S=[s_1+s_r,\ 2s_1+s_r,\ 3s_1+s_r,\ldots]=[(i+1)s_1+s_r]_{i}.
\end{equation*}
Moreover, the Nim value function of $G(S_l)$ for $S_l=S\cup\{s_1+s_r ,2s_1+s_r,\ldots,ls_1+s_r\}$ is
\begin{equation*}
g_{S_l}(x)=f_l(x)\coloneqq\left\lfloor\frac{x\bmod\bigl((l+1)s_1+s_r\bigr)}{s_1}\right\rfloor\ (l=0,1,2,\ldots).
\end{equation*}

\end{thm}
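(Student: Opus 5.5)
The plan is to prove the formula for $g_{S_l}$ first and then read off the Mory sequence from it by induction on $l$. For the formula, I would check that $f_l$ satisfies the hypothesis of Proposition~\ref{prop:fg} for $S_l$, by imitating the proof of Theorem~\ref{th:const}'s neighbour, Theorem~\ref{th:r001}. The cleanest way is to observe that $S_l$ itself satisfies the hypothesis of Theorem~\ref{th:r001} after rescaling. Order the elements of $S_l$ increasingly: $s_1<\cdots<s_r<s_1+s_r<2s_1+s_r<\cdots<ls_1+s_r$. Consecutive gaps are either $s_{j+1}-s_j\le s_1$ (by assumption) or exactly $s_1$. The smallest element is still $s_1$ and the largest is $M_l\coloneqq ls_1+s_r$.

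Hence Theorem~\ref{th:r001} applies to $S_l/M_l$, whose smallest element is $s_1/M_l$ and whose maximum is $1$. It gives
\begin{equation*}
g_{S_l/M_l}(y)=\left\lfloor\frac{y\bmod(1+s_1/M_l)}{s_1/M_l}\right\rfloor .
\end{equation*}
By Lemma~\ref{lem:mult} with $t=M_l$, we have $g_{S_l}(x)=g_{S_l/M_l}(x/M_l)$. Substituting and clearing the factor $M_l$ gives
\begin{equation*}
\left\lfloor\frac{x\bmod(M_l+s_1)}{s_1}\right\rfloor=f_l(x),
\end{equation*}
because $M_l+s_1=(l+1)s_1+s_r$. (Theorem~\ref{th:r001} is stated under the convention $s_r=1$, which is exactly why the rescaling step is needed.)

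Next, Theorem~\ref{th:r001} also tells us that the minimum period of $g_{S_l}$ is $(l+1)s_1+s_r$. I would double-check this directly. $f_l$ takes the value $0$ exactly on the intervals $[kP,kP+s_1)$ with $P=(l+1)s_1+s_r$, so any period $p$ must map one such interval onto another, forcing $p\in P\mathbb{Z}_{>0}$ for sufficiently large arguments. Care is needed only for a preperiodic shift. Still, the zero-set being a union of length-$s_1$ intervals spaced by $P$ forces $p$ to be a multiple of $P$ even with a preperiod, since $f_l$ vanishes on $[x,x+\varepsilon)$ only at those positions.

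Finally, induction on $i$ gives the Mory sequence. For $i=0$ we have $S_0=S$, and $p_0$ is the minimum period of $g_S$, namely $s_1+s_r$. Assume $p_0,\dots,p_{i-1}$ equal $s_1+s_r,\dots,is_1+s_r$. Then $S\cup\{p_0,\dots,p_{i-1}\}=S_i$, and by the formula its minimum period is $(i+1)s_1+s_r=p_i$.

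The main obstacle is the minimum-period claim. Everything else is a direct reduction to Theorem~\ref{th:r001}, which already asserts minimality, so I would simply cite it rather than reprove it.
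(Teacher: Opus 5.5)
Your proposal is correct and matches the paper's proof. The paper simply notes that each $S_l/\max S_l$ satisfies the hypothesis of Theorem~\ref{th:r001}; your gap check, the rescaling via Lemma~\ref{lem:mult}, and the induction on $i$ spell out exactly that argument. One small fix in your optional minimality check: $f_l$ vanishes on $[x,x+\varepsilon)$ for every $x\in[kP,kP+s_1)$, so the phrase ``only at those positions'' should refer to the left endpoints $kP$ of the zero intervals (the points where $f_l$ drops to $0$); since you cite Theorem~\ref{th:r001} for minimality anyway, this does not affect the proof.
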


\begin{proof}

Straightforward from Theorem~\ref{th:r001}, as each $\displaystyle\frac{S_l}{\max S_l}$ satisfies the assumption.
\end{proof}

\section{Future work}

\begin{enumerate}
\item The non-colored region in Fig.~\ref{fig:necktie}. We conjecture there is periodicity with the minimum preperiod $>0$ for the white region.
\item Let $ |S|= 4$ and calculate like Fig.~\ref{fig:rgb}.
\item Application 1: Ending-Partizan. It is the winning judge where the winner depends on the terminal position the last player makes, which was originally proposed by \citep{inoue2025ending}. For Continim $G(S)$, we can consider the game in which some $t\in[0,\min S)$ is given, two players $L$ and $R$ plays, and when the position $x$ gets less than $\min S$, the winner is the player $\begin{cases}
L &(x\in[0,t))\\
R &(x\in[t,\min S))
\end{cases}$.
In \citep{inoue2025ending}, dealing with the classic Subtraction Nim with $t=1$ and $\min S=2$, Inoue observed multiple games where the player $L$ can force a win from any sufficient large positions. Considering Continim, we may be able to find just the right border $t$ such that the game is fair to both of the two players $L$ and $R$.
\item Application 2: D.U.D.E.N.E.Y $D(S)$. It is the Subtraction Nim (on the normal play) where the player cannot choose the same removable number as the previous play. It is introduced in \citep{berlekamp2018winning} considering the case $S=\{1,2,\ldots,Y\}$ where $Y$ is a positive integer, and we got an idea of Continim while exploring the general case. Each position of D.U.D.E.N.E.Y is linked the information that it is initial one or obtained by subtracting certain $s\in S$. We are eager to find the Nim values of them for general $S$.
\end{enumerate}

\bibliography{conti}

\begin{appendices}

\section{Countably Infinite Continim}

For the classical Subtraction Nim, an infinite set is necessarily unbounded, while there are only finitely many choices from a position.
On the other hand, we can consider a countably infinite set that is bounded for the Continim. Here are some examples.

\begin{itemize}
\item $S\coloneqq\{1,\frac{3}{2},\frac{5}{3},\frac{7}{4},\ldots\}=\{2-\frac{1}{n}\mid n=1,2,\ldots\}$.
\begin{align*}
\begin{tikzpicture}
\node at (-.2,.22-1) [left] {$\vdots\quad$};
\node at (-.2,.22-.5) [left] {$g(x-\frac{7}{4})$};
\node at (-.2,.22) [left] {$g(x-\frac{5}{3})$};
\node at (-.2,.72) [left] {$g(x-\frac{3}{2})$};
\node at (-.2,1.22) [left] {$g(x-1)$};
\node at (-.2,1.72) [left] {$g(x)$};
\node at (-.2,2.22) [left] {$x\ $};
\cod{0}{0}
\cod{\epw}{1}
\cod{2*\epw}{2}
\cod{3*\epw}{3}
\cod{4*\epw}{4}
\cod{5*\epw}{5}
\cod{6*\epw}{6}
\cod{3/2*\epw}{\frac{3}{2}}
\cod{5/3*\epw}{\frac{5}{3}}
\cod{7/4*\epw}{\ \frac{7}{4}}
\cod{9/5*\epw}{}
\cod{11/6*\epw}{}
\cod{13/7*\epw}{}
\cod{15/8*\epw}{}
\gbox{1}{0}{\epw}{0} \lper{1}{0}
\gbox{1}{\epw}{2*\epw}{1}
\gbox{1}{2*\epw}{3*\epw}{2} \rper{1}{3*\epw}
\gbox{1}{3*\epw}{4*\epw}{0} \lper{1}{3*\epw}
\gbox{1}{4*\epw}{5*\epw}{1}
\gbox{1}{5*\epw}{6*\epw}{2} \rper{1}{6*\epw}
\gbox{2}{\epw}{2*\epw}{0} \lper{2}{\epw}
\gbox{2}{2*\epw}{3*\epw}{1}
\gbox{2}{3*\epw}{4*\epw}{2} \rper{2}{4*\epw}
\gbox{2}{4*\epw}{5*\epw}{0} \lper{2}{4*\epw}
\gbox{2}{5*\epw}{6*\epw}{1}
\gbox{3}{3/2*\epw}{5/2*\epw}{0} \lper{3}{3/2*\epw}
\gbox{3}{5/2*\epw}{7/2*\epw}{1}
\gbox{3}{7/2*\epw}{9/2*\epw}{2} \rper{3}{9/2*\epw}
\gbox{3}{9/2*\epw}{11/2*\epw}{0} \lper{3}{9/2*\epw}
\gbox{4}{5/3*\epw}{8/3*\epw}{0} \lper{4}{5/3*\epw}
\gbox{4}{8/3*\epw}{11/3*\epw}{1}
\gbox{4}{11/3*\epw}{14/3*\epw}{2} \rper{4}{14/3*\epw}
\gbox{4}{14/3*\epw}{17/3*\epw}{0} \lper{4}{14/3*\epw}
\gbox{5}{7/4*\epw}{11/4*\epw}{0} \lper{5}{7/4*\epw}
\gbox{5}{11/4*\epw}{15/4*\epw}{1}
\gbox{5}{15/4*\epw}{19/4*\epw}{2} \rper{5}{19/4*\epw}
\gbox{5}{19/4*\epw}{23/4*\epw}{0} \lper{5}{19/4*\epw}
\end{tikzpicture}
\end{align*}
We get $g(x)=\lfloor x\rfloor \bmod 3\ (x\geq 0)$, which is identical to the Nim value for $\{1,2\}$.
\item $S\coloneqq\{2,\frac{3}{2},\frac{4}{3},\frac{5}{4},\ldots\}=\{1+\frac{1}{n}\mid n=1,2,\ldots\}$. 
\begin{align*}
\begin{tikzpicture}
\node at (-.2,.22-1) [left] {$\vdots\quad$};
\node at (-.2,.22-.5) [left] {$g(x-\frac{7}{4})$};
\node at (-.2,.22) [left] {$g(x-\frac{5}{3})$};
\node at (-.2,.72) [left] {$g(x-\frac{3}{2})$};
\node at (-.2,1.22) [left] {$g(x-1)$};
\node at (-.2,1.72) [left] {$g(x)$};
\node at (-.2,2.22) [left] {$x\ $};
\cod{0}{0}
\cod{\epw}{1}
\cod{2*\epw}{2}
\cod{3*\epw}{3}
\cod{4*\epw}{4}
\cod{5*\epw}{5}
\cod{6*\epw}{6}
\cod{3/2*\epw}{\frac{3}{2}}
\cod{4/3*\epw}{\frac{4}{3}}
\cod{5/4*\epw}{\frac{5}{4}\ }
\cod{6/5*\epw}{}
\cod{7/6*\epw}{}
\cod{8/7*\epw}{}
\cod{9/8*\epw}{}
\gboxxx{1}{0}{\epw}{0} \lperr{1}{0}
\gboxx{1}{\epw}{2*\epw}{1}
\gboxx{1}{2*\epw}{3*\epw}{2} \rperr{1}{3*\epw}
\gboxx{1}{3*\epw}{4*\epw}{0} \lperr{1}{3*\epw}
\gboxx{1}{4*\epw}{5*\epw}{1}
\gboxx{1}{5*\epw}{6*\epw}{2} \rperr{1}{6*\epw}
\gboxxx{2}{2*\epw}{3*\epw}{0} \lperr{2}{2*\epw}
\gboxx{2}{3*\epw}{4*\epw}{1}
\gboxx{2}{4*\epw}{5*\epw}{2} \rperr{2}{5*\epw}
\gboxx{2}{5*\epw}{6*\epw}{0} \lperr{2}{5*\epw}
\gboxxx{3}{3/2*\epw}{5/2*\epw}{0} \lperr{3}{3/2*\epw}
\gboxx{3}{5/2*\epw}{7/2*\epw}{1}
\gboxx{3}{7/2*\epw}{9/2*\epw}{2} \rperr{3}{9/2*\epw}
\gboxx{3}{9/2*\epw}{11/2*\epw}{0} \lperr{3}{9/2*\epw}
\gboxxx{4}{4/3*\epw}{7/3*\epw}{0} \lperr{4}{4/3*\epw}
\gboxx{4}{7/3*\epw}{10/3*\epw}{1}
\gboxx{4}{10/3*\epw}{13/3*\epw}{2} \rperr{4}{13/3*\epw}
\gboxx{4}{13/3*\epw}{16/3*\epw}{0} \lperr{4}{13/3*\epw}
\gboxxx{5}{5/4*\epw}{9/4*\epw}{0} \lperr{5}{5/4*\epw}
\gboxx{5}{9/4*\epw}{13/4*\epw}{1}
\gboxx{5}{13/4*\epw}{17/4*\epw}{2} \rperr{5}{17/4*\epw}
\gboxx{5}{17/4*\epw}{21/4*\epw}{0} \lperr{5}{17/4*\epw}
\end{tikzpicture}
\end{align*}
The position $1$ is terminal and the position that is slightly greater than $1$ is playable once. In fact, we get 
\begin{equation*}
g(0)=0\text{ and }g(x)=(\lceil x\rceil-1)\bmod 3\ (x>0).
\end{equation*}
In this case, the Nim value function has the periodicity that does not acculately meet Definition~\ref{df:period}, namely
\begin{equation*}
g(x+p)=g(x)\ (x>x_0)\text{ but }g(x_0+p)\neq g(x_0)
\end{equation*}
for $x_0=0$ and $p=3$.
\item $S\coloneqq\{\frac{1}{n}\mid n=1,2,\ldots\}$. As $\inf S=0$, the game can be arbitrarily long. We can get immediately that all of the positions except $0$ and the removable numbers themselves are the Draw positions.
\end{itemize}

\begin{landscape}

\section{Proof of Theorem~\ref{th:necktie} and Remarks} \label{appb}

\begin{itemize}
\item For $R_n^2$ where $ a< \lambda=1-a-nm \leq b$,
\begin{align}
&\begin{tikzpicture}
\node at (-.6,0.22) [left] {$f(x-1)$};
\node at (-.6,0.72) [left] {$f(x-b)$};
\node at (-.6,1.22) [left] {$f(x-a)$};
\node at (-.6,1.72) [left] {$f(x)$};
\node at (-.6,2.22) [left] {$x\ $};
\cod{0}{nm+a}
\cod{\bbb-\aaa}{nm+b}
\cod{\aaa}{nm+2a\qquad}
\cod{\aaa/2+\bbb/2}{1}
\cod{\bbb}{\qquad (n+1)m}
\cod{2*\aaa}{nm+3a\qquad}
\cod{3/2*\aaa+\bbb/2}{\begin{tabular}{c}$1+a$\\$\downarrow$\end{tabular}}
\cod{\mmm}{\qquad\qquad(n+1)m+a}
\cod{3*\aaa}{nm+4a\qquad}
\cod{5/2*\aaa+\bbb/2}{\qquad 1+2a}
\gbox{1}{0}{\aaa}{1}
\gbox{1}{\aaa}{2*\aaa}{2} 
\gbox{1}{2*\aaa}{3/2*\aaa+\bbb/2}{3} \rper{1}{3/2*\aaa+\bbb/2}
\gbox{1}{3/2*\aaa+\bbb/2}{5/2*\aaa+\bbb/2}{0} \lper{1}{3/2*\aaa+\bbb/2}
\gbox{2}{0}{\aaa}{0}
\gbox{2}{\aaa}{2*\aaa}{1}
\gbox{2}{2*\aaa}{3*\aaa}{2} 
\gbox{2}{3*\aaa}{5/2*\aaa+\bbb/2}{3} \rper{2}{5/2*\aaa+\bbb/2}
\gbox{3}{0}{\bbb-\aaa}{2}
\gbox{3}{\bbb-\aaa}{\bbb}{0}
\gbox{3}{\bbb}{\mmm}{1}
\gbox{3}{\mmm}{\mmm+\aaa}{2}
\gbox{4}{\aaa/2+\bbb/2}{3/2*\aaa+\bbb/2}{0} \lper{4}{\aaa/2+\bbb/2}
\gbox{4}{3/2*\aaa+\bbb/2}{5/2*\aaa+\bbb/2}{1}
\end{tikzpicture}\tag{$n$}
\end{align}
Due to $f(x-1)=0$ for $(n+1)m\leq x<1+a$, the box $f(x)=2$ is stretched to the length $a$ and the new box $f(x)=3$ occurs. They trim the next box $f(x)=0$, but the stretched box $f(x-a)=2$ and the newly occurred box $f(x-a)=3$ stretches it.
\begin{align}
&\begin{tikzpicture}
\node at (\bbb/2-3/2*\aaa-.2,0.22) [left] {$f(x-1)$};
\node at (\bbb/2-3/2*\aaa-.2,0.72) [left] {$f(x-b)$};
\node at (\bbb/2-3/2*\aaa-.2,1.22) [left] {$f(x-a)$};
\node at (\bbb/2-3/2*\aaa-.2,1.72) [left] {$f(x)$};
\node at (\bbb/2-3/2*\aaa-.2,2.22) [left] {$x\ $};
\cod{0}{1+2a\quad}
\cod{\bbb/2-\aaa/2}{\begin{tabular}{c}$(n+1)m+2a$\\$\downarrow$\end{tabular}}
\cod{\bbb-\aaa}{\qquad 1+a+b}
\cod{\aaa}{1+3a}
\cod{\bbb}{\quad 1+2a+b}
\cod{2*\aaa}{1+4a}
\cod{\mmm}{\quad 1+3a+b}
\gbox{1}{0}{\aaa}{1}
\gbox{1}{\aaa}{\bbb}{2}
\gbox{1}{\bbb}{\mmm}{0}
\gbox{2}{0}{\aaa}{0} \lper{2}{0}
\gbox{2}{\aaa}{2*\aaa}{1}
\gbox{2}{2*\aaa}{\mmm}{2}
\gbox{3}{\bbb/2-3/2*\aaa}{\bbb/2-\aaa/2}{2}
\gbox{3}{\bbb/2-\aaa/2}{\bbb-\aaa}{3} \rper{3}{\bbb-\aaa}
\gbox{3}{\bbb-\aaa}{\bbb}{0} \lper{3}{\bbb-\aaa}
\gbox{3}{\bbb}{\mmm}{1}
\gbox{4}{0}{\bbb-\aaa}{2}
\gbox{4}{\bbb-\aaa}{\bbb}{0}
\gbox{4}{\bbb}{\mmm}{1}
\end{tikzpicture}\tag{$n+1$}
\end{align}
Again we have $f(x-1)=f(x-b)$ for $1+a+b\leq x<1+mn+2a$.

\begin{align}
&\begin{tikzpicture}
\node at (-.2,0.22) [left] {$f(x-1)$};
\node at (-.2,0.72) [left] {$f(x-b)$};
\node at (-.2,1.22) [left] {$f(x-a)$};
\node at (-.2,1.72) [left] {$f(x)$};
\node at (-.2,2.22) [left] {$x\ $};
\cod{0}{\begin{tabular}{c}$1+nm+2a$\\$\downarrow$\end{tabular}}
\cod{\bbb-\aaa}{\quad 1+(n+1)m}
\cod{\aaa}{1+nm+3a\qquad\qquad}
\cod{\aaa/2+\bbb/2}{\begin{tabular}{c}$2+a$\\$\downarrow$\end{tabular}}
\cod{\bbb}{\qquad\qquad\qquad 1+(n+1)m+a}
\gbox{1}{0}{\aaa}{1}
\gbox{1}{\aaa}{2*\aaa}{2} 
\gbox{2}{0}{\aaa}{0}
\gbox{2}{\aaa}{2*\aaa}{1}
\gbox{3}{0}{\bbb-\aaa}{2}
\gbox{3}{\bbb-\aaa}{\bbb}{0}
\gbox{3}{\bbb}{\mmm}{1}
\gbox{4}{0}{\aaa}{2}
\gbox{4}{\aaa}{\aaa/2+\bbb/2}{3} \rper{4}{\aaa/2+\bbb/2}
\gbox{4}{\aaa/2+\bbb/2}{3/2*\aaa+\bbb/2}{0} \lper{4}{\aaa/2+\bbb/2}
\end{tikzpicture}\tag{$2n+1$}
\end{align}
Thus $ f(x) =\mex\{f( x-s) \mid s\in S\}$ for $ 1\leq x< 2+a$.\\

\item For $W_n^2$ where $3a-b\leq\lambda=1-a-(n-1)m<2b-a$,
\begin{align*}
&\begin{tikzpicture}
\node at (\aaa-\bbc-.2,0.22) [left] {$\varphi(x-1)$};
\node at (\aaa-\bbc-.2,0.72) [left] {$\varphi(x-b)$};
\node at (\aaa-\bbc-.2,1.22) [left] {$\varphi(x-a)$};
\node at (\aaa-\bbc-.2,1.72) [left] {$\varphi(x)$};
\node at (\aaa-\bbc-.2,2.22) [left] {$x\ $};
\cod{\aaa-\bbc/2}{1}
\cod{2*\aaa-\bbc}{\quad(n-1)m+3a}
\cod{\aaa}{\qquad nm+a}
\cod{2*\aaa-\bbc/2}{\begin{tabular}{c}$1+a\qquad$\\$\downarrow$\end{tabular}}
\cod{\bbc}{\quad\  nm+b}
\cod{3*\aaa-\bbc}{\begin{tabular}{c}$(n-1)m+4a$\quad\\$\downarrow$\end{tabular}}
\cod{\aaa+\bbc/2}{\quad 1+b}
\cod{2*\aaa}{\begin{tabular}{c}\quad$nm+2a$\\$\downarrow$\end{tabular}}
\cod{3*\aaa-\bbc/2}{1+2a\qquad\quad}
\cod{\mmc}{\begin{tabular}{c}$(n+1)m$\quad\ \\$\downarrow$\end{tabular}}
\cod{4*\aaa-\bbc}{\begin{tabular}{c}$\qquad\qquad\quad (n-1)m+5a$\\$\downarrow$\end{tabular}}
\cod{2*\aaa+\bbc/2}{\qquad\quad 1+a+b}
\gbox{1}{\aaa-\bbc/2}{2*\aaa-\bbc}{2}
\gbox{1}{2*\aaa-\bbc}{\aaa}{3}
\gbox{1}{\aaa}{2*\aaa-\bbc/2}{1}
\gbox{1}{2*\aaa-\bbc/2}{\bbc}{0}
\gbox{1}{\bbc}{3*\aaa-\bbc}{3}
\gbox{1}{3*\aaa-\bbc}{\aaa+\bbc/2}{2}
\gbox{1}{\aaa+\bbc/2}{3*\aaa-\bbc/2}{0}
\gbox{1}{3*\aaa-\bbc/2}{\mmc}{1}
\gbox{1}{\mmc}{2*\aaa+\bbc/2}{0}
\gbox{2}{\aaa-\bbc}{2*\aaa-\bbc}{1}
\gbox{2}{2*\aaa-\bbc}{\aaa}{2}
\gbox{2}{\aaa}{2*\aaa-\bbc/2}{0}
\gbox{2}{2*\aaa-\bbc/2}{3*\aaa-\bbc}{2}
\gbox{2}{3*\aaa-\bbc}{2*\aaa}{3}
\gbox{2}{2*\aaa}{3*\aaa-\bbc/2}{1}
\gbox{2}{3*\aaa-\bbc/2}{\mmc}{0}
\gbox{2}{\mmc}{4*\aaa-\bbc}{3}
\gbox{2}{4*\aaa-\bbc}{2*\aaa+\bbc/2}{2}
\gbox{3}{0}{\aaa}{1}
\gbox{3}{\aaa}{\bbc}{2}
\gbox{3}{\bbc}{\aaa+\bbc/2}{0}
\gbox{3}{\aaa+\bbc/2}{2*\aaa}{2}
\gbox{3}{2*\aaa}{\mmc}{3}
\gbox{3}{\mmc}{2*\aaa+\bbc/2}{1}
\gbox{4}{\aaa-\bbc/2}{2*\aaa-\bbc/2}{0}
\gbox{4}{2*\aaa-\bbc/2}{3*\aaa-\bbc/2}{1}
\gbox{4}{3*\aaa-\bbc/2}{2*\aaa+\bbc/2}{2}
\end{tikzpicture}
\end{align*}

\newpage
\item For $W_n^3$ where $b<\lambda=1-a-(n-1)m<2a<2b-a$,
\begin{align*}
&\begin{tikzpicture}
\node at (\aaa-\bbd-.2,0.22) [left] {$\varphi(x-1)$};
\node at (\aaa-\bbd-.2,0.72) [left] {$\varphi(x-b)$};
\node at (\aaa-\bbd-.2,1.22) [left] {$\varphi(x-a)$};
\node at (\aaa-\bbd-.2,1.72) [left] {$\varphi(x)$};
\node at (\aaa-\bbd-.2,2.22) [left] {$x\ $};
\cod{\aaa-\bbd/2}{1}
\cod{2*\aaa-\bbd}{\quad\qquad(n-1)m+3a}
\cod{\aaa}{\begin{tabular}{c}$nm+a$\\$\downarrow$\end{tabular}}
\cod{2*\aaa-\bbd/2}{1+a\quad}
\cod{3*\aaa-\bbd}{\begin{tabular}{c}\quad$(n-1)m+4a$\\$\downarrow$\end{tabular}}
\cod{\bbd}{nm+b\ }
\cod{\aaa+\bbd/2}{\quad 1+b}
\cod{2*\aaa}{\begin{tabular}{c}$nm+2a$\\$\downarrow$\end{tabular}}
\cod{3*\aaa-\bbd/2}{1+2a\ }
\cod{\mmd}{(n+1)m\qquad\quad\ \ }
\cod{2*\aaa+\bbd/2}{1+a+b}
\gbox{1}{\aaa-\bbd/2}{2*\aaa-\bbd}{2}
\gbox{1}{2*\aaa-\bbd}{\aaa}{3}
\gbox{1}{\aaa}{2*\aaa-\bbd/2}{1}
\gbox{1}{2*\aaa-\bbd/2}{\bbd}{0}
\gbox{1}{\bbd}{\aaa+\bbd/2}{2}
\gbox{1}{\aaa+\bbd/2}{3*\aaa-\bbd/2}{0}
\gbox{1}{3*\aaa-\bbd/2}{\mmd}{1}
\gbox{1}{\mmd}{2*\aaa+\bbd/2}{0}
\gbox{2}{\aaa-\bbd}{2*\aaa-\bbd}{1}
\gbox{2}{2*\aaa-\bbd}{\aaa}{2}
\gbox{2}{\aaa}{2*\aaa-\bbd/2}{0}
\gbox{2}{2*\aaa-\bbd/2}{3*\aaa-\bbd}{2}
\gbox{2}{3*\aaa-\bbd}{2*\aaa}{3}
\gbox{2}{2*\aaa}{3*\aaa-\bbd/2}{1}
\gbox{2}{3*\aaa-\bbd/2}{\mmd}{0}
\gbox{2}{\mmd}{2*\aaa+\bbd/2}{2}
\gbox{3}{0}{\aaa}{1}
\gbox{3}{\aaa}{\bbd}{2}
\gbox{3}{\bbd}{\aaa+\bbd/2}{0}
\gbox{3}{\aaa+\bbd/2}{2*\aaa}{2}
\gbox{3}{2*\aaa}{\mmd}{3}
\gbox{3}{\mmd}{2*\aaa+\bbd/2}{1}
\gbox{4}{\aaa-\bbd/2}{2*\aaa-\bbd/2}{0}
\gbox{4}{2*\aaa-\bbd/2}{3*\aaa-\bbd/2}{1}
\gbox{4}{3*\aaa-\bbd/2}{2*\aaa+\bbd/2}{2}
\end{tikzpicture}
\end{align*}

\item For $W_n^4$ where $2a\leq\lambda=1-a-(n-1)m<2b-a$,
\begin{align*}
&\begin{tikzpicture}
\node at (-.2,0.22) [left] {$\varphi(x-1)$};
\node at (-.2,0.72) [left] {$\varphi(x-b)$};
\node at (-.2,1.22) [left] {$\varphi(x-a)$};
\node at (-.2,1.72) [left] {$\varphi(x)$};
\node at (-.2,2.22) [left] {$x\ $};
\cod{\aaa/2}{1}
\cod{\aaa}{nm+a}
\cod{3*\aaa/2}{1+a\qquad\ }
\cod{\bbd}{\qquad\  nm+b}
\cod{\aaa/2+\bbd}{\quad 1+b}
\cod{2*\aaa}{\begin{tabular}{c}$nm+2a$\\$\downarrow$\end{tabular}}
\cod{5*\aaa/2}{1+2a\qquad\ }
\cod{\mmd}{\qquad\quad (n+1)m}
\cod{3*\aaa/2+\bbd}{1+a+b}
\gbox{1}{\aaa/2}{\aaa}{3}
\gbox{1}{\aaa}{3*\aaa/2}{1}
\gbox{1}{3*\aaa/2}{\bbd}{0}
\gbox{1}{\bbd}{\aaa/2+\bbd}{2}
\gbox{1}{\aaa/2+\bbd}{5*\aaa/2}{0}
\gbox{1}{5*\aaa/2}{\mmd}{1}
\gbox{1}{\mmd}{3/2*\aaa+\bbd}{0}
\gbox{2}{2*\aaa-\bbd}{\aaa}{2}
\gbox{2}{\aaa}{3*\aaa/2}{0}
\gbox{2}{3*\aaa/2}{2*\aaa}{3}
\gbox{2}{2*\aaa}{5*\aaa/2}{1}
\gbox{2}{5*\aaa/2}{\mmd}{0}
\gbox{2}{\mmd}{3*\aaa/2+\bbd}{2}
\gbox{3}{0}{\aaa}{1}
\gbox{3}{\aaa}{\bbd}{2}
\gbox{3}{\bbd}{\aaa/2+\bbd}{0}
\gbox{3}{\aaa/2+\bbd}{\mmd}{3}
\gbox{3}{\mmd}{3*\aaa/2+\bbd}{1}
\gbox{4}{\aaa/2}{3*\aaa/2}{0}
\gbox{4}{3*\aaa/2}{5*\aaa/2}{1}
\gbox{4}{5*\aaa/2}{3*\aaa/2+\bbd}{2}
\end{tikzpicture}
\end{align*}

\newpage
\item For $B_n$.
\begin{enumerate}
\item \label{b1} For $B_n^1$ where $ 2b-a\leq \lambda=1-a-(n-1)m < 3a-b( \leq 2a\leq a+b)$,
\begin{align}
&\begin{tikzpicture}
\node at (-.2,0.22) [left] {$f(x-1)$};
\node at (-.2,0.72) [left] {$f(x-b)$};
\node at (-.2,1.22) [left] {$f(x-a)$};
\node at (-.2,1.72) [left] {$f(x)$};
\node at (-.2,2.22) [left] {$x\ $};
\cod{0}{\begin{tabular}{c}$(n-1)m+a$\\$\downarrow$\end{tabular}}
\cod{\bbb-\aaa}{\quad(n-1)m+b}
\cod{\aaa}{(n-1)m+2a\qquad}
\cod{\bbb}{nm}
\cod{\aaa+\bbb/2}{1}
\cod{2*\aaa}{\begin{tabular}{c}$(n-1)m+3a$\\$\downarrow$\end{tabular}}
\cod{\mmm}{nm+a}
\gbox{1}{0}{\aaa}{1}
\gbox{1}{\aaa}{\bbb}{2} 
\gbox{1}{\bbb}{\aaa+\bbb/2}{0}
\gbox{1}{\aaa+\bbb/2}{2*\aaa}{2}
\gbox{1}{2*\aaa}{\mmm}{3}
\gbox{2}{0}{\aaa}{0}
\gbox{2}{\aaa}{2*\aaa}{1}
\gbox{2}{2*\aaa}{\mmm}{2}
\gbox{3}{0}{\bbb-\aaa}{2}
\gbox{3}{\bbb-\aaa}{\bbb}{0}
\gbox{3}{\bbb}{\mmm}{1}
\gbox{4}{\aaa+\bbb/2}{2*\aaa+\bbb/2}{0} \lper{4}{\aaa+\bbb/2}
\end{tikzpicture}\tag{$n-1$}\\
&\begin{tikzpicture}
\node at (\aaa+\bbb/2-.2,0.22) [left] {$f(x-1)$};
\node at (\aaa+\bbb/2-.2,0.72) [left] {$f(x-b)$};
\node at (\aaa+\bbb/2-.2,1.22) [left] {$f(x-a)$};
\node at (\aaa+\bbb/2-.2,1.72) [left] {$f(x)$};
\node at (\aaa+\bbb/2-.2,2.22) [left] {$x\ $};
\cod{\mmm}{\begin{tabular}{c}$nm+a$\\$\downarrow$\end{tabular}}
\cod{2*\bbb}{nm+b\qquad\quad}
\cod{2*\aaa+\bbb/2}{\begin{tabular}{c}$1+a$\\$\downarrow$\end{tabular}}
\cod{\aaa+3/2*\bbb}{1+b\qquad}
\cod{3*\aaa}{\begin{tabular}{c}$\quad(n-1)m+4a$\\$\downarrow$\end{tabular}}
\cod{2*\aaa+\bbb}{nm+2a}
\cod{\aaa+2*\bbb}{\begin{tabular}{c}$(n+1)m$\\$\downarrow$\end{tabular}}
\cod{3*\aaa+\bbb/2}{\qquad\quad 1+2a}
\cod{2*\aaa+3/2*\bbb}{\begin{tabular}{c}$1+a+b$\\$\downarrow$\end{tabular}}
\gbox{1}{\mmm}{2*\aaa+\bbb/2}{1}
\gbox{1}{2*\aaa+\bbb/2}{\aaa+3/2*\bbb}{3} \rper{1}{\aaa+3/2*\bbb}
\gbox{1}{\aaa+3/2*\bbb}{2*\aaa+3/2*\bbb}{0} \lper{1}{\aaa+3/2*\bbb}
\gbox{2}{\mmm}{2*\aaa+\bbb/2}{0}
\gbox{2}{2*\aaa+\bbb/2}{3*\aaa}{2}
\gbox{2}{3*\aaa}{2*\aaa+\bbb}{3}
\gbox{2}{2*\aaa+\bbb}{3*\aaa+\bbb/2}{1}
\gbox{2}{3*\aaa+\bbb/2}{2*\aaa+3/2*\bbb}{3} \rper{2}{2*\aaa+3/2*\bbb}
\gbox{3}{\mmm}{2*\bbb}{2}
\gbox{3}{2*\bbb}{\aaa+3/2*\bbb}{0}
\gbox{3}{\aaa+3/2*\bbb}{2*\aaa+\bbb}{2}
\gbox{3}{2*\aaa+\bbb}{\aaa+2*\bbb}{3}
\gbox{3}{\aaa+2*\bbb}{2*\aaa+3/2*\bbb}{1}
\gbox{4}{\aaa+\bbb/2}{2*\aaa+\bbb/2}{0} \lper{4}{\aaa+\bbb/2}
\gbox{4}{2*\aaa+\bbb/2}{3*\aaa+\bbb/2}{1}
\gbox{4}{3*\aaa+\bbb/2}{2*\aaa+3/2*\bbb}{2}
\end{tikzpicture}\tag{$n$}
\end{align}
In the diagram $\langle n \rangle$, the boxes $f(x-a)=0$ and $f(x-1)=0$ end at the same time $x=1+a$, and the box $f(x-b)=0$ begins at $x=nm+b$. $2b-a\leq \lambda$ implies $nm+b\leq 1+a$, so the box $f(x-b)=0$ begins before the boxes $f(x-a)=0$ and $f(x-1)=0$ end.
After the box $f(x-b)=0$ ends $ 1+b$, all of $ f( x-s)$ happen to be positive and then the box $f(x)=0$ occurs and lasts until the end of the boxes $f(x-a)>0$. The timing is $x=1+a+b$, the same as when the box $f(x-1)=2$ ends.

\begin{align}
&\begin{tikzpicture}
\node at (-.7,0.22) [left] {$f(x-1)$};
\node at (-.7,0.72) [left] {$f(x-b)$};
\node at (-.7,1.22) [left] {$f(x-a)$};
\node at (-.7,1.72) [left] {$f(x)$};
\node at (-.7,2.22) [left] {$x\ $};
\cod{0}{1+a+b}
\cod{\bbb-\aaa}{1+2b}
\cod{\aaa}{1+2a+b\qquad}
\cod{\bbb}{\qquad 1+a+2b}
\cod{2*\aaa}{1+3a+b\qquad}
\cod{\mmm}{\qquad 1+2a+2b}
\gbox{1}{0}{\aaa}{1}
\gbox{1}{\aaa}{\bbb}{2}
\gbox{1}{\bbb}{\aaa+\bbb}{0}
\gbox{2}{0}{\aaa}{0}
\gbox{2}{\aaa}{2*\aaa}{1}
\gbox{2}{2*\aaa}{\aaa+\bbb}{2}
\gbox{3}{0}{\bbb-\aaa}{3} \rper{3}{\bbb-\aaa}
\gbox{3}{\bbb-\aaa}{\bbb}{0} \lper{3}{\bbb-\aaa}
\gbox{3}{\bbb}{\aaa+\bbb}{1}
\gbox{4}{0}{\aaa}{0}
\gbox{4}{\aaa}{2*\aaa}{1}
\gbox{4}{2*\aaa}{\aaa+\bbb}{2}
\end{tikzpicture}\tag{$n+1$}
\end{align}
Similarly we have
\begin{align*}
f(x-1)&=f(x-(a+b)-1) &&(\text{the local period }a+b)\\
      &=f(x-(1+b)-a)\\
      &=f(x-a).      &&(\text{the period }1+b)
\end{align*}
for $1+a+b\leq x<1+nm+2a$.

\begin{align}
&\begin{tikzpicture}
\node at (-.6,0.22) [left] {$f(x-1)$};
\node at (-.6,0.72) [left] {$f(x-b)$};
\node at (-.6,1.22) [left] {$f(x-a)$};
\node at (-.6,1.72) [left] {$f(x)$};
\node at (-.6,2.22) [left] {$x\ $};
\cod{0}{1+nm}
\cod{\bbb-\aaa}{\begin{tabular}{c}$1+(n-1)m+2b$\\$\downarrow$\end{tabular}}
\cod{\aaa-\bbb/2}{2}
\cod{2*\aaa-\bbb}{1\!+\!(n\!-\!1)m\!+\!3a\ }
\cod{\aaa}{\begin{tabular}{c}$1+nm+a$\qquad\ \\$\downarrow$\end{tabular}}
\cod{\bbb}{\begin{tabular}{c}\qquad\ $1+nm+b$\\$\downarrow$\end{tabular}}
\cod{2*\aaa-\bbb/2}{\qquad 2+a}
\cod{\aaa+\bbb/2}{2+b}
\cod{2*\aaa}{1+nm+2a}
\gbox{1}{0}{\aaa}{1}
\gbox{1}{\aaa}{\bbb}{2} 
\gbox{1}{\bbb}{\aaa+\bbb/2}{0}
\gbox{1}{\aaa+\bbb/2}{2*\aaa}{2}
\gbox{2}{0}{\aaa}{0}
\gbox{2}{\aaa}{2*\aaa}{1}
\gbox{3}{0}{\bbb-\aaa}{2}
\gbox{3}{\bbb-\aaa}{\bbb}{0}
\gbox{3}{\bbb}{\mmm}{1}
\gbox{4}{0}{\aaa-\bbb/2}{0}
\gbox{4}{\aaa-\bbb/2}{2*\aaa-\bbb}{2}
\gbox{4}{2*\aaa-\bbb}{\aaa}{3}
\gbox{4}{\aaa}{2*\aaa-\bbb/2}{1}
\gbox{4}{2*\aaa-\bbb/2}{\aaa+\bbb/2}{3} \rper{4}{\aaa+\bbb/2}
\gbox{4}{\aaa+\bbb/2}{2*\aaa+\bbb/2}{0} \lper{4}{\aaa+\bbb/2}
\end{tikzpicture}\tag{$2n$}
\end{align}

Thus $ f(x) =\mex\{f( x-s) \mid s\in S\}$ for $ 1\leq x< 2+b$.

\newpage
\item For $B_n^2$ where $ \max( 2b-a,3a-b) \leq \lambda=1-a-(n-1)m < 2a( \leq a+b)$,
\begin{align}
&\begin{tikzpicture}
\node at (-.2,0.22) [left] {$f(x-1)$};
\node at (-.2,0.72) [left] {$f(x-b)$};
\node at (-.2,1.22) [left] {$f(x-a)$};
\node at (-.2,1.72) [left] {$f(x)$};
\node at (-.2,2.22) [left] {$x\ $};
\cod{0}{\begin{tabular}{c}$(n-1)m+a$\\$\downarrow$\end{tabular}}
\cod{\bbb-\aaa}{\quad(n-1)m+b}
\cod{\aaa}{(n-1)m+2a\qquad}
\cod{\bbb}{nm}
\cod{5/2*\aaa-\bbb/2}{1}
\cod{2*\aaa}{\begin{tabular}{c}$(n-1)m+3a$\\$\downarrow$\end{tabular}}
\cod{\mmm}{nm+a}
\gbox{1}{0}{\aaa}{1}
\gbox{1}{\aaa}{\bbb}{2} 
\gbox{1}{\bbb}{5/2*\aaa-\bbb/2}{0}
\gbox{1}{5/2*\aaa-\bbb/2}{2*\aaa}{2}
\gbox{1}{2*\aaa}{\mmm}{3}
\gbox{2}{0}{\aaa}{0}
\gbox{2}{\aaa}{2*\aaa}{1}
\gbox{2}{2*\aaa}{\mmm}{2}
\gbox{3}{0}{\bbb-\aaa}{2}
\gbox{3}{\bbb-\aaa}{\bbb}{0}
\gbox{3}{\bbb}{\mmm}{1}
\gbox{4}{5/2*\aaa-\bbb/2}{2*\aaa+\bbb/2}{0} \lper{4}{5/2*\aaa-\bbb/2}
\end{tikzpicture}\tag{$n-1$}\\
&\begin{tikzpicture}
\node at (5/2*\aaa-\bbb/2-.2,0.22) [left] {$f(x-1)$};
\node at (5/2*\aaa-\bbb/2-.2,0.72) [left] {$f(x-b)$};
\node at (5/2*\aaa-\bbb/2-.2,1.22) [left] {$f(x-a)$};
\node at (5/2*\aaa-\bbb/2-.2,1.72) [left] {$f(x)$};
\node at (5/2*\aaa-\bbb/2-.2,2.22) [left] {$x\ $};
\cod{\mmm}{\begin{tabular}{c}$nm+a$\\$\downarrow$\end{tabular}}
\cod{2*\bbb}{nm+b\quad}
\cod{7/2*\aaa-\bbb/2}{1+a}
\cod{3*\aaa}{\begin{tabular}{c}$(n-1)m+4a$\\$\downarrow$\end{tabular}}
\cod{5/2*\aaa+\bbb/2}{1+b\ }
\cod{2*\aaa+\bbb}{\qquad\quad nm+2a}
\cod{\aaa+2*\bbb}{\begin{tabular}{c}$(n+1)m$\\$\downarrow$\end{tabular}}
\cod{9/2*\aaa-\bbb/2}{1+2a\quad}
\cod{4*\aaa}{\begin{tabular}{c}$\quad(n-1)m+5a$\\$\downarrow$\end{tabular}}
\cod{7/2*\aaa+\bbb/2}{\qquad 1+a+b}
\gbox{1}{\mmm}{7/2*\aaa-\bbb/2}{1}
\gbox{1}{7/2*\aaa-\bbb/2}{3*\aaa}{3}
\gbox{1}{3*\aaa}{5/2*\aaa+\bbb/2}{2} \rper{1}{5/2*\aaa+\bbb/2}
\gbox{1}{5/2*\aaa+\bbb/2}{7/2*\aaa+\bbb/2}{0} \lper{1}{5/2*\aaa+\bbb/2}
\gbox{2}{\mmm}{7/2*\aaa-\bbb/2}{0}
\gbox{2}{7/2*\aaa-\bbb/2}{3*\aaa}{2}
\gbox{2}{3*\aaa}{2*\aaa+\bbb}{3}
\gbox{2}{2*\aaa+\bbb}{9/2*\aaa-\bbb/2}{1}
\gbox{2}{9/2*\aaa-\bbb/2}{4*\aaa}{3}
\gbox{2}{4*\aaa}{7/2*\aaa+\bbb/2}{2} \rper{2}{7/2*\aaa+\bbb/2}
\gbox{3}{\mmm}{2*\bbb}{2}
\gbox{3}{2*\bbb}{5/2*\aaa+\bbb/2}{0}
\gbox{3}{5/2*\aaa+\bbb/2}{2*\aaa+\bbb}{2}
\gbox{3}{2*\aaa+\bbb}{\aaa+2*\bbb}{3}
\gbox{3}{\aaa+2*\bbb}{7/2*\aaa+\bbb/2}{1}
\gbox{4}{5/2*\aaa-\bbb/2}{7/2*\aaa-\bbb/2}{0} \lper{4}{5/2*\aaa-\bbb/2}
\gbox{4}{7/2*\aaa-\bbb/2}{9/2*\aaa-\bbb/2}{1}
\gbox{4}{9/2*\aaa-\bbb/2}{7/2*\aaa+\bbb/2}{2}
\end{tikzpicture}\tag{$n$}
\end{align}
$ x=1$ is a little backward compared to (\ref{b1}), so that the box $f(x)=0$ in the diagram $\langle n-1\rangle$ is a little longer and the box $f(x)=2$ newly occurs at the end of the loop due to the box $f(x-b)=0$. However it makes no change to other values afterward in the rows $f(x-a)$ and $f(x-b)$.

\begin{align}
&\begin{tikzpicture}
\node at (-1,0.22) [left] {$f(x-1)$};
\node at (-1,0.72) [left] {$f(x-b)$};
\node at (-1,1.22) [left] {$f(x-a)$};
\node at (-1,1.72) [left] {$f(x)$};
\node at (-1,2.22) [left] {$x\ $};
\cod{0}{1+a+b\qquad}
\cod{\bbb/2-\aaa/2}{\begin{tabular}{c}$nm+3a$\\$\downarrow$\end{tabular}}
\cod{\bbb-\aaa}{\quad 1+2b}
\cod{\aaa}{1+2a+b\qquad}
\cod{\bbb}{\qquad 1+a+2b}
\cod{2*\aaa}{1+3a+b\qquad}
\cod{\mmm}{\qquad 1+2a+2b}
\gbox{1}{0}{\aaa}{1}
\gbox{1}{\aaa}{\bbb}{2}
\gbox{1}{\bbb}{\aaa+\bbb}{0}
\gbox{2}{0}{\aaa}{0}
\gbox{2}{\aaa}{2*\aaa}{1}
\gbox{2}{2*\aaa}{\aaa+\bbb}{2}
\gbox{3}{0}{\bbb/2-\aaa/2}{3}
\gbox{3}{\bbb/2-\aaa/2}{\bbb-\aaa}{2} \rper{3}{\bbb-\aaa}
\gbox{3}{\bbb-\aaa}{\bbb}{0} \lper{3}{\bbb-\aaa}
\gbox{3}{\bbb}{\aaa+\bbb}{1}
\gbox{4}{0}{\aaa}{0}
\gbox{4}{\aaa}{2*\aaa}{1}
\gbox{4}{2*\aaa}{\aaa+\bbb}{2}
\end{tikzpicture}\tag{$n+1$}
\end{align}
Again we have $ f( x-1) =f( x-a)$ for $1+a+b\leq x< 1+nm +2a$.

\begin{align}
&\begin{tikzpicture}
\node at (-.6,0.22) [left] {$f(x-1)$};
\node at (-.6,0.72) [left] {$f(x-b)$};
\node at (-.6,1.22) [left] {$f(x-a)$};
\node at (-.6,1.72) [left] {$f(x)$};
\node at (-.6,2.22) [left] {$x\ $};
\cod{0}{1+nm}
\cod{\bbb-\aaa}{\begin{tabular}{c}$1+(n-1)m+2b$\qquad\qquad\qquad\qquad\\$\downarrow$\end{tabular}}
\cod{5/2*\aaa-3/2*\bbb}{2}
\cod{2*\aaa-\bbb}{\begin{tabular}{c}\qquad $1+(n-1)m+3a$\\$\downarrow$\end{tabular}}
\cod{\aaa}{1+nm+a}
\cod{\bbb}{\begin{tabular}{c}\quad$1+nm+b$\\$\downarrow$\end{tabular}}
\cod{7/2*\aaa-3/2*\bbb}{2+a\quad}
\cod{3*\aaa-\bbb}{\begin{tabular}{c}\qquad\qquad\qquad $1+(n-1)m+4a$\\$\downarrow$\end{tabular}}
\cod{5/2*\aaa-\bbb/2}{2+b\ }
\cod{2*\aaa}{\qquad\qquad\quad 1+nm+2a}
\gbox{1}{0}{\aaa}{1}
\gbox{1}{\aaa}{\bbb}{2}
\gbox{1}{\bbb}{5/2*\aaa-\bbb/2}{0}
\gbox{1}{5/2*\aaa-\bbb/2}{2*\aaa}{2}
\gbox{2}{0}{\aaa}{0}
\gbox{2}{\aaa}{2*\aaa}{1}
\gbox{3}{0}{\bbb-\aaa}{2}
\gbox{3}{\bbb-\aaa}{\bbb}{0}
\gbox{3}{\bbb}{\mmm}{1}
\gbox{4}{0}{5/2*\aaa-3/2*\bbb}{0}
\gbox{4}{5/2*\aaa-3/2*\bbb}{2*\aaa-\bbb}{2}
\gbox{4}{2*\aaa-\bbb}{\aaa}{3}
\gbox{4}{\aaa}{7/2*\aaa-3/2*\bbb}{1}
\gbox{4}{7/2*\aaa-3/2*\bbb}{3*\aaa-\bbb}{3}
\gbox{4}{3*\aaa-\bbb}{5/2*\aaa-\bbb/2}{2} \rper{4}{5/2*\aaa-\bbb/2}
\gbox{4}{5/2*\aaa-\bbb/2}{7/2*\aaa-\bbb/2}{0} \lper{4}{5/2*\aaa-\bbb/2}
\end{tikzpicture}\tag{$2n$}
\end{align}

Thus $ f(x) =\mex\{f( x-s) \mid s\in S\}$ for $ 1\leq x< 2+b$.

\newpage
\item For $B_n^3$ where $ \max( 2b-a,2a) \leq \lambda=1-a-(n-1)m < a+b$,
\begin{align}
&\begin{tikzpicture}
\node at (-.2,0.22) [left] {$f(x-1)$};
\node at (-.2,0.72) [left] {$f(x-b)$};
\node at (-.2,1.22) [left] {$f(x-a)$};
\node at (-.2,1.72) [left] {$f(x)$};
\node at (-.2,2.22) [left] {$x\ $};
\cod{0}{\begin{tabular}{c}$(n-1)m+a$\\$\downarrow$\end{tabular}}
\cod{\bbb-\aaa}{\quad(n-1)m+b}
\cod{\aaa}{(n-1)m+2a\qquad}
\cod{\bbb}{nm}
\cod{2*\aaa}{\begin{tabular}{c}$(n-1)m+3a$\\$\downarrow$\end{tabular}}
\cod{3/2*\aaa+\bbb/2}{1}
\cod{\mmm}{\qquad nm+a}
\gbox{1}{0}{\aaa}{1}
\gbox{1}{\aaa}{\bbb}{2} 
\gbox{1}{\bbb}{3/2*\aaa+\bbb/2}{0}
\gbox{1}{3/2*\aaa+\bbb/2}{\mmm}{3}
\gbox{2}{0}{\aaa}{0}
\gbox{2}{\aaa}{2*\aaa}{1}
\gbox{2}{2*\aaa}{\mmm}{2}
\gbox{3}{0}{\bbb-\aaa}{2}
\gbox{3}{\bbb-\aaa}{\bbb}{0}
\gbox{3}{\bbb}{\mmm}{1}
\gbox{4}{3/2*\aaa+\bbb/2}{5/2*\aaa+\bbb/2}{0} \lper{4}{3/2*\aaa+\bbb/2}
\end{tikzpicture}\tag{$n-1$}\\
&\begin{tikzpicture}
\node at (3/2*\aaa+\bbb/2-.3,0.22) [left] {$f(x-1)$};
\node at (3/2*\aaa+\bbb/2-.3,0.72) [left] {$f(x-b)$};
\node at (3/2*\aaa+\bbb/2-.3,1.22) [left] {$f(x-a)$};
\node at (3/2*\aaa+\bbb/2-.3,1.72) [left] {$f(x)$};
\node at (3/2*\aaa+\bbb/2-.3,2.22) [left] {$x\ $};
\cod{\mmm}{nm+a\quad}
\cod{2*\bbb}{\quad nm+b}
\cod{5/2*\aaa+\bbb/2}{1+a}
\cod{2*\aaa+\bbb}{\begin{tabular}{c}$nm+2a$\\$\downarrow$\end{tabular}}
\cod{3/2*\aaa+3/2*\bbb}{1+b\ }
\cod{\aaa+2*\bbb}{\qquad\quad (n+1)m}
\cod{7/2*\aaa+\bbb/2}{1+2a}
\cod{5/2*\aaa+3/2*\bbb}{\quad 1+a+b}
\gbox{1}{\mmm}{5/2*\aaa+\bbb/2}{1}
\gbox{1}{5/2*\aaa+\bbb/2}{3/2*\aaa+3/2*\bbb}{2} \rper{1}{3/2*\aaa+3/2*\bbb}
\gbox{1}{3/2*\aaa+3/2*\bbb}{5/2*\aaa+3/2*\bbb}{0} \lper{1}{3/2*\aaa+3/2*\bbb}
\gbox{2}{\mmm}{5/2*\aaa+\bbb/2}{0}
\gbox{2}{5/2*\aaa+\bbb/2}{2*\aaa+\bbb}{3}
\gbox{2}{2*\aaa+\bbb}{7/2*\aaa+\bbb/2}{1}
\gbox{2}{7/2*\aaa+\bbb/2}{5/2*\aaa+3/2*\bbb}{2} \rper{2}{5/2*\aaa+3/2*\bbb}
\gbox{3}{\mmm}{2*\bbb}{2}
\gbox{3}{2*\bbb}{3/2*\aaa+3/2*\bbb}{0}
\gbox{3}{3/2*\aaa+3/2*\bbb}{\aaa+2*\bbb}{3}
\gbox{3}{\aaa+2*\bbb}{5/2*\aaa+3/2*\bbb}{1}
\gbox{4}{3/2*\aaa+\bbb/2}{5/2*\aaa+\bbb/2}{0} \lper{4}{3/2*\aaa+\bbb/2}
\gbox{4}{5/2*\aaa+\bbb/2}{7/2*\aaa+\bbb/2}{1}
\gbox{4}{7/2*\aaa+\bbb/2}{5/2*\aaa+3/2*\bbb}{2}
\end{tikzpicture}\tag{$n$}
\end{align}
$x=1$ is more backward so that the change in the row $f(x)$ begins with the value $3$ (no $2$) and ends with $2$ (no $3$).

\begin{align}
&\begin{tikzpicture}
\node at (-1,0.22) [left] {$f(x-1)$};
\node at (-1,0.72) [left] {$f(x-b)$};
\node at (-1,1.22) [left] {$f(x-a)$};
\node at (-1,1.72) [left] {$f(x)$};
\node at (-1,2.22) [left] {$x\ $};
\cod{0}{1+a+b\quad}
\cod{\bbb-\aaa}{\quad 1+2b}
\cod{\aaa}{1+2a+b\qquad}
\cod{\bbb}{\qquad 1+a+2b}
\cod{2*\aaa}{1+3a+b\qquad}
\cod{\mmm}{\qquad 1+2a+2b}
\gbox{1}{0}{\aaa}{1}
\gbox{1}{\aaa}{\bbb}{2}
\gbox{1}{\bbb}{\aaa+\bbb}{0}
\gbox{2}{0}{\aaa}{0}
\gbox{2}{\aaa}{2*\aaa}{1}
\gbox{2}{2*\aaa}{\aaa+\bbb}{2}
\gbox{3}{0}{\bbb-\aaa}{2} \rper{3}{\bbb-\aaa}
\gbox{3}{\bbb-\aaa}{\bbb}{0} \lper{3}{\bbb-\aaa}
\gbox{3}{\bbb}{\aaa+\bbb}{1}
\gbox{4}{0}{\aaa}{0}
\gbox{4}{\aaa}{2*\aaa}{1}
\gbox{4}{2*\aaa}{\aaa+\bbb}{2}
\end{tikzpicture}\tag{$n+1$}
\end{align}
Once again $f( x-1) =f( x-a)$ holds for $1+a+b\leq x< 1+nm +2a$.

\begin{align}
&\begin{tikzpicture}
\node at (-.6,0.22) [left] {$f(x-1)$};
\node at (-.6,0.72) [left] {$f(x-b)$};
\node at (-.6,1.22) [left] {$f(x-a)$};
\node at (-.6,1.72) [left] {$f(x)$};
\node at (-.6,2.22) [left] {$x\ $};
\cod{0}{1+nm}
\cod{\bbb-\aaa}{\begin{tabular}{c}$1+(n-1)m+2b$\\$\downarrow$\end{tabular}}
\cod{3/2*\aaa-\bbb/2}{2}
\cod{\aaa}{\begin{tabular}{c}$1+nm+a$\\$\downarrow$\end{tabular}}
\cod{\bbb}{1+nm+b}
\cod{2*\aaa}{1+nm+2a\qquad\qquad}
\cod{3/2*\aaa+\bbb/2}{\begin{tabular}{c}$2+b$\\$\downarrow$\end{tabular}}
\cod{\mmm}{\qquad\qquad 1+(n+1)m}
\gbox{1}{0}{\aaa}{1}
\gbox{1}{\aaa}{\bbb}{2}
\gbox{1}{\bbb}{3/2*\aaa+\bbb/2}{0}
\gbox{1}{3/2*\aaa+\bbb/2}{\mmm}{2}
\gbox{2}{0}{\aaa}{0}
\gbox{2}{\aaa}{2*\aaa}{1}
\gbox{2}{\aaa}{\mmm}{1}
\gbox{3}{0}{\bbb-\aaa}{2}
\gbox{3}{\bbb-\aaa}{\bbb}{0}
\gbox{3}{\bbb}{\mmm}{1}
\gbox{4}{0}{3/2*\aaa-\bbb/2}{0}
\gbox{4}{3/2*\aaa-\bbb/2}{\aaa}{3}
\gbox{4}{\aaa}{2*\aaa}{1}
\gbox{4}{2*\aaa}{3/2*\aaa+\bbb/2}{2} \rper{4}{3/2*\aaa+\bbb/2}
\gbox{4}{3/2*\aaa+\bbb/2}{5/2*\aaa+\bbb/2}{0} \lper{4}{3/2*\aaa+\bbb/2}
\end{tikzpicture}\tag{$2n$}
\end{align}

Thus $ f(x) =\mex\{f( x-s) \mid s\in S\}$ for $ 1\leq x< 2+b$.

\end{enumerate}

\end{itemize}

\end{landscape}

\begin{rem} \label{rem:border}

Letting $m=a+b$, for $(a,b)$ in $B_n^3$ where $ \max( 2b-a,2a) \leq \lambda=1-a-(n-1)m<m$, fix $n$ and $\frac{b}{a}$ and make $\lambda\uparrow m$, then $mn+a\to1$, so
\begin{align*}
&\begin{tikzpicture}[baseline=1.65cm]
\cod{0}{0} \cod{1.5}{a} \cod{3}{2a} \cod{19/8*1.5}{m}
\cod{19/8*1.5+\repw}{nm}
\cod{19/8*1.5+\repw+4/13*\haa}{1}
\cod{19/8*1.5+\repw+8/13*\haa}{nm+a}
\cod{19/8*1.5+\repw+12/13*\haa}{1+a}
\cod{19/8*1.5+\repw+15/13*\haa}{1+b}
\gbox{1}{0}{1.5}{0} \lper{1}{0}
\gbox{1}{1.5}{3}{1} \gbox{1}{3}{19/8*1.5}{2}
\node at (19/8*1.5-0.1,1.6) [right] {$\times n$};
\gbox{1}{19/8*1.5+\repw}{19/8*1.5+\repw+4/13*\haa}{0}
\gbox{1}{19/8*1.5+\repw+4/13*\haa}{19/8*1.5+\repw+8/13*\haa}{3}
\gbox{1}{19/8*1.5+\repw+8/13*\haa}{19/8*1.5+\repw+12/13*\haa}{1}
\gbox{1}{19/8*1.5+\repw+12/13*\haa}{19/8*1.5+\repw+15/13*\haa}{2}
\rper{1}{19/8*1.5+\repw+15/13*\haa}
\end{tikzpicture}\\
\to&\begin{tikzpicture}[baseline=1.65cm]
\cod{0}{0} \cod{1.5}{a} \cod{3}{2a} \cod{19/8*1.5}{m}
\cod{19/8*1.5+\repw}{nm}
\cod{19/8*1.5+\repw+1.5}{nm+a\qquad}
\cod{19/8*1.5+\repw+3}{nm+2a\qquad}
\cod{19/8*1.5+\repw+19/8*1.5}{\qquad\qquad (n+1)m}
\gbox{1}{0}{1.5}{0} \lper{1}{0}
\gbox{1}{1.5}{3}{1}
\gbox{1}{3}{19/8*1.5}{2}
\node at (19/8*1.5-.1,1.6) [right] {$\times n$};
\gbox{1}{19/8*1.5+\repw}{19/8*1.5+\repw+1.5}{0}
\gbox{1}{19/8*1.5+\repw+1.5}{19/8*1.5+\repw+3}{1}
\gbox{1}{19/8*1.5+\repw+3}{19/8*1.5+\repw+19/8*1.5}{2}
\rper{1}{19/8*1.5+\repw+19/8*1.5}
\end{tikzpicture}\\
=&\begin{tikzpicture}[baseline=1.65cm]
\cod{0}{0} \cod{1.5}{a} \cod{3}{2a} \cod{19/8*1.5}{m}
\gbox{1}{0}{1.5}{0} \lper{1}{0}
\gbox{1}{1.5}{3}{1}
\gbox{1}{3}{19/8*1.5}{2}
\rper{1}{19/8*1.5}
\end{tikzpicture}.
\end{align*}
For $(a,b)$ in $R_n^1$ ($ n\geq 1$) where $ b-a< \lambda=1-a-nm \leq a$, make $ \lambda \downarrow b-a$, then $nm+b\to1$, so
\begin{align*}
&\begin{tikzpicture}[baseline=1.65cm]
\cod{0}{0} \cod{1.5}{a} \cod{3}{2a} \cod{19/8*1.5}{m}
\cod{19/8*1.5+\repw}{nm}
\cod{19/8*1.5+\repw+4/13*\haa}{nm+a}
\cod{19/8*1.5+\repw+8/13*\haa}{nm+2a}
\cod{19/8*1.5+\repw+11/13*\haa}{1+a}
\gbox{1}{0}{1.5}{0} \lper{1}{0}
\gbox{1}{1.5}{3}{1} \gbox{1}{3}{19/8*1.5}{2}
\node at (19/8*1.5-0.1,1.6) [right] {$\times n$};
\gbox{1}{19/8*1.5+\repw}{19/8*1.5+\repw+4/13*\haa}{0}
\gbox{1}{19/8*1.5+\repw+4/13*\haa}{19/8*1.5+\repw+8/13*\haa}{1}
\gbox{1}{19/8*1.5+\repw+8/13*\haa}{19/8*1.5+\repw+11/13*\haa}{2}
\rper{1}{19/8*1.5+\repw+11/13*\haa}
\end{tikzpicture}\\
\to&\begin{tikzpicture}[baseline=1.65cm]
\cod{0}{0} \cod{1.5}{a} \cod{3}{2a} \cod{19/8*1.5}{m}
\cod{19/8*1.5+\repw}{nm}
\cod{19/8*1.5+\repw+1.5}{nm+a\qquad}
\cod{19/8*1.5+\repw+3}{nm+2a\qquad}
\cod{19/8*1.5+\repw+19/8*1.5}{\qquad\qquad (n+1)m}
\gbox{1}{0}{1.5}{0} \lper{1}{0}
\gbox{1}{1.5}{3}{1}
\gbox{1}{3}{19/8*1.5}{2}
\node at (19/8*1.5-.1,1.6) [right] {$\times n$};
\gbox{1}{19/8*1.5+\repw}{19/8*1.5+\repw+1.5}{0}
\gbox{1}{19/8*1.5+\repw+1.5}{19/8*1.5+\repw+3}{1}
\gbox{1}{19/8*1.5+\repw+3}{19/8*1.5+\repw+19/8*1.5}{2}
\rper{1}{19/8*1.5+\repw+19/8*1.5}
\end{tikzpicture}\\
=&\begin{tikzpicture}[baseline=1.65cm]
\cod{0}{0} \cod{1.5}{a} \cod{3}{2a} \cod{19/8*1.5}{m}
\gbox{1}{0}{1.5}{0} \lper{1}{0}
\gbox{1}{1.5}{3}{1}
\gbox{1}{3}{19/8*1.5}{2}
\rper{1}{19/8*1.5}
\end{tikzpicture}.
\end{align*}
Hence the borders $ \lambda =0$ and $ \lambda =b-a$ come from considering the minimum period.

\end{rem}

\begin{rem}\label{rem:overlap}

Following Note~\ref{nt:share}. Still letting $m=a+b$, take $(a,b) =\displaystyle\left(\frac{1}{2n} ,\frac{1}{2n}\right)$ as in $B_n^1$. Then we have
\begin{equation*}
2a=m,\ nm=1,\text{ and }(n-1)m+3a=nm+a=1+a=1+b,
\end{equation*}
so the Nim value function is
\begin{align*}
&\begin{tikzpicture}[baseline=1.65cm]
\cod{0}{0} \cod{1.5}{a} \cod{3}{2a} \cod{19/8*1.5}{m}
\cod{19/8*1.5+\repw}{nm}
\cod{19/8*1.5+\repw+4/13*\haa}{1}
\cod{19/8*1.5+\repw+5/13*\haa}{\begin{tabular}{c}$(n-1)m+3a$\\$\downarrow$\end{tabular}}
\cod{19/8*1.5+\repw+8/13*\haa}{nm+a}
\cod{19/8*1.5+\repw+12/13*\haa}{1+a}
\cod{19/8*1.5+\repw+15/13*\haa}{1+b}
\gbox{1}{0}{1.5}{0} \lper{1}{0}
\gbox{1}{1.5}{3}{1} \gbox{1}{3}{19/8*1.5}{2}
\node at (19/8*1.5-0.1,1.6) [right] {$\times n$};
\gbox{1}{19/8*1.5+\repw}{19/8*1.5+\repw+4/13*\haa}{0}
\gbox{1}{19/8*1.5+\repw+4/13*\haa}{19/8*1.5+\repw+5/13*\haa}{2}
\gbox{1}{19/8*1.5+\repw+5/13*\haa}{19/8*1.5+\repw+8/13*\haa}{3}
\gbox{1}{19/8*1.5+\repw+8/13*\haa}{19/8*1.5+\repw+12/13*\haa}{1}
\gbox{1}{19/8*1.5+\repw+12/13*\haa}{19/8*1.5+\repw+15/13*\haa}{3} \rper{1}{19/8*1.5+\repw+15/13*\haa}
\end{tikzpicture}\\
=&\begin{tikzpicture}[baseline=1.65cm]
\cod{0}{0} \cod{1.5}{a} \cod{3}{2a} 
\cod{3+\repw}{1}
\cod{3+\repw+1.5}{1+a}
\gbox{1}{0}{1.5}{0} \lper{1}{0}
\gbox{1}{1.5}{3}{1}
\node at (3-.1,1.6) [right] {$\times n$};
\gbox{1}{3+\repw}{3+\repw+1.5}{2} \rper{1}{3+\repw+1.5}
\end{tikzpicture}.
\end{align*}
Take $ ( a,b) =\displaystyle\left(\frac{1}{2n} ,\frac{1}{2n}\right)$ as in $R_{n-1}^2$, then we have
\begin{equation*}
2a=m,\ (n-1)m=1-2a,\text{ and }(n-1)m+3a=1+a,
\end{equation*}
so the Nim value function is
\begin{align*}
&\begin{tikzpicture}[baseline=1.65cm]
\cod{0}{0} \cod{1.5}{a} \cod{3}{2a} \cod{19/8*1.5}{m}
\cod{19/8*1.5+2*\repw}{(n-1)m\qquad}
\cod{19/8*1.5+2*\repw+4/14*\haa}{(n-1)m+a\qquad\quad\ }
\cod{19/8*1.5+2*\repw+8/14*\haa}{(n-1)m+2a\qquad\ }
\cod{19/8*1.5+2*\repw+12/14*\haa}{(n-1)m+3a\quad\ }
\cod{19/8*1.5+2*\repw+14/14*\haa}{\qquad\  1+a}
\gbox{1}{0}{1.5}{0} \lper{1}{0}
\gbox{1}{1.5}{3}{1} \gbox{1}{3}{19/8*1.5}{2}
\node at (19/8*1.5-0.1,1.6) [right] {$\times (n-1)$};
\gbox{1}{19/8*1.5+2*\repw}{19/8*1.5+2*\repw+4/14*\haa}{0}
\gbox{1}{19/8*1.5+2*\repw+4/14*\haa}{19/8*1.5+2*\repw+8/14*\haa}{1}
\gbox{1}{19/8*1.5+2*\repw+8/14*\haa}{19/8*1.5+2*\repw+12/14*\haa}{2}
\gbox{1}{19/8*1.5+2*\repw+12/14*\haa}{19/8*1.5+2*\repw+14/14*\haa}{3} \rper{1}{19/8*1.5+2*\repw+14/14*\haa}
\end{tikzpicture}\\
=&\begin{tikzpicture}[baseline=1.65cm]
\cod{0}{0} \cod{1.5}{a} \cod{3}{2a}
\cod{3+2*\repw}{1-2a}
\cod{3+2*\repw+1.5}{1-a}
\cod{3+2*\repw+3}{1}
\cod{3+2*\repw+4.5}{1+a}
\gbox{1}{0}{1.5}{0} \lper{1}{0}
\gbox{1}{1.5}{3}{1}
\node at (3-0.1,1.6) [right] {$\times (n-1)$};
\gbox{1}{3+2*\repw}{3+2*\repw+1.5}{0}
\gbox{1}{3+2*\repw+1.5}{3+2*\repw+3}{1}
\gbox{1}{3+2*\repw+3}{3+2*\repw+4.5}{2}
\rper{1}{3+2*\repw+4.5}
\end{tikzpicture}\\
=&\begin{tikzpicture}[baseline=1.65cm]
\cod{0}{0} \cod{1.5}{a} \cod{3}{2a} 
\cod{3+\repw}{1}
\cod{3+\repw+1.5}{1+a}
\gbox{1}{0}{1.5}{0} \lper{1}{0}
\gbox{1}{1.5}{3}{1}
\node at (3-.1,1.6) [right] {$\times n$};
\gbox{1}{3+\repw}{3+\repw+1.5}{2} \rper{1}{3+\repw+1.5}
\end{tikzpicture},
\end{align*}
the same as above.

\end{rem}

\end{appendices}

\end{document}